\newcolumntype{L}[1]{>{\raggedright\let\newline\\\arraybackslash\hspace{0pt}}m{#1}}
\newcolumntype{C}[1]{>{\centering\let\newline\\\arraybackslash\hspace{0pt}}m{#1}}
\newcolumntype{R}[1]{>{\raggedleft\let\newline\\\arraybackslash\hspace{0pt}}m{#1}}
\newtheorem{theorem}{Theorem}
\newtheorem{lemma}[theorem]{Lemma}
\newtheorem{proposition}[theorem]{Proposition}
\newtheorem{corollary}[theorem]{Corollary}
\theoremstyle{definition}
\newtheorem{definition}[theorem]{Definition}
\newtheorem{example}[theorem]{Example}
\newtheorem{remark}[theorem]{Remark}
\newtheorem{construction}[theorem]{Construction}
\numberwithin{equation}{section}
\definecolor{cKlaus}{rgb}{0.1,0.55,0.03}
\definecolor{cKlausOK}{rgb}{0.6,0.10,0.33}
\definecolor{intOrange}{rgb}{1.0,.310,.0}
\newcommand{\cN}{\mathcal{N}}
\newcommand{\Cstar}{\mathbb{C}^*}
\newcommand{\bC}{\mathbb{C}}
\newcommand{\bR}{\mathbb{R}}
\newcommand{\bZ}{\mathbb{Z}}
\DeclareMathOperator{\rank}{rk}
\DeclareMathOperator{\tail}{tail}
\DeclareMathOperator{\Hom}{Hom}
\DeclareMathOperator{\sheafhom}{\mathscr{H}\text{\kern -3pt {\calligra\large om}}\,}
\DeclareMathOperator{\sheafext}{\mathscr{E}\text{\kern -3pt {\calligra\large xt}}\;}
\DeclareMathOperator{\orb}{orb}
\DeclareMathOperator{\Coh}{Coh}
\DeclareMathOperator{\Spec}{Spec}
\DeclareMathOperator{\id}{id}
\DeclareMathOperator{\pr}{pr}
\newcommand{\wt}{\widetilde}
\DeclareMathOperator{\vertset}{vert}
\DeclareMathOperator{\cone}{cone}
\newcommand{\idm}{\mathfrak{m}}
\newcommand{\PP}{\mathbb P}
\newcommand{\F}{\mathbb F}
\newcommand{\N}{\mathbb N}
\newcommand{\R}{\mathbb R}
\newcommand{\T}{\mathbb T}
\newcommand{\V}{\mathbb V}
\newcommand{\Z}{\mathbb Z}
\newcommand{\C}{\mathbb C}
\newcommand{\CC}{{\mathcal C}}
\newcommand{\CE}{{\mathcal E}}
\newcommand{\CF}{{\mathcal F}}
\newcommand{\CG}{{\mathcal G}}
\newcommand{\CH}{{\mathcal H}}
\newcommand{\CL}{{\mathcal L}}
\newcommand{\CO}{{\mathcal O}}
\newcommand{\CS}{{\mathcal S}}
\newcommand{\CT}{{\mathcal T}}
\newcommand{\CU}{{\mathcal U}}
\definecolor{intOrange}{rgb}{1.0,.310,.0}
\renewcommand{\iff}{\Leftrightarrow}
\newcommand{\gExt}{\mbox{\rm Ext}}
\renewcommand{\div}{\operatorname{div}}
\newcommand{\largestint}[1]{\lfloor #1 \rfloor}
\newcommand{\rounddown}[1]{\largestint{#1}}
\newcommand{\spann}{\operatorname{span}}
\newcommand{\kst}{\,|\;}
\newcommand{\kss}{\scriptscriptstyle}
\newcommand{\kbb}{{\kss \bullet}}
\newcommand{\ko}{\overline}
\newcommand{\rato}{-\hspace{-0.3em}\to}
\newcommand{\Pol}{\operatorname{Pol}}
\newcommand{\dual}{^{\scriptscriptstyle\vee}}
\newcommand{\normal}{\cN}
\newcommand{\koszulindex}{p}
\newcommand{\rankM}{r}
\newcommand{\toric}{\T\V}  
\newcommand{\gH}{\operatorname{H}}
\newcommand{\tH}{\wt{\operatorname{H}}}
\newcommand{\Pic}{\operatorname{Pic}}
\newcommand{\Cl}{\operatorname{Cl}}
\definecolor{skin}{HTML}{FFECC9}
\definecolor{pumpkin}{HTML}{FEDFA9}
\definecolor{piggy}{HTML}{FFB99D}
\definecolor{fiolet}{HTML}{CD8F9C}
\definecolor{granat}{HTML}{677081}
\definecolor{ciemnyblekit}{HTML}{91A1B8}
\definecolor{oliwkowy}{HTML}{627037}
\definecolor{ciemnazielen}{HTML}{394D2E}
\definecolor{ciemnyfiolet}{HTML}{424444}
\definecolor{mocnyfiolet}{HTML}{717299}
\definecolor{jasnyfiolet}{HTML}{B0ABCC}
\definecolor{bladyfiolet}{HTML}{C9C7DB}
\definecolor{lightblue}{RGB}{135,206,250}
\definecolor{darkblue}{RGB}{0,0,160}
\definecolor{darkgreen}{RGB}{0,160,0}
\def\thickhline{
	\noalign{\ifnum0=`}\fi\hrule \@height \thickarrayrulewidth \futurelet
	\reserved@a\@xthickhline}
\def\@xthickhline{\ifx\reserved@a\thickhline
	\vskip\doublerulesep
	\vskip-\thickarrayrulewidth
	\fi
	\ifnum0=`{\fi}}
\newlength{\thickarrayrulewidth}
\newcommand{\doublewidetilde}[1]{{
		\mathpalette\double@widetilde{#1}
}}
\newcommand{\double@widetilde}[2]{
	\sbox\z@{$\m@th#1\widetilde{#2}$}
	\ht\z@=.9\ht\z@
	\widetilde{\box\z@}
}
\begin{document}

\title[Extensions of toric line bundles]
{Extensions of toric line bundles}

\author[K.~Altmann]{Klaus Altmann
}
\address{Institut f\"ur Mathematik,
FU Berlin,
K\"onigin-Luise-Str.~24-26,
14195 Berlin,
Germany}
\email{altmann@math.fu-berlin.de}
\author[A.~Flatt]{Amelie Flatt
}
\address{Institut f\"ur Mathematik,
HU Berlin,
Rudower Chaussee 25,
12489 Berlin,
Germany}
\email{amelie.flatt@hu-berlin.de}
\author[L.~Hille]{Lutz Hille
}
\address{Mathematisches Institut der
Universit\"at M\"unster,
Einsteinstr.~62,
48149 M\"unster,
Germany}
\email{lutz.hille@uni-muenster.de}

\begin{abstract}
For any two nef line bundles $\CL^+ \coloneqq \CO_X(\Delta^+)$ and 
$\CL^- \coloneqq \CO_X(\Delta^-)$ on a toric variety X
represented by lattice polyhedra $\Delta^+$ respectively $\Delta^-$,
we present the universal equivariant extension of 
$\CL^-$ by $\CL^+$
under use of the connected components of the set theoretic difference
$\Delta^-\setminus\Delta^+$.
\end{abstract}

\maketitle
\setcounter{tocdepth}{1}

\section{Introduction}
\label{section:introduction}

\subsection{Spotting Cohomology}
\label{subsection:spottingCohomology}
Consider a projective toric variety $X = \PP(\Delta)$ corresponding to a lattice polytope \mbox{$\Delta \subseteq M_\bR$},  $M_\bR = M \otimes_\bZ \bR$ for a lattice $M$,
over $\C$.
A torus invariant Cartier divisor $D$ on $X$ can be represented
by a pair $(\Delta^+,\Delta^-)$ of lattice polytopes, where both 
polytopes encode nef divisors $D_{\Delta^+}$ and $D_{\Delta^-}$ on $X$ and $D = D_{\Delta^+} - D_{\Delta^-}$.
Denote the associated line bundle by $\CO_X(D) \eqqcolon \CO_X(\Delta^+ - \Delta^-)$.\\
It is well-known that the cohomology groups $\gH^i\big(X,\CO_X(D)\big)$ of the line bundle $\CO_X(D)$ of a torus invariant Cartier divisor $D$ are $M$-graded (compare for example \cite[§9.1]{CoxBook} or \cite[Section 3.5]{fultonToric}):
\begin{equation}
\gH^i\big(X,\CO_X(D)\big) = \bigoplus_{m \in M} \gH^i\big(X,\CO_X(D)\big)_m.
\end{equation}
By \cite[Thm. III.6]{immaculate} 
or \cite{dop} we can describe
their homogeneous component of degree $m\in M$ by
\begin{equation}
\gH^i\big(X,\,\CO_X(\Delta^+-\Delta^-)\big)_m = 
\tH^{i-1}\big(\Delta^- \setminus (\Delta^+-m),\,\C\big),
\end{equation}
where $\tH^{i-1}(Z, \C)$ on the right hand side denotes the reduced singular cohomology of a topological space $Z$ with complex coefficients, $(\Delta^+ - m)$ denotes the polytope $\Delta^+$ shifted by the lattice point $-m \in M$ and $\Delta^- \setminus (\Delta^+ - m)$ denotes the set-theoretic difference of the two polytopes.
Recall the $(-1)$-st reduced singular cohomology:
\begin{equation}
\tH^{-1}(Z,\C)=\left\{
\begin{array}{ll}
\C, &\mbox{if } Z= \emptyset,\\
0,&\mbox{otherwise}.
\end{array}\right.
\end{equation}
The $0$-th reduced cohomology is the quotient $\tH^0(Z, \C) = \gH^0(Z, \bC) \big/ \gH^0(\{\cdot\}, \bC)$, and thus, its dimension is the number of connected components of $Z$ minus 1.

\subsection{The Example {$\F_1$}}
\label{subsection:exampleF1}
We look at the first Hirzebruch surface {$\F_1$} as a first example.
Consider the projections $p_1 \colon\F_1\to\PP^1$ as a ruled surface 
and \mbox{$p_2\colon\F_1\to\PP^2$} as a blow-up. 
We use \mbox{$\CO_{\F_1}(1,0):=p_1^*\,\CO_{\PP^1}(1)$}
and \mbox{$\CO_{\F_1}(0,1):=p_2^*\,\CO_{\PP^2}(1)$}
as a basis for \mbox{$\Pic\F_1=\Z^2$}.
For $i,j\geq 0$ the sheaves $\CO_{\F_1}(i,j)$
correspond to  lattice polytopes $\Delta_{(i,j)}$ in $\bR^2$. For example, for
$\CO_{\F_1}(0,0)$, $\CO_{\F_1}(1,0)$,
$\CO_{\F_1}(0,1)$, $\CO_{\F_1}(0,2)$, 
and the ample $\CO_{\F_1}(1,1)$, the lattice polytopes look as follows:
$$
\newcommand{\scaleA}{0.5}
\newcommand{\spaceA}{\hspace*{1em}}
\begin{tikzpicture}[scale=\scaleA]
\draw[color=oliwkowy!40] (-0.3,-0.3) grid (0.3,0.3);
\fill[thick, color=red]
(0,0) circle (3pt);
\draw[thick,  color=black]
(0,-1) node{$\Delta_{(0,0)}$};
\end{tikzpicture}
\spaceA
\begin{tikzpicture}[scale=\scaleA]
\draw[color=oliwkowy!40] (-0.3,-0.3) grid (1.3,0.3);
\draw[thick, color=black]
(0,0) -- (1,0) -- (0,0);
\fill[thick, color=black]
(0,0) circle (2pt) (1,0) circle (2pt);
\draw[thick, color=red]
(0,0) circle (3pt);
\draw[thick,  color=black]
(0.5,-1) node{$\Delta_{(1,0)}$};
\end{tikzpicture}
\spaceA
\begin{tikzpicture}[scale=\scaleA]
\draw[color=oliwkowy!40] (-0.3,-0.3) grid (1.3,1.3);
\draw[thick, color=black]
(0,0) -- (1,0) -- (0,1) -- (0,0);
\fill[pattern color=blue!50, pattern=north east lines]
(0,0) -- (1,0) -- (0,1) -- (0,0);
\fill[thick, color=black]
(0,0) circle (2pt) (1,0) circle (2pt) (0,1) circle (2pt);
\draw[thick, color=red]
(0,0) circle (3pt);
\draw[thick,  color=black]
(0.5,-1) node{$\Delta_{(0,1)}$};
\end{tikzpicture}
\spaceA
\begin{tikzpicture}[scale=\scaleA]
\draw[color=oliwkowy!40] (-0.3,-0.3) grid (2.3,2.3);
\draw[thick, color=black]
(0,0) -- (2,0) -- (0,2) -- (0,0);
\fill[pattern color=yellow!50, pattern=north east lines]
(0,0) -- (2,0) -- (0,2) -- (0,0);
\fill[thick, color=black]
(0,0) circle (2pt) (1,0) circle (2pt) (2,0) circle (2pt)
(0,1) circle (2pt) (1,1) circle (2pt) (0,2) circle (2pt);
\draw[thick, color=red]
(0,0) circle (3pt);
\draw[thick,  color=black]
(1,-1) node{$\Delta_{(0,2)}$};
\end{tikzpicture}
\spaceA
\begin{tikzpicture}[scale=\scaleA]
\draw[color=oliwkowy!40] (-0.3,-0.3) grid (2.3,1.3);
\draw[thick, color=black]
(0,0) -- (2,0) -- (1,1) -- (0,1) -- (0,0);
\fill[pattern color=green!50, pattern=north west lines]
(0,0) -- (2,0) -- (1,1) -- (0,1) -- (0,0);
\fill[thick, color=black]
(0,0) circle (2pt) (1,0) circle (2pt) (2,0) circle (2pt)
(0,1) circle (2pt) (1,1) circle (2pt);
\draw[thick, color=red]
(0,0) circle (3pt);
\draw[thick,  color=black]
(1,-1) node{$\Delta_{(1,1)}$};
\end{tikzpicture}
$$
The red dot indicates the position of the origin in each figure, fixing the exact position of each polytope within the plane. 
We see an example of the result quoted in
subsection~(\ref{subsection:spottingCohomology}).
Consider the polytope $\Delta_{(0,2)}$ shaded in yellow and the polytope $\Delta_{(1,0)} + (0,1)$, that is, $\Delta_{(1,0)}$ shifted by $(0,1) \in \bZ^2$, depicted in \color{orange} orange\color{black}:
\begin{center}
	\newcommand{\scaleA}{0.5}
	\begin{tikzpicture}[scale=\scaleA]
	\draw[color=oliwkowy!40] (-0.3,-0.3) grid (2.3,2.3);
	\draw[thick, color=black]
	(0,0) -- (2,0) -- (0,2) -- (0,0);
	\fill[pattern color=yellow!50, pattern=north east lines]
	(0,0) -- (2,0) -- (0,2) -- (0,0);
	\fill[thick, color=black]
	(0,0) circle (2pt) (1,0) circle (2pt) (2,0) circle (2pt)
	(0,1) circle (2pt) (1,1) circle (2pt) (0,2) circle (2pt);
	\fill[thick, color=orange]
	(0,1) circle (2pt) (1,1) circle (2pt);
	\draw[thick, color=red]
	(0,0) circle (3pt);
	\draw[very thick, color=orange]
	(0,1) -- (1,1);
	\end{tikzpicture}
\end{center}
The two connected components of the set-theoretic difference $\Delta_{(0,2)} \setminus \big(\Delta_{(1,0)} + (0,1)\big)$ provide a one-dimensional $0$-th reduced singular cohomology
\begin{equation}
\tH^0\big(\Delta_{(0,2)} \setminus (\Delta_{(1,0)} + (0,1))\big),
\end{equation}
and so by \cite{immaculate} and \cite{dop} a one-dimensional piece (in fact, the only one) of 
\begin{equation}
\gH^1\big(\F_1,\;\CO_{\F_1}(\Delta_{(1,0)}-\Delta_{(0,2)})\big)=
\gH^1\big(\F_1,\;\CO_{\F_1}(1,-2)\big),
\end{equation}
sitting in degree $m=-(0,1) \in \bZ^2$.\\[1ex]
In this paper we will take another point of view.
The partition of $\Delta_{(0,2)}$ into two connected components induces the following
``exact sequences of polytopes'':
\begin{equation}\label{equation:exactSequenceOfPolytopes}
\newcommand{\scaleA}{0.5}
\newcommand{\spaceA}{\hspace*{1em}}
0 \hspace{0.5em} \to
\spaceA
\begin{tikzpicture}[scale=\scaleA]
\draw[color=oliwkowy!40] (-0.3,-0.3) grid (1.3,0.3);
\draw[thick, color=black]
(0,0) -- (1,0) -- (0,0);
\fill[thick, color=black]
(0,0) circle (2pt) (1,0) circle (2pt);
\draw[thick, color=red]
(0,0) circle (3pt);
\end{tikzpicture}
\spaceA
\to
\spaceA
\raisebox{-0.5em}{\fbox{$
		\begin{tikzpicture}[scale=\scaleA]
		\draw[color=oliwkowy!40] (-0.3,-0.3) grid (1.3,1.3);
		\draw[thick, color=black]
		(0,0) -- (1,0) -- (0,1) -- (0,0);
		\fill[pattern color=blue!50, pattern=north east lines]
		(0,0) -- (1,0) -- (0,1) -- (0,0);
		\fill[thick, color=black]
		(0,0) circle (2pt) (1,0) circle (2pt) (0,1) circle (2pt);
		\draw[thick, color=red]
		(0,0) circle (3pt);
		\end{tikzpicture}
		\raisebox{0.5em}{$\oplus$}
		\begin{tikzpicture}[scale=\scaleA]
		\draw[color=oliwkowy!40] (-0.3,-0.3) grid (2.3,1.3);
		\draw[thick, color=black]
		(0,0) -- (2,0) -- (1,1) -- (0,1) -- (0,0);
		\fill[pattern color=green!50, pattern=north west lines]
		(0,0) -- (2,0) -- (1,1) -- (0,1) -- (0,0);
		\fill[thick, color=black]
		(0,0) circle (2pt) (1,0) circle (2pt) (2,0) circle (2pt)
		(0,1) circle (2pt) (1,1) circle (2pt);
		\draw[thick, color=red]
		(0,1) circle (3pt);
		\end{tikzpicture}
		$}}
\spaceA
\to
\spaceA
\raisebox{-1em}{\begin{tikzpicture}[scale=\scaleA]
	\draw[color=oliwkowy!40] (-0.3,-0.3) grid (2.3,2.3);
	\draw[thick, color=black]
	(0,0) -- (2,0) -- (0,2) -- (0,0);
	\fill[pattern color=yellow!50, pattern=north east lines]
	(0,0) -- (2,0) -- (0,2) -- (0,0);
	\fill[thick, color=black]
	(0,0) circle (2pt) (1,0) circle (2pt) (2,0) circle (2pt)
	(0,1) circle (2pt) (1,1) circle (2pt) (0,2) circle (2pt);
	\draw[thick, color=red]
	(0,1) circle (3pt);
	\draw[thick, color=orange]
	(0,1) -- (1,1);
	\end{tikzpicture}}
\spaceA
\to \hspace{0.5em} 0.
\end{equation}
The two polytopes in the middle cover the polytope $\Delta_{(0,2)} - (0,1)$ and intersect in the polytope $\Delta_{(1,0)}$, hence they give an inclusion/exclusion sequence of polytopes.
In section~\ref{section:polyhedraToSheaves} we show that this 
corresponds to an exact sequence of sheaves
\begin{equation}\label{equation:exactSequeceInducedBySequenceOfPolytopes}
0 \to \CO_{\F_1}(1,0) \to \CO_{\F_1}(0,1) \oplus \CO_{\F_1}(1,1) \to \CO_{\F_1}(0,2) \to 0.
\end{equation}
We obtain an extension
of $\CO_{\F_1}(0,2)$ by $\CO_{\F_1}(1,0)$, that is, an element of the group 
\begin{align}
\gExt^1(\CO_{\F_1}(0,2), \CO_{\F_1}(1,0))
&= \gH^1\big(\F_1,\;\CO_{\F_1}(1,-2)\big) \\
&= \gH^1\big(\F_1,\;\CO_{\F_1}(\Delta_{(1,0)}-\Delta_{(0,2)}\big),
\end{align}
which we know to be one-dimensional by \cite{immaculate} and \cite{dop}.
We will show that the short exact extension sequence~(\ref{equation:exactSequeceInducedBySequenceOfPolytopes}) induced by the ``exact sequences of polytopes''~(\ref{equation:exactSequenceOfPolytopes}) represents this one-dimensional vector space
\mbox{$\gExt^1\big(\CO_{\F_1}(0,2),\CO_{\F_1}(1,0)\big)$} and, moreover, that this
concept works in general.

\subsection{Acknowledgements}
\label{subsection:acknowledgements}
We would like to thank Christian Haase for helpful discussions throughout the work on this paper, in particular for the triangulation argument in the proof of Proposition 13.
{Moreover, we thank the anonymous referee for the careful reading 
and many valuable hints and remarks.
The second author was supported by the BMS ("Berlin Mathematical School")
and the third one by the
DFG and Exzellenzcluster "Mathematik M\"unster".
\footnote{Gef\"ordert durch die Deutsche Forschungsgemeinschaft (DFG) im
Rahmen der Exzellenzstrategie des Bundes und der Länder EXC 2044
–390685587, Mathematik M\"unster: Dynamik–Geometrie–Struktur}
}

\section{Toric Geometry}
\label{section:toricGeometry}
We introduce some basics of toric geometry central to this paper. Readers not familiar with toric geometry can take a look at one of the numerous introductory texts, for example \cite{CoxBook}, \cite{fultonToric}, \cite{Danilov_1978}, or \cite{Demazure1970}.\\[1ex]
Let $M \cong \bZ^\rankM$ be a lattice and $N = \Hom_\bZ(M, \bZ) \cong \bZ^\rankM$ its dual lattice. There is a natural pairing
$\langle \cdot , \cdot \rangle \colon M \times N \to \bZ.$
We consider the algebraic torus \mbox{$T = \Spec \C[M]$}. 
The isomorphism $M \cong \bZ^\rankM$ induces an isomorphism $T \cong (\Cstar)^\rankM$. 
The lattice $M$ can be recovered as the \emph{character lattice} $\Hom(T, \Cstar)$. 
We denote the character of \mbox{$M \ni m \mapsto (a_1, \dots, a_r) \in \bZ^\rankM$} by 
\mbox{$\chi^m \colon T \to \Cstar, \, (t_1, \dots, t_r) \mapsto t_1^{a_1}\cdots t_r^{a_r}.$}
The dual lattice $N$ corresponds to the \emph{group of $1$-parameter subgroups} $\Hom(\Cstar, T)$. Here $N \ni n \mapsto (b_1, \dots, b_r) \in \bZ^\rankM$ corresponds to 
\mbox{$\lambda^n  \colon \Cstar \to T, \, t \mapsto (t^{b_1}, \dots, t^{b_r}).$}
A \emph{toric variety} is an irreducible variety containing an algebraic torus $T \cong (\Cstar)^\rankM$ as an open dense subset, such that the action of the torus on itself by multiplication extends to an algebraic action on the whole variety \cite[Def. 3.1.1]{CoxBook}. 
We sketch how \emph{normal toric varieties} can be constructed from cones and fans in $N_\bR = N \otimes_\bZ \bR$.\\[1ex]
By a \emph{cone} in $N_\bR$ we mean a convex subset 
\mbox{$\sigma =  \cone(S) = \{ \sum_{i = 1}^{k} \lambda_i v_i \kst \lambda_i \ge 0\}$}
generated by a finite set $S = \{v_1, \dots, v_k\} \subseteq N$.
The \emph{dual cone} to $\sigma$ in $M_\bR$ is
\mbox{$\sigma^\vee = \{u \in M_\bR \kst \langle u, v \rangle \ge 0 \text{ for all } v \in \sigma\}.$}
A cone $\sigma$ is \emph{pointed} if $\sigma \cap (-\sigma) = \{0\}$. We write $\tau \preceq \sigma$ whenever $\tau$ is a \emph{face} of $\sigma$.
A pointed cone $\sigma \subseteq N_\bR$ leads to an \emph{affine toric variety}
$\toric(\sigma) \coloneqq \Spec \C[\sigma^\vee \cap M]$.
The inclusion of a face $\tau \preceq \sigma$ induces an open embedding $\toric(\tau) \hookrightarrow \toric(\sigma)$. In particular, the inclusion of the origin induces an open embedding of the torus \mbox{$T = \Spec \C[M] \hookrightarrow \toric(\sigma)$}.\\[1ex]
A \emph{fan} $\Sigma$ in $N_\bR$ is a finite collection of pointed cones that is closed under taking faces and such that the intersection of two cones is a face of each.
The affine toric varieties $U_\sigma \coloneqq \toric(\sigma)$ associated to the cones $\sigma$ in a fan $\Sigma$ glue together to a \emph{normal separated} toric variety $\toric(\Sigma)$ with open affine charts the $U_\sigma$ \cite[Thm. 3.1.5]{CoxBook}.\\[1ex]
A \emph{polyhedron} $\Delta$ in $M_\bR$ is the intersection of finitely many closed half spaces
\begin{equation}
\Delta = \{m \in M_\bR \kst \langle m, v_i \rangle \ge -\lambda_i, \, i = 1, \dots, s \} = \bigcap_{i = 1}^s H_i^+,
\end{equation} 
where $H_i^+ = \{m \in M_\bR \kst \langle m, v_i \rangle \ge -\lambda_i\}$ for some inward pointing normal vector \mbox{$v_i \in N_\bR$} and some scalar $\lambda_i \in \bR$ for $i \in \{1, \dots, s\}$. 
A compact polyhedron is called a \emph{polytope}. A polyhedron $\Delta$ can be written as the \emph{Minkowski sum} $\Delta = \nabla + \delta$ of a polytope $\nabla$ and its \emph{tail cone} 
\mbox{$\delta = \tail(\Delta) = \{m \in M_\bR \kst u + m \in \Delta \text{ for all } u \in \Delta\} \subseteq M_\bR.$}
To a full-dimensional lattice polyhedron $\Delta$ we associate its \emph{inner normal fan} 
\begin{equation}
\normal(\Delta) \coloneqq \{\tau \kst \tau \preceq \sigma_m, \, m \in \vertset(\Delta)\} \text{ with } |\normal(\Delta)|= \tail(\Delta)^\vee,
\end{equation}
whose maximal cones $\sigma_m$ are given by
$\sigma_m \coloneqq (\cone(\Delta - m))^\vee = \bR_{\ge 0} \cdot (\Delta - m)^\vee \subseteq N_\bR.$
This yields a semiprojective toric variety $\PP(\Delta) \coloneqq \toric(\normal(\Delta))$. 
We will assume all polyhedra to have at least one vertex. The normal fan of such a polyhedron will have convex support of full dimension.

\subsection{Divisors on Toric Varieties}
\label{subsection:divisorsOnToricVarieties}
Let $\Sigma$ be a fan in $N_\bR$ with convex support of full dimension $\rankM = \dim N_\bR$, for example, the normal fan of a full-dimensional lattice polyhedron in $M_\bR$. Let $X \coloneqq \toric(\Sigma)$ be the toric variety given by $\Sigma$.\\[1ex]
Every lattice polyhedron $\Delta \subseteq M_\bR$ with tail cone $|\Sigma|^\vee$ and whose normal fan $\normal(\Delta)$ is refined by $\Sigma$ gives rise to a nef Cartier divisor $D_\Delta$ on $X$ by the following construction: 
If $\Sigma$ refines $\normal(\Delta)$ then the function 
$|\Sigma| \to \bR, \, v \mapsto \min \langle \Delta, v \rangle$ 
is linear on the cones of $\Sigma$. 
So for each $\sigma \in \Sigma$ there is some $m_\sigma \in \Delta \cap M$ such that $\min \langle \Delta, v \rangle = \langle m_\sigma, v \rangle$ for each $v \in \sigma$. 
The lattice point $m_\sigma$ is a vertex of $\Delta$ and if $\sigma \in \Sigma(\rankM)$ is a maximal cone, then $m_\sigma$ is uniquely determined. Note that
$\Delta + \sigma^\vee = m_\sigma + \sigma^\vee.$
Let $D_\Delta$ be the Cartier divisor with Cartier data $\{m_\sigma\}_{\sigma \in \Sigma}$, that is, locally on $U_\sigma \subseteq X$ given by $D_{|U_\sigma} = \div(\chi^{-m_\sigma}) _{|U_\sigma}$.
Then for the associated line bundle $\CO_X(\Delta) \coloneqq \CO_X(D_\Delta)$:
\begin{equation}
\Gamma(U_\sigma, \CO_{X}(\Delta)) = \bigoplus_{m \in (\Delta + \sigma^\vee) \cap M} \C \cdot \chi^m = \chi^{m_\sigma} \cdot \C[\sigma^\vee \cap M] = \C[m_\sigma + (\sigma^\vee \cap M)],
\end{equation}
 so the vertex $m_\sigma$ of $\Delta$ encodes the local sections of the line bundle $\CO_{X}(\Delta)$ over the affine open $U_\sigma$. The line bundle $\CO_X(\Delta)$ is \emph{ample} if and only if $\normal(\Delta) = \Sigma$.\\[1ex]
We denote the set of lattice polyhedra with prescribed tail cone $\delta$ by $\Pol_\delta^+$
 and the set of lattice polyhedra \emph{compatible} with $\Sigma$, that is, with tail cone $|\Sigma|^\vee$ and whose normal fan is refined by $\Sigma$, by $\Pol^+(\Sigma)$. 
 The set $\Pol_\delta^+$ forms a semigroup with respect to Minkowski addition. For a fan $\Sigma$ the set $\Pol^+(\Sigma)$ forms a finitely generated subsemigroup. The union over the $\Pol^+(\Sigma)$ with $|\Sigma|^\vee = \delta$ is the semigroup $\Pol_\delta^+$.
These semigroups are cancellative,
{%
which is caused by the presence of the prescribed common tail cone $\delta$:
It makes sure that a polyhedron $\Delta\in\Pol_\delta^+$
is uniquely determined by the values of
$\min\langle\Delta,v\rangle$ with $v$ running through $\delta\dual$.
Hence, these semigroups embed%
}
into their respective Grothendieck groups of formal differences:
 \begin{align}
 &\Pol_\delta^+ \hookrightarrow \Pol_\delta = \{\Delta^+ - \Delta^- \kst \Delta^+, \Delta^- \in \Pol_\delta^+\},\\
 &\Pol^+(\Sigma) \hookrightarrow \Pol(\Sigma) = \{\Delta^+ - \Delta^- \kst \Delta^+, \Delta^- \in \Pol^+(\Sigma)\}.
 \end{align}
 On a quasiprojective toric variety $X = \toric(\Sigma)$, every Cartier divisor $D$ can be written (non-uniquely) as a difference
 $D = D^+ - D^-$
 with both $D^+$ and $D^-$ nef Cartier divisors \cite[Thm. 6.3.22]{CoxBook}. 
 In particular every Cartier divisor on $X$ can be represented by a pair of lattice polyhedra $(\Delta^+, \Delta^-)$ compatible with the fan $\Sigma$, that is, an element of the Grothendieck group $\Pol(\Sigma)$.
 
\section{From Complexes of Polyhedra To Complexes of Sheaves}
\label{section:polyhedraToSheaves}
\subsection{The Koszul Complex of Polyhedra}
\label{subsection:koszulComplexOfPolyhedra}
In this section we construct ``exact sequences of polyhedra" that induce exact sequences of split vector bundles on a toric variety $X = \toric(\Sigma)$ over $\C$. We start out on the polyhedral side and let $\Sigma$ be a fan in $N_\bR$ with convex support of full dimension. 
Let $\Pol^+(\Sigma)$ denote the set of lattice polyhedra in $M_\bR$ compatible with $\Sigma$.
Also include the empty set in $\Pol^+(\Sigma)$. 
\begin{definition}
	\label{def:SigmaFamily}
	A \emph{$\Sigma$-family of polyhedra} is a finite set
	$
	S = \{ \nabla_i \kst i \in I \}
	$
	of lattice polyhedra $\nabla_i \in \Pol^+(\Sigma)$ satisfying the following conditions:
	\begin{enumerate}
		\item $\bigcup_{i \in I} \nabla_i\eqqcolon\nabla \in \Pol^+(\Sigma)$ is a lattice polyhedron compatible with $\Sigma$;
		\item all intersections $\nabla_{I'}\coloneqq\bigcap_{i\in I'}\nabla_i$ 
		with $\emptyset\neq I'\subseteq I$
		are either empty or compatible with $\Sigma$, 
		that is, $\nabla_{I'} \in \Pol^+(\Sigma)$.
		Furthermore, set $\nabla_\emptyset \coloneqq \nabla$.
	\end{enumerate}
\end{definition}
We consider two categories and a functor associated to a $\Sigma$-family between them.
\begin{definition}
	\label{def:functorToSigmaFamily}
	Let $2^I$ be the \emph{poset category} associated to the power set of the finite set $I$, that is, objects are subsets of $I$ and there exists a unique morphism from $I'$ to $I''$ whenever $I' \subseteq I''$. 
	Let $\Pol^+(\Sigma)$ also denote the \emph{category of lattice polyhedra compatible with the fan} $\Sigma$, that is, objects are compatible lattice polyhedra as defined above or the empty set and for two polyhedra $\Delta_1$ and $\Delta_2$ in $\Pol^+(\Sigma)$ we define
	\begin{equation}
	\Hom_{\Pol^+(\Sigma)}(\Delta_1, \Delta_2) = 
	\begin{cases}
	\{\text{inclusion}\} & \,\text{ if } \Delta_1 \subseteq \Delta_2, \\
	\emptyset & \, \text{ if } \Delta_1 \nsubseteq \Delta_2.
	\end{cases}
	\end{equation}
	Given an $\Sigma$-family of polyhedra $S$ we can define a contravariant functor
	\begin{equation}
	F_S\colon2^I \to  \Pol^+(\Sigma) \hspace{1em}
	I' \mapsto  \nabla_{I'} = \bigcap_{i \in I'} \nabla_i,\hspace{1em}
	(I' \subseteq I'') \mapsto (\nabla_{I''} \hookrightarrow \nabla_{I'}).
	\end{equation}
\end{definition}
In general, given any contravariant functor $F\colon2^I\to \Pol^+(\Sigma)$, we define a subcomplex of the Koszul complex $\bigwedge^\kbb\C^I$ as follows.
\begin{definition}
	\label{def:koszulComplexToSigmaFamily}
	For a contravariant functor $F \colon 2^I \to \Pol^+(\Sigma)$ and $\koszulindex \in \N$ let
	\begin{equation}
	\CC^F_\koszulindex:=\spann \{e_{I'}\kst \#I'=\koszulindex\mbox{ and }F(I')\neq\emptyset\}
	\subseteq\bigwedge^\koszulindex \C^I.
	\end{equation}
	Here $e_{I'}\coloneqq \underset{i \in I'}{\bigwedge} e_i$, where $\{e_i\kst i\in I\}$,
	denotes the canonical basis of $\C^I$. \\[1ex]
	The map $d\colon\CC^F_{\koszulindex+1}\to\CC^F_\koszulindex$ is defined as for the Koszul complex $\bigwedge^\kbb\C^I$. 
	For a fixed total order on $I$, say $I =\{1 < \dots < \#I\}$, and $I' = \{i_0 < \dots< i_\koszulindex\} \subseteq I$, we have
	\begin{equation}
	d(e_{I'}) = \sum_{j = 0}^\koszulindex (-1)^j e_{I' \setminus \{i_j\}} = \sum_{i\in I'} (-1)^{|i|} e_{I'\setminus \{i\}},
	\end{equation}
	where $|i|$ refers to the index $j$ of $i = i_j$ in $I'$.\\[1ex]
Recall that the polyhedra $F(I') \in \Pol^+(\Sigma)$ are contained in $M_\bR$.
For each $m\in M_\R$ we define the \emph{evaluation subcomplex} (``at $m$'') as
\begin{equation}
\CC^F_\koszulindex(m):=\spann \{e_{I'}\kst \#I'=\koszulindex\mbox{ and }m\in F(I')\}
\subseteq \CC^F_\koszulindex \subseteq\bigwedge^\koszulindex \C^I
\end{equation}
with the restrictions $d_{|\CC_{p+1}^F(m)} \colon \CC_{p+1}^F(m) \to \CC_p^F(m)$ as boundary maps. 
\end{definition}
\begin{lemma}
	\label{lemma:KoszulSubcomplexExact}
	For a $\Sigma$-family $S$ set $F \coloneqq F_S$. 
	\begin{enumerate}
		\item[(i)] If $m\in M_\R\setminus \nabla$, then
		$\CC^F_\koszulindex(m)=0$ for all $\koszulindex\geq 0$. 
		In particular, $\CC^F_\kbb(m)$ is exact.
		\item[(ii)] If $m\in\nabla\subseteq M_\R$, then $\CC^F_\kbb(m)$ is still exact.
		\item[(iii)] The complex $\CC_\kbb^F = \sum_{m \in M} \CC_\kbb^F(m)$ is exact. 
	\end{enumerate}
\end{lemma}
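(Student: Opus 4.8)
The three parts build on each other, and the real content is in (ii); parts (i) and (iii) are essentially bookkeeping. For (i), if $m \notin \nabla = \nabla_\emptyset$ then $m \notin \nabla_i$ for every $i \in I$ (since each $\nabla_i \subseteq \nabla$), hence $m \notin F(I')$ for every $\emptyset \neq I' \subseteq I$, so $\CC_\koszulindex^F(m) = 0$ for all $\koszulindex \geq 0$; the zero complex is trivially exact. For (iii), note that $\CC_\koszulindex^F = \bigoplus_{m \in M \cap \nabla} \CC_\koszulindex^F(m)$ is not literally a direct sum of the evaluation subcomplexes (a basis vector $e_{I'}$ lies in $\CC_\koszulindex^F(m)$ for every lattice point $m \in F(I')$), but exactness is still inherited: I would argue that $\CC_\kbb^F$ is filtered, or more cleanly, observe that for each basis vector $e_{I'}$ appearing in $\CC_\koszulindex^F$ the polyhedron $F(I')$ is nonempty and hence contains a lattice point (lattice polyhedra compatible with $\Sigma$ have vertices in $M$), so $\CC_\koszulindex^F = \sum_m \CC_\koszulindex^F(m)$; then a diagram chase reduces a cycle in $\CC_\kbb^F$ to cycles in finitely many $\CC_\kbb^F(m)$ and applies (ii). Actually the cleanest route for (iii): the inclusion–exclusion/Koszul differential is compatible with the $M$-grading once one picks, for each $I'$, the full lattice-point set of $F(I')$; I expect to phrase $\CC_\kbb^F$ as the total complex assembled from the $\CC_\kbb^F(m)$ over the (finite) set of lattice points in $\nabla$ and deduce exactness termwise.

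The heart is (ii). Fix $m \in \nabla \cap M_\bR$ and set $I_m \coloneqq \{ i \in I \mid m \in \nabla_i \}$. The key observation is that for $\emptyset \neq I' \subseteq I$ we have $m \in F(I') = \nabla_{I'} = \bigcap_{i \in I'} \nabla_i$ if and only if $I' \subseteq I_m$. Moreover $m \in \nabla = \bigcup_i \nabla_i$ forces $I_m \neq \emptyset$. Therefore $\CC_\koszulindex^F(m)$ is spanned by exactly those $e_{I'}$ with $\# I' = \koszulindex$ and $I' \subseteq I_m$ — in other words, $\CC_\kbb^F(m)$ is precisely the full Koszul complex $\bigwedge^\kbb \C^{I_m}$ on the nonempty index set $I_m$, with its standard differential. (One must check that the differential of $\CC_\kbb^F$ restricts correctly: if $I' \subseteq I_m$ then every $I' \setminus \{i\} \subseteq I_m$ as well, and also $F(I' \setminus \{i\}) \supseteq F(I') \ni m$ is nonempty, so all boundary terms survive in $\CC_\kbb^F(m)$; this is immediate.)

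Now $\bigwedge^\kbb \C^{I_m}$ with the Koszul differential $d(e_{I'}) = \sum_j (-1)^j e_{I' \setminus \{i_j\}}$ is the (augmented, or reduced) simplicial chain complex of the full simplex on vertex set $I_m$ — equivalently, it is the Koszul complex of the zero map, whose homology is well known: it is acyclic away from the top, and whether the bottom end ($\koszulindex = 0$, i.e.\ $e_\emptyset$) is included decides exactness at the last spot. Here $\CC_0^F(m)$ is spanned by $e_{I'}$ with $\#I' = 0$ and $m \in F(\emptyset) = \nabla$, which holds, so $e_\emptyset \in \CC_0^F(m)$; thus $\CC_\kbb^F(m)$ is the \emph{augmented} chain complex of a nonempty simplex, which is exact (its reduced homology vanishes in all degrees because a nonempty simplex is contractible). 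Hence $\CC_\kbb^F(m)$ is exact, proving (ii). I would state this as: for each $m$, $\CC_\kbb^F(m) \cong \bigl(\bigwedge^\kbb \C^{I_m}, d_{\mathrm{Kos}}\bigr)$ with $I_m \neq \emptyset$, and invoke the standard fact that this Koszul complex (equivalently, the simplicial chain complex of $\Delta^{\#I_m - 1}$ augmented by $\C$ in degree $-1$) is acyclic.

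\textbf{Main obstacle.} The only genuinely delicate point is bookkeeping in part (iii): confirming that passing from the evaluation subcomplexes $\CC_\kbb^F(m)$ to their sum $\CC_\kbb^F$ preserves exactness, since the sum is not direct at the level of basis vectors. I expect to handle this by noting that a cycle in $\CC_\kbb^F$ has finite support among the lattice points of $\nabla$ and by exploiting that $d$ does not mix the "membership sets" $I_m$ adversely — either via a spectral-sequence / filtration argument on the finite poset of lattice points, or by directly splitting the cycle and applying (ii) pointwise. Everything else is a direct unwinding of definitions plus the standard acyclicity of the Koszul complex of a nonempty index set.
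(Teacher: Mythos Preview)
Your arguments for (i) and (ii) are correct and essentially identical to the paper's: in (ii) you set $I_m=\{i\in I\mid m\in\nabla_i\}$, observe that $m\in F(I')\iff I'\subseteq I_m$, and identify $\CC^F_\kbb(m)$ with the full Koszul complex $\bigwedge^\kbb\C^{I_m}$, which is exact since $I_m\neq\emptyset$.

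For (iii), however, your proposal remains a sketch where the paper supplies a concrete mechanism. You correctly flag that $\CC^F_\kbb=\sum_m\CC^F_\kbb(m)$ is not a direct sum and that this is the delicate step, but your two suggested routes (a filtration/spectral-sequence argument, or ``splitting a cycle pointwise'') are not made precise. The paper's device is cleaner than either: since every $\CC^F_\koszulindex(m)$ is spanned by a subset of the fixed basis $\{e_{I'}\}$, these coordinate subspaces satisfy the modular law
\[
\bigl(\CC^F_\kbb(m_1)+\CC^F_\kbb(m_2)\bigr)\cap\CC^F_\kbb(m_3)=\bigl(\CC^F_\kbb(m_1)\cap\CC^F_\kbb(m_3)\bigr)+\bigl(\CC^F_\kbb(m_2)\cap\CC^F_\kbb(m_3)\bigr),
\]
and moreover any intersection $\bigcap_{m\in M'}\CC^F_\kbb(m)=\bigwedge^\kbb\C^{I_{M'}}$ with $I_{M'}=\{i\in I\mid M'\subseteq\nabla_i\}$ is again a Koszul complex, hence exact. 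One then inducts via the short exact sequence
\[
0\to\Bigl(\sum_{m\in M'}\CC^F_\kbb(m)\Bigr)\cap\CC^F_\kbb(m_0)\to\Bigl(\sum_{m\in M'}\CC^F_\kbb(m)\Bigr)\oplus\CC^F_\kbb(m_0)\to\sum_{m\in M'\cup\{m_0\}}\CC^F_\kbb(m)\to 0,
\]
adding one $m_0$ at a time. The two outer terms are exact by induction and by the intersection observation, so the middle sum is exact.

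One genuine slip: you assert that the set of lattice points in $\nabla$ is finite. In this paper $\nabla$ is a lattice \emph{polyhedron}, possibly unbounded, so $\nabla\cap M$ can be infinite. The paper handles this by noting that the ambient Koszul complex $\bigwedge^\kbb\C^I$ is finite-dimensional, so the ascending chain of partial sums $\sum_{m\in M'}\CC^F_\kbb(m)$ stabilises after finitely many steps regardless; your ``finite support'' heuristic would need the same observation to be made rigorous.
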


\begin{proof}
	Statement (i) is clear because $F(I') \subseteq \nabla$ for all $I' \subseteq I$.\\
	For statement (ii) set $I_m:=\{i\in I\kst m\in F(i) \coloneqq F(\{i\}) =\nabla_i\}$.
	Then for $I' \subseteq I$ we have $I' \subseteq I_m$ if and only if $m \in \bigcap_{i \in I'} F(i) = F(I')$.
	Therefore $\CC^F_\kbb(m)$ equals $\bigwedge^\kbb\C^{I_m}$, which is exact \cite[Cor. 4.5.5]{weibel1995introduction}.\\[1ex]
	For statement (iii) first note that we have
	$\CC_\kbb^F = \sum_{m \in M_\R} \CC_\kbb^F(m) = \sum_{m \in \nabla \cap M} \CC_\kbb^F(m).$
	By (i) and (ii) each of the summands $\CC_\kbb^F(m)$ is exact. For any subset $M' \subseteq M_\bR$ the intersection of the evaluation subcomplexes for $m \in M'$ is
	$\bigcap_{m \in M'} \CC_\kbb^F(m) = \bigwedge^\kbb\C^{I_{M'}}$,
	where $I_{M'} \coloneqq \{i \in I \kst M' \subseteq F(i) = \nabla_i\}$.
	Furthermore, we have for $m_1,m_2,m_3\in M_\bR$:
	$$
	(\CC_\kbb^F(m_1) + \CC_\kbb^F(m_2)) \cap \CC_\kbb^F(m_3) = (\CC_\kbb^F(m_1) \cap \CC_\kbb^F(m_3)) + (\CC_\kbb^F(m_2) \cap \CC_\kbb^F(m_3)),
	$$
	because the $\CC_\koszulindex^F(m_i)$ are all subspaces of $\CC_\koszulindex^F$ spanned by a subset of the prescribed basis $\{e_{I'} \kst I' \subseteq I, \#I' = \koszulindex , \,F(I') \ne \emptyset \}$ of $\CC_\koszulindex^F$. 
{\\[1ex] Now, we switch to a
slightly more general setup. Assume that $\CC_\kbb^i\subseteq \CC_\kbb^F$ 
($i=1,\ldots,k$) are complexes such that 
all their mutual intersections are exact and
$$
\textstyle
(\sum_{i\in I} \CC_\kbb^i) \cap \CC_\kbb^j = \sum_{i\in I}
(\CC_\kbb^i\cap \CC_\kbb^j)
$$
for any $j$ and $I\subseteq\{1,\ldots,k\}$. 
Then $\sum_{i=1, \dots, k}\CC_\kbb^i$ is exact, too.
The proof uses induction by $k$ and exploits the short exact sequence
$$
0 \to \big(\sum_{i=1, \dots, k-1} \CC_\kbb^i\big) \cap \CC_\kbb^k  \to
\big(\sum_{i=1, \dots, k-1} \CC_\kbb^i\big) \oplus \CC_\kbb^k \to \sum_{i=1, \dots, k}
\CC_\kbb^i \to 0.
$$
The generalization of the setup is needed to ensure 
that the induction hypothesis
implies that, besides the central term, the left most complex is exact, too. 
\\[1ex]
Finally, we apply the previous claim to the complexes
$\CC_\kbb^i:= \CC^F_\kbb(m)$ where $m\in\nabla \cap M$ replaces
$i=1,\ldots,k$.
}%
Note that even though the set $\nabla \cap M$ may be infinite, the induction ends after finitely many steps since the Koszul complex $\bigwedge^\kbb\C^I$ is finite-dimensional and all $\CC^F_\kbb(m) \subseteq \CC^F_\kbb \subseteq \bigwedge^\kbb\C^I$.
\end{proof}
\begin{example}
	We revisit an example already encountered in the Introduction~(\ref{subsection:exampleF1}).\\
	Let $\Sigma$ be the fan of the first Hirzebruch surface $\F_1 = \PP(\Delta) = \toric(\Sigma)$.
	\newcommand{\scaleA}{0.5}
	\newcommand{\spaceA}{\hspace*{2em}}
	$$
	\begin{tikzpicture}[scale=\scaleA]
	\draw[color=oliwkowy!40] (-0.3,-0.3) grid (2.3,1.3);
	\draw[thick, color=black]
	(0,0) -- (2,0) -- (1,1) -- (0,1) -- (0,0);
	\fill[pattern color=green!50, pattern=north west lines]
	(0,0) -- (2,0) -- (1,1) -- (0,1) -- (0,0);
	\fill[thick, color=black]
	(0,0) circle (2pt) (1,0) circle (2pt) (2,0) circle (2pt)
	(0,1) circle (2pt) (1,1) circle (2pt);
	\draw[thick, color=red]
	(0,0) circle (3pt);
	\draw[thick,  color=black]
	(1,-1) node{$\Delta$};
	\end{tikzpicture}
	\spaceA
	\raisebox{-0.25em}{
	\begin{tikzpicture}[scale=\scaleA]
	\draw[color=oliwkowy!40] (-1.3,-1.3) grid (1.3,1.3);
	\draw[thick, color=red]
	(0,0) circle (3pt);
	\draw[thick,  color=black, ->]
	(0,0) -- (0, -1.3);
	\draw[thick,  color=black, ->]
	(0,0) -- (1.3,0);
	\draw[thick,  color=black, ->]
	(0,0) -- (0,1.3);
	\draw[thick,  color=black, ->]
	(0,0) -- (-1.3,-1.3);
	\draw[thick,  color=black]
	(0.0,-2) node{$\Sigma = \normal(\Delta)$};
	\fill[pattern color=black!30, pattern=north east lines]
	(0,0) -- (1,0) -- (1,-1) -- (0, -1);
	\fill[pattern color=black!30, pattern=north west lines]
	(0,0) -- (-1,-1) -- (0, -1);
	\fill[pattern color=black!30, pattern=north east lines]
	(0,0) -- (-1,-1) -- (-1, 1) -- (0, 1);
	\fill[pattern color=black!30, pattern=north west lines]
	(0,0) -- (1,0) -- (1,1) -- (0, 1);
	\end{tikzpicture}}
	$$
	The fan is complete, so compatible polyhedra will be polytopes.
	Define the $\Sigma$-family
	$$
	S \coloneqq \Bigg\{\raisebox{-1em}{
		\begin{tikzpicture}[scale=\scaleA]
		\draw[color=oliwkowy!40] (-0.3,-0.3) grid (2.3,1.3);
		\draw[thick, color=black]
		(0,0) -- (2,0) -- (1,1) -- (0,1) -- (0,0);
		\fill[pattern color=green!50, pattern=north west lines]
		(0,0) -- (2,0) -- (1,1) -- (0,1) -- (0,0);
		\fill[thick, color=black]
		(0,0) circle (2pt) (1,0) circle (2pt) (2,0) circle (2pt)
		(0,1) circle (2pt) (1,1) circle (2pt);
		\draw[thick, color=red]
		(0,1) circle (3pt);
		\draw[thick,  color=black]
		(1,2) node{$\nabla_0$};
		\end{tikzpicture}},
	\spaceA
	\raisebox{-1em}{\begin{tikzpicture}[scale=\scaleA]
		\draw[color=oliwkowy!40] (-0.3,-0.3) grid (1.3,1.3);
		\draw[thick, color=black]
		(0,0) -- (1,0) -- (0,1) -- (0,0);
		\fill[pattern color=blue!50, pattern=north east lines]
		(0,0) -- (1,0) -- (0,1) -- (0,0);
		\fill[thick, color=black]
		(0,0) circle (2pt) (1,0) circle (2pt) (0,1) circle (2pt);
		\draw[thick, color=red]
		(0,0) circle (3pt);
		\draw[thick,  color=black]
		(0.5,2) node{$\nabla_1$};
		\end{tikzpicture}}
	\Bigg\} \text{ and set } 
	\nabla \coloneqq \nabla_0 \cup \nabla_1 = 
	\raisebox{-1em}{\begin{tikzpicture}[scale=\scaleA]
		\draw[color=oliwkowy!40] (-0.3,-0.3) grid (2.3,2.3);
		\draw[thick, color=black]
		(0,0) -- (2,0) -- (1,1) -- (0,2) -- (0,0);
		\fill[pattern color=yellow!50, pattern=north west lines]
		(0,0) -- (2,0) -- (1,1) -- (0,2) -- (0,0);
		\fill[thick, color=black]
		(0,0) circle (2pt) (1,0) circle (2pt) (2,0) circle (2pt)
		(0,1) circle (2pt) (1,1) circle (2pt) (0,2) circle (2pt);
		\draw[thick, color=red]
		(0,1) circle (3pt);
		\end{tikzpicture}} \in \Pol^+(\Sigma).
	$$
	The complex $\CC_\kbb^F$ is
	$0 \to \bC \cdot (e_0 \wedge e_1) \to \bC\cdot e_0 \oplus \bC \cdot e_1 \to \bC \cdot e_\emptyset \to 0.$
	The evaluation subcomplex $\CC^F_\kbb(m)$ for $m \in M_\R$ is the subcomplex indicating whether
	$m \in \nabla \text{ for } \CC^F_0(m)$,
 	$m \in \nabla_{0}$ respectively $m \in \nabla_{1}$ for $\CC^F_1(m)$, 
	and $m \in \nabla_0 \cap \nabla_1 = 
	\raisebox{-0.35em}{\begin{tikzpicture}[scale=\scaleA]
		\draw[color=oliwkowy!40] (-0.5,-0.5) grid (1.5,0.5);
		\draw[thick, color=black]
		(0,0) -- (1,0);
		\fill[thick, color=black]
		(0,0) circle (2pt) (1,0) circle (2pt) ;
		\draw[thick, color=red]
		(0,0) circle (3pt);
		\end{tikzpicture}}$ for $\CC^F_2(m)$.
\end{example}

There is a particularly nice case in which exactness of the evaluation subcomplexes for all lattice points $m \in M$ is equivalent to exactness for all $m \in M_\bR$.
\begin{lemma}
	\label{lemma:exConeLatt}
	Assume that $\Sigma$ consists of a single cone $\sigma$ and its faces, with $\sigma^\vee$ smooth and full-dimensional. Then for any contravariant functor \mbox{$F \colon 2^I \to \Pol^+(\Sigma)$}, the complexes
	$\CC^F_\kbb(m)$ are exact for all $m\in M$ if and only they are exact for all
	$m\in M_\bR$.
\end{lemma}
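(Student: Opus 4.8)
The plan is to reduce to the standard orthant and then observe that componentwise rounding--down $M_\bR \to M$ leaves every evaluation subcomplex unchanged, so that the two families $\{\CC^F_\kbb(m)\}_{m\in M}$ and $\{\CC^F_\kbb(m)\}_{m\in M_\bR}$ literally coincide. First I would normalise coordinates: since $\sigma^\vee$ is smooth and full-dimensional, its primitive ray generators form a $\bZ$-basis of $M$, and applying the corresponding element of $\GL(M)$ we may assume $M = \bZ^r$, $M_\bR = \bR^r$ and $\sigma^\vee = \bR_{\ge 0}^r$, so that $\sigma = \bR_{\ge 0}^r \subseteq N_\bR$ and $\Sigma$ is the set of coordinate faces of this orthant. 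This is a lattice isomorphism, hence only relabels $F$, the complex $\CC^F_\kbb$ and all the subcomplexes $\CC^F_\kbb(m)$, and is therefore harmless.

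Next I would describe $\Pol^+(\Sigma)$ in this single--cone situation. Let $\Delta \in \Pol^+(\Sigma)$ be nonempty. Then $\tail(\Delta) = |\Sigma|^\vee = \sigma^\vee$ and $\Sigma$ refines $\normal(\Delta)$; since $\sigma$ is the unique maximal cone of $\Sigma$ and $|\normal(\Delta)| = \tail(\Delta)^\vee = \sigma$, the cone $\sigma$ must itself lie in $\normal(\Delta)$ and, being full-dimensional, is its unique maximal cone. Hence $\normal(\Delta) = \Sigma$, so $\Delta$ has a single vertex $v$; together with $\tail(\Delta) = \sigma^\vee$ this forces $\Delta = v + \sigma^\vee$, and $v \in M$ because $\Delta$ is a lattice polyhedron. (Alternatively one may directly invoke the identity $\Delta + \sigma^\vee = m_\sigma + \sigma^\vee$ recorded in Section~\ref{subsection:divisorsOnToricVarieties}.) Thus in our coordinates every nonempty value $F(I')$ is an ``integer upper orthant'' $\{x \in \bR^r \kst x \ge v_{I'}\}$ for some $v_{I'} \in \bZ^r$.

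Finally I would run the rounding argument. Let $\lfloor\cdot\rfloor \colon M_\bR \to M$ round down componentwise. For a nonempty $F(I') = \{x \kst x \ge v_{I'}\}$ with $v_{I'} \in \bZ^r$ and any $m \in M_\bR$, the integrality of the entries of $v_{I'}$ gives
\[
m \in F(I') \iff m \ge v_{I'} \iff \lfloor m\rfloor \ge v_{I'} \iff \lfloor m\rfloor \in F(I'),
\]
and when $F(I') = \emptyset$ both membership statements fail. So for every $\koszulindex$ the subspaces $\CC^F_\koszulindex(m)$ and $\CC^F_\koszulindex(\lfloor m\rfloor)$ are spanned by the same subset of the basis $\{e_{I'}\}$, whence $\CC^F_\kbb(m) = \CC^F_\kbb(\lfloor m\rfloor)$ as subcomplexes of $\bigwedge^\kbb\C^I$. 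Since $\lfloor\cdot\rfloor$ maps $M_\bR$ onto $M$ and restricts to the identity on $M$, the set of complexes $\{\CC^F_\kbb(m) \kst m \in M_\bR\}$ equals $\{\CC^F_\kbb(m) \kst m \in M\}$, and the claimed equivalence is immediate.

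The only step with genuine content is the structural description of $\Pol^+(\Sigma)$: it is precisely here that the hypothesis ``$\Sigma$ is a single cone together with its faces'' --- rather than an honest fan --- enters, forcing every compatible polyhedron, and hence every value of $F$, to be a lattice translate of $\sigma^\vee$. Once this is in hand the rounding map does all the work, and smoothness of $\sigma^\vee$ is used only to make ``rounding down'' the ordinary componentwise operation, so that it visibly lands in $M$ and respects membership in every translate $v + \sigma^\vee$ simultaneously for all $v \in M$; dropping smoothness would spoil exactly this last point.
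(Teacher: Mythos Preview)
Your proof is correct and follows essentially the same approach as the paper: normalise coordinates so that $\sigma^\vee=\bR_{\ge0}^r$, observe that every nonempty $F(I')$ is then an integral translate $v_{I'}+\bR_{\ge0}^r$, and conclude via the componentwise floor that $\CC^F_\kbb(m)=\CC^F_\kbb(\lfloor m\rfloor)$. Your write-up is somewhat more detailed---in particular, you spell out why $\Pol^+(\Sigma)$ consists of single-vertex translates of $\sigma^\vee$ and where smoothness is actually used---but the underlying argument is identical to the paper's.
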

\begin{remark}
	This situation occurs naturally when we consider smooth affine toric varieties $\toric(\sigma)$ or an affine open subset $U_\sigma \subseteq \toric(\Sigma)$ for a cone $\sigma \in \Sigma$. In the latter case the polyhedra $F(I')$ are changed to $F(I') + \sigma^\vee$ when considering the affine open $U_\sigma$ (see section~(\ref{subsection:localizationOfKoszulComplex})). In the case of a functor $F_S$ associated to a $\Sigma$-family $S$ the $\nabla_i$ become $\nabla_i + \sigma\dual$ and the $\nabla_{I'}$ for $I' \subseteq I$ become $\nabla_{I'} + \sigma\dual$.
\end{remark}

\begin{proof}[Proof of the Lemma]
	Because $\sigma^\vee$ is smooth, we may choose coordinates such that $M=\bZ^\rankM$ and $\sigma^\vee=\bR^\rankM_{\geq 0}$.
	{%
	For $F(I') \in \Pol^+(\Sigma)$ we either have $F(I')=\emptyset$, in which case $m\in F(I') \iff \rounddown{m}\in F(I')$ is clear, or $F(I')$ has only one vertex and%
	}
	can be written as 
	$
	F(I') = r_{I'} + \R^\rankM_{\geq 0} \text{ for some } r_{I'} \in \Z^\rankM.
	$
	We obtain for every $m\in\R^\rankM$ that
	$$
	m\in F(I') \iff m\geq r_{I'} \iff
	\rounddown{m}\geq r_{I'}\iff \rounddown{m}\in F(I'),
	$$
	where $\rounddown{m}$ and the relation $\geq$ are meant componentwise.
	Hence, $\CC_\kbb^F(m) = \CC_\kbb^F(\rounddown{m})$, proving the claim.
\end{proof}

\begin{example}
	We give a counterexample for the global situation, that is, for $\Sigma$ consisting of more than a cone $\sigma$.
	Let $\Sigma = \normal(\Delta)$ be the fan of $\PP^1 = \PP(\Delta) = \toric(\Sigma)$:
	\newcommand{\scaleA}{0.8}
	\newcommand{\spaceA}{\hspace*{1em}}
	\begin{center}
		$\Delta = 
		\raisebox{-0.35em}{\begin{tikzpicture}[scale=\scaleA]
		\draw[color=oliwkowy!40] (-0.3,-0.3) grid (1.3,0.3);
		\draw[thick, color=black]
		(0,0) -- (1,0);
		\fill[thick, color=black]
		(0,0) circle (2pt) (1,0) circle (2pt);
		\draw[thick, color=red]
		(0,0) circle (3pt);
		\end{tikzpicture}}$ in $M_\bR = \bR$,
		\spaceA
		$\normal(\Delta) = 
		\raisebox{-0.35em}{\begin{tikzpicture}[scale=\scaleA]
		\draw[color=oliwkowy!40] (-1.3,-0.3) grid (1.3,0.3);
		\draw[thick, color=red]
		(0,0) circle (3pt);
		\draw[thick, ->,  color=black]
		(0,0) -- (1.3,0);
		\draw[thick, ->,  color=black]
		(0,0) -- (-1.3,0);
		\end{tikzpicture}}$ in $N_\bR = \bR$.
	\end{center}
	Let $I \coloneqq \{0,1\}$ and define the functor $F \colon 2^I \to \Pol^+(\Sigma)$ as follows
	\begin{align*}
	\emptyset \mapsto
	\raisebox{-0.35em}{\begin{tikzpicture}[scale=\scaleA]
		\draw[color=oliwkowy!40] (-0.3,-0.3) grid (1.3,0.3);
		\draw[thick, color=black]
		(0,0) -- (1,0);
		\fill[thick, color=black]
		(0,0) circle (2pt) (1,0) circle (2pt);
		\draw[thick, color=red]
		(0,0) circle (3pt);
		\end{tikzpicture}}, \spaceA
	\{0\} \mapsto
	\raisebox{-0.35em}{\begin{tikzpicture}[scale=\scaleA]
		\draw[color=oliwkowy!40] (-0.3,-0.3) grid (1.3,0.3);
		(0,0) -- (1,0);
		\fill[thick, color=black]
		(0,0) circle (2pt);
		\draw[thick, color=red]
		(0,0) circle (3pt);
		\end{tikzpicture}}, \spaceA
	\{1\} \mapsto
	\raisebox{-0.35em}{\begin{tikzpicture}[scale=\scaleA]
		\draw[color=oliwkowy!40] (-0.3,-0.3) grid (1.3,0.3);
		\fill[thick, color=black]
		(1,0) circle (2pt);
		\draw[thick, color=red]
		(0,0) circle (3pt);
		\end{tikzpicture}}, \spaceA
	\{0,1\} \mapsto 
	\raisebox{-0.35em}{\begin{tikzpicture}[scale=\scaleA]
		\draw[color=oliwkowy!40] (-0.3,-0.3) grid (1.3,0.3);
		\draw[thick, color=red]
		(0,0) circle (3pt);
		\end{tikzpicture}} = \emptyset.
	\end{align*}
	The only non-trivial evaluation subcomplexes at lattice points  are at $m = 0$, $m = 1$:
	\begin{align*}
	\CC_\kbb^F(0)\colon & 0 \to \C \cdot e_{\{0\}} \xrightarrow{\cong} \C \cdot e_\emptyset \to 0 \\
	\CC_\kbb^F(1)\colon & 0 \to \C \cdot e_{\{1\}} \xrightarrow{\cong} \C \cdot e_\emptyset \to 0
	\end{align*}
	These are exact. But at $m = \frac{1}{2} \in M_\bR$ we have the non-exact evaluation subcomplex
	$$
	\CC_\kbb^F(\frac{1}{2})\colon \spaceA 0 \to 0 \to \C \cdot e_\emptyset \to 0.
	$$
\end{example}

\subsection{Localization of the Koszul Complex}
\label{subsection:localizationOfKoszulComplex}
We use the notation of section (\ref{subsection:koszulComplexOfPolyhedra}).
In section (\ref{section:displayingExt}) we will think about the polyhedra $\nabla_{I'}$ as representing some nef line
bundles on some toric variety $\PP(\Delta)=\toric(\normal(\Delta))$ over $\C$.
Then the complexes constructed in subsection (\ref{subsection:koszulComplexOfPolyhedra}) describe
maps between the global sections of them, and thus among the sheaves
themselves. To understand the local behavior of these complexes, we will
look at the affine charts $\toric(\sigma)\subseteq\PP(\Delta)$ for cones
$\sigma\in \normal(\Delta)$. 

\begin{definition}
	\label{def:localizedFunctor}
	For a contravariant functor $F \colon 2^I \to \Pol^+(\Sigma)$, 
where $|\Sigma|$ is convex and full-dimensional in $N_\bR$, and a cone $\sigma \in \Sigma$ define the functor 
	\begin{equation}
	F^\sigma \colon 2^I \to \Pol^+(\sigma), \, 
                        I' \mapsto F(I') + \sigma\dual.
	\end{equation}
	Because $F(I') \in \Pol^+(\Sigma)$ has tail cone $|\Sigma|\dual$ {or is empty} and $\sigma \in \Sigma$ implies $|\Sigma|\dual \subseteq \sigma\dual$, the image $F(I') +\sigma\dual$ has tail cone \mbox{$|\Sigma|\dual + \sigma\dual = \sigma\dual$} {or is empty}. If $\sigma\dual$ is full-dimensional in $M_\bR$, then the normal fan of $F(I') + \sigma\dual$ consists of just $\sigma$ with its faces, that is, $F(I') + \sigma\dual \in \Pol^+(\sigma)$.
\end{definition}

\begin{proposition}
	\label{prop-stayExact}
	For a polyhedron $\Delta$ and a finite set $I$,
	let \mbox{$F \colon 2^I \to \Pol^+(\normal(\Delta))$} be a contravariant functor satisfying the conclusions (i) - (iii) of Lemma \ref{lemma:KoszulSubcomplexExact}.
	Then the complexes 
	\mbox{$\CC^{F^\sigma}_\kbb(m)$} are exact
	for all $m \in M_\bR$ and for all $\sigma\in\normal(\Delta)$.
\end{proposition}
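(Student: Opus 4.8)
The plan is to reduce the exactness of the localized complexes $\CC^{F^\sigma}_\kbb(m)$ for a fixed cone $\sigma \in \normal(\Delta)$ to the exactness results we already have for the original functor $F$, via Lemma~\ref{lemma:exConeLatt}. First I would invoke that lemma: since $F^\sigma$ takes values in $\Pol^+(\sigma)$ (by Definition~\ref{def:localizedFunctor}, once we note $\sigma\dual$ is full-dimensional because $|\normal(\Delta)|$ is full-dimensional), and since after a smoothing of $\sigma\dual$ the hypotheses of Lemma~\ref{lemma:exConeLatt} are met — \emph{but here is the first subtlety}: $\sigma\dual$ need not be smooth. So the actual reduction is slightly different: I would not appeal directly to Lemma~\ref{lemma:exConeLatt}, but instead prove directly that $\CC^{F^\sigma}_\kbb(m)$ only depends on which polyhedra $F^\sigma(I')$ contain $m$, i.e. on the set $I_m^\sigma \coloneqq \{i \in I \mid m \in F^\sigma(i)\}$, and that as $m$ ranges over $M_\bR$ (resp. over a fundamental domain) these complexes are exactly $\bigwedge^\kbb \C^{I_m^\sigma}$, which is exact whenever $I_m^\sigma \neq \emptyset$, and for the top piece we must separately check exactness at the single remaining vector $e_\emptyset$. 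This is the same bookkeeping as in the proof of Lemma~\ref{lemma:KoszulSubcomplexExact}(ii)--(iii).

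The real content is therefore to relate $\CC^{F^\sigma}_\kbb(m)$ back to $\CC^F_\kbb(m')$ for a suitable $m'$. The key observation is: $m \in F^\sigma(I') = F(I') + \sigma\dual$ if and only if $m - \sigma\dual$ meets $F(I')$, equivalently $(m + \sigma) \cap F(I') \neq \emptyset$ when we dualize correctly — more precisely, writing each $F(I') = \nabla_{I'} + |\normal(\Delta)|\dual$, membership $m \in \nabla_{I'} + \sigma\dual$ is a condition only on the image of $m$ in $M_\bR / (\text{lineality})$, or better: there is a point $m_\sigma \in \nabla_{I'}$ (a vertex of the global polyhedron, the Cartier datum of $\sigma$) with $m - m_\sigma \in \sigma\dual$ controlling the whole local picture. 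Concretely, for the functor $F_S$ coming from a $\Sigma$-family, $F^\sigma(I') \neq \emptyset$ iff $F(I') \neq \emptyset$, and then $m \in F^\sigma(I')$ iff $m \in F(I') + \sigma\dual$; I would show that the poset of subsets $I'$ with $m \in F^\sigma(I')$ equals the poset with $m_\bullet \in F(I')$ for a suitable translate, by using that $F$ is built from intersections and $+\sigma\dual$ distributes over the relevant intersections in this compatible setting (this is where compatibility with $\Sigma = \normal(\Delta)$, so that all $\nabla_{I'} + \sigma\dual$ have normal fan $\{\text{faces of }\sigma\}$, is used).

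The main obstacle, and the step I expect to need the most care, is precisely that last distributivity/intersection point: in general $\big(\bigcap_{i \in I'} \nabla_i\big) + \sigma\dual \neq \bigcap_{i \in I'} (\nabla_i + \sigma\dual)$, so the localized functor $F^\sigma$ is \emph{not} literally $I' \mapsto \bigcap_{i} F^\sigma(\{i\})$ — one must check that the evaluation subcomplex nonetheless still has the form $\bigwedge^\kbb \C^{I_m^\sigma}$, or argue around it. The cleanest route, which I would pursue, is: fix $m \in M_\bR$, set $u \coloneqq m_\sigma$ the vertex of $\nabla$ selected by $\sigma$ (so $\nabla + \sigma\dual = u + \sigma\dual$ and similarly for each $\nabla_{I'}$ that is nonempty and compatible), and observe $m \in \nabla_{I'} + \sigma\dual$ iff the projection of $m$ away from $\sigma\dual$ lies in the image of $\nabla_{I'}$; this projection identifies the whole family $\{F^\sigma(I')\}$ locally near $m$ with the family $\{F(I')\}$ evaluated at a genuine lattice or real point $m'$ in $\nabla$, whence $\CC^{F^\sigma}_\kbb(m) = \CC^F_\kbb(m')$ which is exact by hypothesis (i)--(iii). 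I expect the write-up to amount to making this projection precise and verifying it is compatible with the Koszul differentials, after which exactness is immediate from Lemma~\ref{lemma:KoszulSubcomplexExact}.
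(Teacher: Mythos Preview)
Your approach has a genuine gap at precisely the step you flag as the ``main obstacle.'' You hope that for each $m \in M_\bR$ there is a single $m'$ with $\CC^{F^\sigma}_\kbb(m) = \CC^F_\kbb(m')$, produced by a ``projection away from $\sigma^\vee$.'' But for a maximal cone $\sigma$ the dual $\sigma^\vee$ is full-dimensional, so there is no nontrivial linear projection, and you never define one or verify the required property. In fact the claim is already false for a $\Sigma$-family in rank one: take $\nabla_1 = [0,1]$, $\nabla_2 = [1,2]$, $\nabla_3 = [2,3]$ in $M_\bR = \bR$ (so $\nabla = [0,3]$, $\nabla_{12} = \{1\}$, $\nabla_{23} = \{2\}$, $\nabla_{13} = \nabla_{123} = \emptyset$), with $\sigma^\vee = \bR_{\ge 0}$. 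At $m = 2.5$ one finds $\{I' : m \in F^\sigma(I')\} = \{\emptyset, \{1\}, \{2\}, \{3\}, \{1,2\}, \{2,3\}\}$, whereas no single point $m'$ lies in all three of $\nabla_1, \nabla_2, \nabla_3$. Thus $\CC^{F^\sigma}_\kbb(2.5)$ is exact (it is the complex attached to the path graph on $\{1,2,3\}$), but it equals no $\CC^{F}_\kbb(m')$. What is true in general is only that $\CC^{F^\sigma}_\kbb(m) = \sum_{m' \in\, m - \sigma^\vee} \CC^F_\kbb(m')$ as subcomplexes of $\bigwedge^\kbb \C^I$; proving exactness of such a sum is where the real work lies.

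The paper's proof supplies this argument, and it is quite different from yours. Rather than seek an $m'$, it builds $\sigma^\vee$ ray by ray. In rank one with $\sigma^\vee = \bR_{\ge 0}$ the identity above reads $\CC^{F^\sigma}_\kbb(m) = \sum_{m' \le m} \CC^F_\kbb(m')$, an increasing filtration with finitely many jumps (at the left endpoints $a(I')$ of the intervals $F(I')$). For $m$ smaller than all right endpoints one has $\CC^{F^\sigma}_\kbb(m) = \CC^F_\kbb(m)$; at each subsequent jump the cokernel of $\CC^{F^\sigma}_\kbb(m-\varepsilon) \hookrightarrow \CC^{F^\sigma}_\kbb(m)$ coincides with the cokernel of $\CC^{F}_\kbb(m-\varepsilon) \hookrightarrow \CC^{F}_\kbb(m)$ and is therefore exact, so a finite induction gives exactness for all $m$. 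In higher rank one adds a single ray $C$ by restricting both $F$ and $F + C$ to the affine line $m + (C - C)$, reducing to the rank-one case; a general $\sigma^\vee$ is then handled by successively adjoining its generating rays. This iterated ray construction is also why the Remark following the Proposition insists on allowing real (not only lattice) polyhedra in the intermediate steps.
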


\begin{remark}
	Moreover, this claim remains true if we consider any polyedra compatible with the fan $\normal(\Delta)$, not just lattice polyhedra.
	This generalization will be important for the induction performed in the upcoming proof.
\end{remark}

\begin{proof}
	{\em Step 1.} \label{step1} Assume that $\rankM=\rank M=1$.
	Then $\Delta \subseteq \R$ is a subset of the real line and its normal fan $\normal(\Delta)$ can contain 
	the origin $\sigma = \{0\}$,
	the rays $\sigma = \bR_{\ge 0}$ and $\sigma = \bR_{\le 0}$
	and the real line $\sigma = \bR$ as cones.
	For $\sigma = \bR$ and $\sigma = \{0\}$ the claim is clear from the assumptions.
	We deal with the case $\sigma^\vee = \R_{\ge 0}$ (the case $\sigma^\vee = \R_{\le 0}$ is analogous).
	A polyhedron $F(I')$ compatible with $\normal(\Delta)$ is either empty or an interval of the form $[a, b]$ with $- \infty < a \le b \le \infty$.
	For $F(I') \ne \emptyset$, let $a(I') \in \bR$ denote the start point of the interval $F(I')$ and $b(I') \in \bR \cup \{\infty\}$ the end point. 
	Let \mbox{$A(F) \coloneqq \{a(I') \kst F(I') \ne \emptyset\}$} and $B(F) \coloneqq \{b(I') \kst F(I') \ne \emptyset\}$ be the sets of start and end points, respectively. 
	Since $I$ is finite, $A(F) \subseteq \bR$ and $B(F) \subseteq \bR \cup \{\infty\}$ are finite. \\[1ex]
	We understand the local complex $\CC_\kbb^{F^{\sigma}}$ in terms of global evaluation subcomplexes:
	\begin{align*}
	\CC^{F^\sigma}_\koszulindex(m)
	&=
	\spann \{e_{I'} \kst \#I' = \koszulindex \mbox{ and } m \in F(I') + \bR_{\ge 0} \}  \\
	&=
	\spann \{e_{I'} \kst \#I' = \koszulindex \mbox{ and } a(I')\leq m \} \\
	&=
	\sum_{m'\leq m} \CC^{F}_\koszulindex(m')
	\subseteq \CC^{F}_\koszulindex.
	\end{align*}
	We make three observations:
	\begin{enumerate}
		\item \label{observation1} For $m \in \R$ satisfying $m < b$ for all $b \in B(F)$: 
		\begin{align*}
		\CC^{F^\sigma}_\koszulindex(m) &= \spann \{e_{I'} \kst \#I' = \koszulindex \mbox{ and } a(I')\leq m \} \\
		&= \spann \{e_{I'} \kst \#I' = \koszulindex \mbox{ and } a(I')\leq m \leq b(I')\} = \CC^F_\koszulindex(m).
		\end{align*}
		So for $m \ll 0$ the local complexes $\CC^{F^\sigma}_\kbb(m)$ stabilize to the exact sequence $\CC^F_\kbb(m)$. Recall that $B(F)$ is finite, so $m$ can be chosen small enough.
		\item For $m' \le m$ we have embeddings $\CC^{F^\sigma}_\kbb(m') \hookrightarrow \CC^{F^\sigma}_\kbb(m)$ of complexes.
		\item A non-trivial jump from 
{$\CC^{F^\sigma}_\kbb(<m):=\sum_{m'<m}\CC^{F^\sigma}_\kbb(m')$}
to $\CC^{F^\sigma}_\kbb(m)$ can only occur at a starting point $m = a(I') \in A(F)$ for some $I' \subseteq I$. 
	\end{enumerate}
	We inductively show exactness of $\CC_\kbb^{F^\sigma}(m)$ for all $m \in M_\bR$ by investigating these non-trivial jumps.
	Let $m \coloneqq a(I') \in A(F)$. 
	Because $I$ is finite we can choose an $\epsilon > 0$ such that any $a \in A(F)$ and any $b \in B(F)$ is either equal to $m$ or has distance $|a - m| > \epsilon$ respectively $|b-m| > \epsilon$. 
	Consider the non-trivial embedding
	$
	\CC_\kbb^{F^\sigma}(m - \epsilon) \hookrightarrow \CC_\kbb^{F^\sigma}(m)
	$
	and denote its cokernel by $\CC_\kbb$. 
	This yields an exact sequence
	\begin{equation}\label{equation:inductionExactSequence}
	0 \to \CC_\kbb^{F^\sigma}(m-\epsilon) \rightarrow \CC_\kbb^{F^\sigma}(m) \to \CC _\kbb\to 0.
	\end{equation}
	Because $b(F) \cap [m- \epsilon, m) = \emptyset$, there is also an embedding
	$\CC_\kbb^F(m-\epsilon) \hookrightarrow \CC_\kbb^F(m)$
	for the original functor $F$, also with cokernel $\CC_\kbb$. 
	This gives the short exact sequence
	\begin{equation}
	0 \to \CC_\kbb^F(m-\epsilon) \rightarrow \CC_\kbb^F(m) \to \CC_\kbb \to 0
	\end{equation}
	in which the first two compexes are exact by assumption,
	so $\CC_\kbb$ is also exact.\\
	By \hyperref[observation1]{observation (1)} above we can start from the exact complex $\CC^{F^\sigma}_\kbb(m) = \CC_\kbb^{F}(m)$ for $m < b$ for all $b \in B(F)$ and then inductively use the exact sequence~(\ref{equation:inductionExactSequence})
	with exact complexes $\CC_\kbb^{F^\sigma}(m-\epsilon)$ and $\CC_\kbb$ to obtain that $\CC_\kbb^{F^\sigma}(m)$ is exact for all $m \in M_\R$.
	\\[1ex]
	
	{\em Step 2.} \label{step2}
	Consider a general lattice $M$ of rank $\rankM \in \N_{\ge 1}$ and let $C\subseteq M_\R$ be any ray. 
	Replacing the polyhedra $F(I')$ by the Minkowski sums
	$F(I')+C$ yields the functor $F^C \colon 2^I \to \Pol_{\delta + C}^{+,\bR}$,
	where $\delta = |\normal(\Delta)|\dual$ is the tail cone of $\Delta$.
	Now, for $m\in M_\R$ we restrict the complexes
	$\CC^{F}_\kbb$ and $\CC^{F+C}_\kbb$ to the affine line
	$m+(C-C)$, that is,  consider $F(I') \cap (m+(C-C))$ and $(F(I') + C) \cap (m+(C-C))$.
	By assumption, the complex $\CC^{F\cap (m+(C-C))}_\kbb$ leads to exact evaluation
	subcomplexes 
	$\CC^{F\cap (m+(C-C))}_\kbb(m') = \CC^{F}_\kbb(m')$
	for all \mbox{$m'\in m+(C-C)$}
	{because in this case
		\begin{align*}
			\CC^{F\cap (m+(C-C))}_\koszulindex(m') &= \spann \{e_{I'} \kst \#I' = \koszulindex \mbox{ and } m' \in F(I')\cap(m+(C-C)) \} \\
			&= \spann \{e_{I'} \kst \#I' = \koszulindex \mbox{ and } m' \in F(I') \} = \CC^F_\koszulindex(m').
	\end{align*}}The complex $\CC^{(F+C)\cap (m+(C-C))}_\kbb$ inherits
	this property by \hyperref[step1]{step 1} and we obtain exactness of $\CC_\kbb^{F+C}(m')$ for all $m' \in M_\bR$.
	\\[1ex]
	
	{\em Step 3.} \label{step3}
	If $\sigma^\vee \in \normal(\Delta)$ is an arbitrary (non-trivial) polyhedral cone, then we may apply \hyperref[step2]{step 2} successively to all its
	fundamental rays.
\end{proof}

\subsection{Exactness of the Sequence Associated to a $\Sigma$-Family}
\label{subsection:exactnessExtension}
\begin{theorem}
	\label{theorem:SigmaFamilyGivesExactSequenceOfSheaves}
	Let $\Delta$ be a full-dimensional lattice polyhedron in $M_\bR$ with normal fan $\Sigma \coloneqq \normal(\Delta)$ 
	and $X \coloneqq \PP(\Delta)$ the toric variety given by $\Delta$.
	For \mbox{$S = \{ \nabla_i \kst i \in I\}$} a \mbox{$\Sigma$-family} with $\#I = n$,
	let $F_S \colon 2^I \to \Pol^+(\Sigma)$ be the functor introduced in Definition~\ref{def:functorToSigmaFamily}.
	The complex $\CC^{F_S}_\kbb$ with its exact evaluation subcomplexes $\CC^{F_S}_\kbb(m)$ for $m \in M_\bR$ 
	induces a $T$-equivariant exact sequence of direct sums of nef line bundles on $X$
	\begin{equation}\label{equation:inducedSequencesOfSheavesOnX}
	0 \to \CO_X(\nabla_I) \to \underset{\#I'=n-1}{\oplus} \CO_X(\nabla_{I'})\to \dots \to \underset{i \in I}{\oplus} \CO_X(\nabla_i) \to \CO_X(\nabla) \to 0.
	\end{equation}
\end{theorem}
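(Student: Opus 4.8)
The plan is to promote the combinatorial complex $\CC^{F_S}_\kbb$ to a genuine complex of sheaves, to test exactness locally on the affine charts $U_\sigma$, and there to split the test chart-by-chart and $M$-degree-by-$M$-degree until it becomes exactly the exactness of the localized evaluation subcomplexes $\CC^{F_S^\sigma}_\kbb(m)$ already established in Proposition~\ref{prop-stayExact}.

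First I would build the complex of sheaves. Each polyhedron $\nabla_{I'} = F_S(I')$ lies in $\Pol^+(\Sigma)$, so $D_{\nabla_{I'}}$ is a $T$-invariant nef Cartier divisor and $\CO_X(\nabla_{I'})$ is a $T$-equivariant nef line bundle; I would view each one as the subsheaf $\{f \in \C(X) \kst \div f + D_{\nabla_{I'}} \ge 0\}$ of the constant rational-function sheaf $\C(X)$. For $I'' \subseteq I'$ one has $\nabla_{I'} \subseteq \nabla_{I''}$, and from the description of the Cartier data in Section~\ref{subsection:divisorsOnToricVarieties} one gets $\langle m_\sigma^{(I')} - m_\sigma^{(I'')}, v\rangle = \min\langle\nabla_{I'},v\rangle - \min\langle\nabla_{I''},v\rangle \ge 0$ for all $v \in \sigma$, hence $m_\sigma^{(I')} - m_\sigma^{(I'')} \in \sigma^\vee \cap M$ and $D_{\nabla_{I'}} \le D_{\nabla_{I''}}$. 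So the subsheaves are nested and supply natural inclusions $\CO_X(\nabla_{I'}) \hookrightarrow \CO_X(\nabla_{I''})$; combining these with the Koszul signs of Definition~\ref{def:koszulComplexToSigmaFamily} gives the sequence~(\ref{equation:inducedSequencesOfSheavesOnX}), which is a complex because the inclusions commute and is manifestly $T$-equivariant, with terms direct sums of nef line bundles as claimed.

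Next I would localize. Exactness of~(\ref{equation:inducedSequencesOfSheavesOnX}) is a local question, and the charts $U_\sigma$ of the maximal cones of $\Sigma$ cover $X$; since the sheaves are quasi-coherent and the $U_\sigma$ are affine, it suffices to show that the complex of $\C$-vector spaces obtained by applying $\Gamma(U_\sigma, -)$ is exact for every $\sigma \in \Sigma$. By the formula for $\Gamma(U_\sigma, \CO_X(\nabla_{I'}))$ recalled in Section~\ref{subsection:divisorsOnToricVarieties}, its degree-$p$ term is $\bigoplus_{\#I' = p} \bigoplus_{m \in (\nabla_{I'} + \sigma^\vee)\cap M} \C \cdot \chi^m$, and the differential, given summandwise by character-preserving inclusions with Koszul signs, respects the $M$-grading. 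Identifying $\chi^m$ in the $\nabla_{I'}$-summand with $e_{I'}$, the $M$-degree-$m$ part of $\Gamma(U_\sigma,-)$ of~(\ref{equation:inducedSequencesOfSheavesOnX}) is precisely $\CC^{F_S^\sigma}_\kbb(m)$ with matching boundary maps, so $\Gamma(U_\sigma,-)$ of~(\ref{equation:inducedSequencesOfSheavesOnX}) equals $\bigoplus_{m \in M} \CC^{F_S^\sigma}_\kbb(m)$. Since $F_S$ satisfies conclusions (i)--(iii) of Lemma~\ref{lemma:KoszulSubcomplexExact}, Proposition~\ref{prop-stayExact} applied to $F_S$ and the fan $\Sigma = \normal(\Delta)$ gives exactness of $\CC^{F_S^\sigma}_\kbb(m)$ for all $m \in M_\bR$, in particular for all $m \in M$ and all $\sigma$, and the claim follows.

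The substantive work is already done inside Proposition~\ref{prop-stayExact}, so the only thing demanding care here is the bookkeeping of the construction: turning the polyhedral inclusions into honest sheaf morphisms with consistent signs (so that one really has a complex, not just a diagram), and matching each $M$-graded piece of $\Gamma(U_\sigma, -)$ with the corresponding $\CC^{F_S^\sigma}_\kbb(m)$ so that Proposition~\ref{prop-stayExact} can be invoked verbatim. I expect that translation --- rather than any exactness estimate --- to be the main obstacle.
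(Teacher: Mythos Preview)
Your proposal is correct and follows essentially the same route as the paper: construct the complex of sheaves via the inclusions $\CO_X(\nabla_{I'})\hookrightarrow\CO_X(\nabla_{I''})$ for $I''\subseteq I'$, localize to the affine charts $U_\sigma$, identify the $M$-degree-$m$ piece of $\Gamma(U_\sigma,-)$ with $\CC^{F_S^\sigma}_\kbb(m)$, and invoke Proposition~\ref{prop-stayExact}. The only cosmetic difference is that the paper phrases the construction of the sheaf maps via global generation and the embedding into $j_*\CO_T$, whereas you use the divisorial inequality $D_{\nabla_{I'}}\le D_{\nabla_{I''}}$ directly; both produce the same inclusions.
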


\begin{proof}
	The evaluation subcomplex $\CC^{F_S}_\kbb(m)$ includes $\C \cdot e_{I'}$ if and only if $m \in \nabla_{I'}$. For $m \in M$, this in turn is equivalent to $\chi^m \in \Gamma(X, \CO_X(D_{\nabla_{I'}}))$. Hence, the evaluation subcomplex $\CC^{F_S}_\kbb(m)$ corresponds to a complex of global sections in degree $m \in M$:
	\begin{equation}
	\CC^{F_S}_\koszulindex(m) \cong \underset{\#I' = \koszulindex}{\oplus} \Gamma(X, \CO_X(\nabla_{I'}))_m.
	\end{equation}
	Summing the $\CC_\kbb^{F_S}(m)$ for lattice points $m \in M$ yields a sequence of global sections:
	\begin{equation}\label{equation:sequenceOfGlobalSections}
	0 \to \Gamma(X, \CO_X(\nabla_I)) \to \dots \to \Gamma(X, \underset{i \in I}{\oplus} \CO_X(\nabla_i)) \to \Gamma(X, \CO_X(\nabla)) \to 0.
	\end{equation}
	The sheaves $\CO_X(\nabla_I)$, $\CO_X(\nabla_i)$, $i \in I$, and $\CO_X(\nabla)$ are globally generated. By \mbox{$T$-equivariance}, they are subsheaves of $j_*\CO_T$ for $j \colon T \hookrightarrow X$ the inclusion of the torus. Hence, the sequence of global sections~(\ref{equation:sequenceOfGlobalSections}) determines a sequence of sheaves:
	\begin{equation}
	0 \to \CO_X(\nabla_I) \to \dots  \to \underset{i \in I}{\oplus} \CO_X(\nabla_i) \to \CO_X(\nabla) \to 0.
	\end{equation}
	For a cone $\sigma \in \Sigma$ and a lattice point $m \in M$, the evaluation subcomplex $\CC^{F^\sigma_S}_\kbb(m)$ includes $e_{I'}$ for $I' \subseteq I$ if and only if $\chi^m \in \Gamma(U_\sigma, \CO_X(\nabla_{I'}))$.
	The sequence of sections over $U_\sigma \subseteq X$  therefore corresponds to the direct sum of the evaluation subcomplexes $\CC^{F_S^\sigma}_\kbb(m)$ for $m \in M$.
	By Proposition~\ref{prop-stayExact} this sequence is exact for each $\sigma \in \Sigma$.
	Hence, the restriction of the sequence of sheaves to each affine chart of the covering $\{U_\sigma\}_{\sigma \in \Sigma}$ is exact and consequently the constructed sequence is exact.
\end{proof}

\section{Displaying $\gExt^1$}
\label{section:displayingExt}
Throughout this section let $X \coloneqq \PP(\Delta)$ be the toric variety over $\bC$ associated to the lattice polyhedron $\Delta \subseteq M_\bR$, whose normal fan $\Sigma \coloneqq \normal(\Delta)$ has convex support of full dimension. 
The (possibly trivial) tail cone of $\Delta$ is \mbox{$\delta \coloneqq \tail(\Delta) = |\Sigma|^\vee$}. 
Let $\Delta^+, \Delta^- \in \Pol^+(\Sigma)$ be lattice polyhedra compatible with $\Sigma$, that is, their normal fans are refined by $\Sigma$. 
These polyhedra correspond to $T$-invariant nef Cartier divisors $D^+ = D_{\Delta^+}$ and $D^- = D_{\Delta^-}$ on $X$. 
We study the space of extensions
\begin{equation}
\gExt(\Delta^-, \Delta^+) \cong \gExt^1(\Delta^-, \Delta^+) \coloneqq \gExt^1(\CO_X(\Delta^-), \CO_X(\Delta^+)),
\end{equation}
that is, extensions of the line bundle $\CO_X(\Delta^-)$ by the line bundle $\CO_X(\Delta^+)$.
More specifically, we study $T$-equivariant extension sequences. These are elements in $\gExt(\Delta^-, \Delta^+)_0$. To understand this space we start with specific extension sequences induced by inclusion/exclusion sequences of polyhedra, such as the sequence considered in the Introduction~(\ref{subsection:exampleF1}). For an $n$-dimensional $\gExt\big(\Delta^-, \Delta^+\big)_0$ all extensions are encoded in a single sequence of the form 
$
0 \to \CO_X(\Delta^+)^n \to \CH \to \CO_X(\Delta^-) \to 0.
$
We will show that this is the universal extension sequence for  $\gExt\big(\Delta^-, \Delta^+\big)_0$.
First, we show that it can be constructed from the exact sequence of sheaves associated to a $\Sigma$-family (Theorem~\ref{theorem:SigmaFamilyGivesExactSequenceOfSheaves}). 
We then trace the sequence along the identifications:
\begin{align*} \label{identifications}
\gExt\big(\Delta^-, (\Delta^+)^n\big) 
& \overset{(1)}{\cong} \gExt\big(\CO_X, \CO_X(\Delta^+ - \Delta^-)^n\big) \\ 
& \overset{(2)}{=} \gH^1\big(X, \CO_X(\Delta^+ - \Delta^-)\big)^n \\
& \overset{(3)}{\cong} \big(\oplus_{m \in M} \tH^0(\Delta^- \setminus (\Delta^+ - m))\big)^n.
\end{align*}
Note that all of the above groups are $M$-graded and all identifications respect these $M$-gradings. 
Since we consider equivariant extensions, we will obtain an $n$-tuple of elements in $\gH^1(X, \CO_X(\Delta^+ - \Delta^-))_0 \cong \tH^0(\Delta^- \setminus \Delta^+)$, without an integral shift.

\subsection{Inclusion of Polyhedra}
\label{subsection:inclusionOfPolyhedra}
Since we are relating $\gExt\big(\CO_X(\Delta^-), \CO_X(\Delta^+)\big)_0$ to the reduced singular cohomology group $\tH^0(\Delta^- \setminus \Delta^+)$, the easiest case is to assume $\Delta^+$ to be contained in $\Delta^-$.
The inclusion/exclusion sequence of polyhedra is obtained by covering $\Delta^-$ by certain polyhedra $\nabla_i$ that intersect (pairwise) in $\Delta^+$. These $\nabla_i$ are obtained by taking the unions of the connected components $C_0, \dots, C_n$ of the set-theoretic difference $\Delta^-\setminus \Delta^+$ with $\Delta^+$, that is,
$\nabla_i \coloneqq C_i \cup \Delta^+, \, i = 0, \dots, n.$

\begin{proposition}
	\label{proposition:nablaAreCompatiblePolyhedra}
	For two lattice polyhedra $\Delta^+ \subseteq \Delta^- \in \Pol^+(\Sigma)$ and each connected component $C = C_i$ of $\Delta^- \setminus \Delta^+$, the union $\nabla = C \cup \Delta^+$ is again a lattice polyhedron.
\end{proposition}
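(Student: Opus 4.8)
\emph{Overview of the plan.} I would not try to check convexity of $\nabla = C \cup \Delta^+$ by hand; instead I would produce $\nabla$ explicitly as an intersection of half‑spaces taken from the facet presentations of $\Delta^+$ and $\Delta^-$, and then argue that each vertex of that polyhedron is already a vertex of $\Delta^+$ or of $\Delta^-$, hence a lattice point. Convexity then comes for free.

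\emph{Setting up the combinatorics.} First I would pass to $\operatorname{aff}(\Delta^-)$ with its induced lattice — legitimate, since $C$ is relatively open in $\Delta^-$ and so shares its affine hull — so that $\Delta^-$, and hence $\nabla$, may be assumed full‑dimensional. Write $\Delta^+ = \Delta^- \cap \bigcap_{j=1}^{s} H_j^+$ with $H_j^+ = \{m \in M_\bR \kst \mul{m}{v_j} \ge -\lambda_j\}$ the facet half‑spaces of $\Delta^+$, and set $C_j \coloneqq \{m \in \Delta^- \kst \mul{m}{v_j} < -\lambda_j\}$. Each $C_j$ is convex, hence connected, and $\Delta^- \setminus \Delta^+ = \bigcup_j C_j$, so every nonempty $C_j$ sits in exactly one connected component of $\Delta^- \setminus \Delta^+$. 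For the given component $C$ I would put $V \coloneqq \{j \kst C_j \neq \emptyset,\ C_j \subseteq C\}$, so that $C = \bigcup_{j \in V} C_j$. A one‑line check — if a point of $C$ violated the $k$‑th inequality it would lie in $C_k\neq\emptyset$, forcing $C_k\subseteq C$ and $k\in V$ — then gives
\[
\nabla \;=\; C \cup \Delta^+ \;=\; \Delta^- \cap \bigcap_{j \notin V} H_j^+ .
\]
Discarding the redundant constraints (those $j\notin V$ with $C_j=\emptyset$ satisfy $\Delta^-\subseteq H_j^+$), keep $W \coloneqq \{j \notin V \kst C_j \neq \emptyset\}$, so $\nabla = \Delta^- \cap \bigcap_{j \in W} H_j^+$; its tail cone equals $\tail(\Delta^+) = |\Sigma|^\vee$, which is pointed, so $\nabla$ is a genuine polyhedron with vertices.

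\emph{Lattice‑ness.} Let $w$ be a vertex of $\nabla$. If no half‑space $H_j^+$, $j \in W$, is active at $w$, then $w$ is cut out by facet hyperplanes of $\Delta^-$ alone, hence is a vertex of $\Delta^-$ and lies in $M$. If instead $\mul{w}{v_j} = -\lambda_j$ for some $j \in W$, I would show $w \in \Delta^+$; then $w$, an extreme point of $\nabla$ lying in $\Delta^+$, is a vertex of $\Delta^+$ and again lies in $M$. To get $w \in \Delta^+$ one checks $\mul{w}{v_k} \ge -\lambda_k$ for every $k$: this is immediate for $k \in W$ and for $k \notin V \cup W$, so suppose $\mul{w}{v_k} < -\lambda_k$ for some $k \in V$. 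Pick $u \in C_j$ and slide slightly from $w$ towards $u$: the point $w_t = (1-t)w + tu$ lies in $\Delta^-$, has $\mul{w_t}{v_j} < -\lambda_j$ for $t \in (0,1]$, and has $\mul{w_t}{v_k} < -\lambda_k$ for small $t$; hence $w_t \in C_j \cap C_k \neq \emptyset$, forcing $C_j$ and $C_k$ into the same component — impossible since $C_k \subseteq C$ while $C_j \not\subseteq C$. This contradiction gives $w \in \Delta^+$ and completes the argument.

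\emph{Main obstacle.} I expect the genuine difficulty to be exactly this last step: upgrading ``$\nabla$ is a polyhedron'' to ``$\nabla$ is a \emph{lattice} polyhedron''. The half‑space description of $\nabla$ mixes facet hyperplanes of two different lattice polyhedra, and such intersections need not have lattice vertices; what rescues the statement is the dichotomy above — a vertex at which a facet hyperplane of $\Delta^+$ is active must actually lie inside $\Delta^+$ — and that rests on the bookkeeping identifying the component $C$ with the union of the convex pieces $C_j$, $j\in V$. (A cleaner packaging of that bookkeeping, or a triangulation of $\nabla$, would be the natural alternative route to the polyhedrality half.)
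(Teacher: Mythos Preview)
Your argument is correct and complete, but it follows a genuinely different route from the paper's. The paper proves convexity of $\nabla$ directly by the segment argument you explicitly chose to avoid: for $x,y\in\nabla$ the segment $\overline{xy}$ lies in $\Delta^-$, and one checks by cases (whether or not $\overline{xy}$ meets $\Delta^+$) that each piece outside $\Delta^+$ stays in the component $C$. For lattice-ness, the paper invokes a regular triangulation: one chooses heights on $\vertset(\Delta^+)$ and extends generically to $\vertset(\Delta^-)$ so as to obtain a triangulation of $\Delta^-$ restricting to one of $\Delta^+$, using only vertices of $\Delta^\pm$; then $\nabla=C\cup\Delta^+$ is a union of lattice simplices, so its vertices are lattice points.

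Compared with this, your half-space description $\nabla=\Delta^-\cap\bigcap_{j\notin V}H_j^+$ gives convexity and polyhedrality for free and yields an explicit facet presentation of $\nabla$ as a byproduct. Your vertex dichotomy (either no $H_j^+$ with $j\in W$ is active and $w\in\vertset(\Delta^-)$, or some such constraint is active and then $w\in\Delta^+$, hence $w\in\vertset(\Delta^+)$) is sharper than the triangulation argument and entirely self-contained: it avoids appealing to the existence of a regular triangulation of $\Delta^-$ that restricts to one of $\Delta^+$. The paper's approach, on the other hand, makes the geometry more visible and generalises immediately to other unions of simplices in the triangulation; it also does not need the bookkeeping with the index sets $V$ and $W$. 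Interestingly, your parenthetical remark that ``a triangulation of $\nabla$'' would be ``the natural alternative route'' is exactly what the paper does.
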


\begin{proof}
	{It is easy to see that $\nabla$ is closed.}
	We first show that $\nabla$ is convex.
	Assume that $x,y\in \nabla$.
	We know that $\ko{xy}\subseteq\Delta^-$, and this line segment might touch
	$\Delta^+$ or not. 
	If not, then $\ko{xy}\subseteq(\Delta^-\setminus\Delta^+)$.
	Since $\ko{xy}$ is connected, it then has to be contained in a single
	connected component of $\Delta^-\setminus\Delta^+$.
	Since $x\in \nabla = C \cup \Delta^+$, we know that $x\in C$.
	Hence, $\ko{xy}\subseteq C$.
	If $\ko{xy}\cap\Delta^+\neq \emptyset$, then this set is a closed subsegment
	$\ko{x'y'}\subseteq\ko{xy}$. In particular, the half open
	ends $\ko{xx'}$ and $\ko{y'y}$ (excluding $x'$ and $y'$) belong to
	$\Delta^-\setminus\Delta^+$, and the same argument as in the first case
	applies again: Since $x,y \in \nabla = C \cup \Delta^+$, both of these half open ends $\ko{xx'}$ and $\ko{y'y}$ lie in $C$. The segment $\ko{xy}$ thus lies in $\nabla = C \cup \Delta^+$.
	\\[1ex]
	Next we realize that vertices of $\nabla$ are vertices of $\Delta^+$ or of $\Delta^-$.
	We start with the case of $\Delta^+$ and $\Delta^-$ being compact. 
	Choose a generic regular triangulation induced from some map $\omega \colon \vertset(\Delta^+) \to \bR$ and then extend it generically to $\vertset(\Delta^-) \setminus \vertset(\Delta^+)$ with sufficiently independent heights. This yields a triangulation $\Delta^- = \bigcup_{i \in I} \Delta^i$ 
	that restricts to a triangulation $\Delta^+ = \bigcup_{j \in J \subseteq I} \Delta^j$ 
	that uses only the vertices of $\Delta^-$ and $\Delta^+$. In particular all $\Delta^i$ are lattice simplices.
	The union $\nabla = C \cup \Delta^+$ is a union of lattice simplices in $I$. In particular all of its vertices are lattice points.\\[1ex]
	If $\Delta^+$ and $\Delta^-$ are not compact but have the same tail cone $\delta$ we choose a half-space $H$ such that all vertices of $\Delta^+$ and $\Delta^-$ are contained in the interior of $H$. Then we can write $\Delta^+ = P^+ + \delta$ and $\Delta^- = P^- + \delta$ with $P^+ \coloneqq \Delta^+ \cap H$, $P^- \coloneqq \Delta^- \cap H$. The previous discussion applied to the polytopes $P^+$ and $P^-$, all of whose relevant vertices are integral (not necessarily those lying on the boundary of $H$, but they do not yield vertices of $C \cup \Delta^+$), yields that all vertices of $\nabla = C \cup \Delta^+$ are integral. 
\end{proof}

\begin{corollary}
	\label{corollary:componentsFormSigmaFamily}
	For $\Delta^+ \subseteq \Delta^- \in \Pol^+(\Sigma)$, the set
	$
	S \coloneqq \{\nabla_i \kst i \in \{0, \dots, n\} \eqqcolon I\}
	$
	with $\nabla_i \coloneqq C_i \cup \Delta^+$ for the connected components $C_0, \dots, C_n$ of $\Delta^- \setminus \Delta^+$ yields a $\Sigma'$-family in the sense of Definition \ref{def:SigmaFamily} for some refinement $\Sigma' \le \Sigma$.
\end{corollary}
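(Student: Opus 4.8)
The plan is to verify the two conditions of Definition~\ref{def:SigmaFamily} directly, after pinning down a suitable fan $\Sigma'$ refining $\Sigma$. The union is immediate: since the $C_i$ exhaust $\Delta^-\setminus\Delta^+$ we get $\bigcup_{i\in I}\nabla_i = \bigl(\bigcup_{i} C_i\bigr)\cup\Delta^+ = \Delta^-$, so $\nabla=\Delta^-$, which lies in $\Pol^+(\Sigma)$ by hypothesis. The point is the higher intersections. Distinct connected components of $\Delta^-\setminus\Delta^+$ are disjoint, and each $C_i$ is disjoint from $\Delta^+$, so for every $I'\subseteq I$ with $\#I'\ge 2$ one computes $\nabla_{I'}=\bigcap_{i\in I'}(C_i\cup\Delta^+)=\Delta^+$, while $\nabla_{\{i\}}=\nabla_i$ and $\nabla_\emptyset=\Delta^-$. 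Hence the only polyhedra occurring among the $\nabla_{I'}$ are $\Delta^-$, $\Delta^+$ and the $\nabla_i$; by Proposition~\ref{proposition:nablaAreCompatiblePolyhedra} each $\nabla_i$ is a (convex) lattice polyhedron, and $\Delta^\pm$ are lattice polyhedra by assumption. Only finitely many $C_i$ occur, as already presupposed in the statement; this is also visible from the triangulation produced in the proof of Proposition~\ref{proposition:nablaAreCompatiblePolyhedra}, which exhibits $\Delta^-$ as a finite union of simplices.

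Next I would record the tail cones. From $\Delta^+\subseteq\nabla_i\subseteq\Delta^-$ and $\tail(\Delta^+)=\tail(\Delta^-)=\delta=|\Sigma|^\vee$ one gets $\tail(\nabla_i)=\delta$, so every polyhedron in play has tail cone $\delta$ and each $\normal(\nabla_i)$ has support $\delta^\vee=|\Sigma|$. I would then take $\Sigma'$ to be the common refinement of $\Sigma$ and the finitely many fans $\normal(\nabla_i)$; since all these fans share the support $|\Sigma|$, their common refinement $\Sigma'$ is again a fan with $|\Sigma'|=|\Sigma|$ refining each of them, in particular $\Sigma'\le\Sigma$. Concretely one may realise it as $\Sigma'=\normal\bigl(\Delta+\nabla_0+\dots+\nabla_n\bigr)$, using that the normal fan of a Minkowski sum of polyhedra with a common tail cone is the common refinement of the individual normal fans; this also shows $\Sigma'$ is the normal fan of a lattice polyhedron.

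It then remains to check compatibility with $\Sigma'$. All of $\Delta^+,\Delta^-,\nabla_0,\dots,\nabla_n$ have tail cone $\delta=|\Sigma'|^\vee$. The normal fans of $\Delta^+$ and $\Delta^-$ are refined by $\Sigma$, hence by the finer fan $\Sigma'$; the normal fan of each $\nabla_i$ is refined by $\Sigma'$ by construction. Thus $\Delta^+,\Delta^-,\nabla_i\in\Pol^+(\Sigma')$, which is exactly conditions (1) and (2) of Definition~\ref{def:SigmaFamily} for the family $S$ with respect to $\Sigma'$.

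The genuine content sits entirely in Proposition~\ref{proposition:nablaAreCompatiblePolyhedra} (convexity and integrality of $\nabla_i$); the rest is the observation that all multiple intersections collapse to $\Delta^+$ together with routine tail-cone and refinement bookkeeping. The only point to handle with care is that the original $\nabla_i$ need not be compatible with $\Sigma$ itself — their inner normal fans may genuinely subdivide cones of $\Sigma$ — which is precisely why one passes to the refinement $\Sigma'$; if no $\nabla_i$ forces a subdivision, the construction simply returns $\Sigma'=\Sigma$.
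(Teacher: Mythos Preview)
Your proof is correct and follows essentially the same approach as the paper: invoke Proposition~\ref{proposition:nablaAreCompatiblePolyhedra} for the $\nabla_i$, observe that all intersections with $\#I'\ge 2$ collapse to $\Delta^+$, note the common tail cone forces the normal fans to share support $|\Sigma|$, and pass to a common refinement $\Sigma'$. You have simply spelled out more of the bookkeeping (the explicit union, the sandwich argument for $\tail(\nabla_i)$, and the Minkowski-sum realisation of $\Sigma'$) than the paper's terse proof does.
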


\begin{proof}
	By the previous Proposition \ref{proposition:nablaAreCompatiblePolyhedra} the $\nabla_i$ are lattice polyhedra. 
	The tail cone of each $\nabla_i$ is $|\Sigma|\dual$. 
	Hence, each normal fan $\normal(\nabla_i)$ has the same support as $\Sigma$. 
	Since we are only dealing with finitely many $\nabla_i$ there is a common refinement $\Sigma'$ of $\Sigma$ and all $\normal(\nabla_i)$. 
	Then $\nabla_i \in \Pol^+(\Sigma')$. 
	The conditions for a $\Sigma'$-family follow from the assumptions on $\Sigma'$ and $\nabla_{I'} = \Delta^+ \in \Pol^+(\Sigma')$ for any $I' \subseteq I$ with $\#I' \ge 2$.
\end{proof}

\subsubsection{Refining the Fan}
\label{subsubsection:refiningTheFan}
In many cases the lattice polyhedra $\nabla_i$ built from the components of $\Delta^- \setminus \Delta^+$ will already be compatible with the fan $\Sigma$ we started with. We believe that if $\Delta^+$ and $\Delta^-$ are sufficiently ample, this will always be the case. However, in general, if the $\nabla_i$ are not compatible with $\Sigma$, we can refine $\Sigma$ to a fan $\Sigma'$ with the same support such that all $\nabla_i$ are compatible with $\Sigma'$, as in Corollary~\ref{corollary:componentsFormSigmaFamily}. 
This induces a proper birational toric morphism 
$
\pi \colon X' = \toric(\Sigma') \to \toric(\Sigma) = X.
$
{%
This morphism 
satisfies that 
$\pi_* \CO_{X'} = \CO_X$ and $R^\nu \pi_*\CO_{X'}= 0$ for all $\nu > 0$.%
}
The projection formula implies that $\pi_*\pi^*\CF = \CF$ for a locally free sheaf $\CF$ on $X$.
For a short exact sequence of sheaves on $X'$ of the form
\begin{equation}
0 \to \pi^*\CF_1 \to \CE \to \pi^*\CF_2 \to 0,
\end{equation}
derived pushforward $R\pi_*$ yields a long exact sequence of sheaves on $X$:
\begin{equation}
0 \to \underbrace{\pi_*\pi^*\CF_1}_{= \CF_1} \to \pi_*\CE \to \underbrace{\pi_*\pi^*\CF_2}_{= \CF_2} \to \underbrace{R^1\pi_*\pi^*\CF_1}_{=\CF_1 \otimes R^1\pi_*\CO_{X'} = 0} \to \dots,
\end{equation}
which turns out to be a short exact sequence by the derived version of 
the projection formula.
{%
In particular, the vanishing $R^\nu\pi_*\pi^*\CF_j=0$ for $\nu\geq 1$
implies $R^\nu\pi_*\CE=0$, too.
Hence, using both vanishings,}
we have for all $i \ge 0$ and for $j = 1, 2$ the isomorphisms
\begin{equation}
\gH^i(X', \CE) \cong \gH^i(X, \pi_*\CE) \text{ and } \gH^i(X', \pi^*\CF_j) \cong \gH^i(X, \pi_*\pi^*\CF_j) = \gH^i(X, \CF_j).
\end{equation}
In the study of 
$
\gExt(\Delta^-, \Delta^+) \cong \gH^1(X, \CO_X(\Delta^+ - \Delta^-)) \cong \gH^1(X', \CO_{X'}(\Delta^+ - \Delta^-))
$
we can therefore pull back and push foward along $\pi$ without impacting the extension or cohomology classes and groups. 
In the following, we will assume for simplicity that the fan $\Sigma$ is already refined enough so that all  $\nabla_i$ are compatible with it.

\subsubsection{Two Components}
\label{subsubsection:twoComponents}
We dive into Theorem \ref{theorem:SigmaFamilyGivesExactSequenceOfSheaves} in the case where $\Delta^- \setminus \Delta^+$ consists of two components $C_0$ and $C_1$ and we assume $\nabla_0 = C_0 \cup \Delta^+$ and $\nabla_1 = C_1 \cup \Delta^+$ to be compatible with the fan $\Sigma$. The complex $\CC_\kbb^{F_S}$ associated to \mbox{$S = \{\nabla_0, \nabla_1\}$} is
\begin{equation}
0 \to \bC \cdot e_I \xrightarrow{\begin{pmatrix} -1 \\ 1 \end{pmatrix}} \bC \cdot e_0 \oplus \bC \cdot e_1 \xrightarrow{\begin{pmatrix} 1 & 1 \end{pmatrix}} \bC \cdot e_\emptyset \to 0.
\end{equation}
By Theorem \ref{theorem:SigmaFamilyGivesExactSequenceOfSheaves} the $C_\kbb^{F_S}(m)$ for $m \in M$ induce a $T$-equivariant exact sequence 
\begin{equation}\label{equation:extensionSequenceTwoComponents}
0 \to \CO_X(\Delta^+) \xrightarrow{} \CO_X(\nabla_0) \oplus \CO_X(\nabla_1) \xrightarrow{} \CO_X(\Delta^-) \to 0.
\end{equation}
\begin{theorem}
	\label{theorem:ExtComp}
	Via the identification
	$
	\gExt \big(\CO_X(\Delta^-),\CO_X(\Delta^+)\big)_0=
	\tH^{0}\big(\Delta^- \setminus\Delta^+\big),
	$
	the exact extension sequence~(\ref{equation:extensionSequenceTwoComponents})
	corresponds to the reduced singular $0$-th cohomology class $[C_1] \in \tH^0\big(\Delta^- \setminus \Delta^+\big)$, which is equal to the class $-[C_0] \in \tH^0\big(\Delta^- \setminus \Delta^+\big)$.
\end{theorem}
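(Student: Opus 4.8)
The plan is to make the identification $\gExt^1(\CO_X(\Delta^-),\CO_X(\Delta^+))_0 \cong \tH^0(\Delta^-\setminus\Delta^+)$ completely explicit on the level of cocycles, and then to compute the image of the extension class of~(\ref{equation:extensionSequenceTwoComponents}) under it. First I would recall the chain of isomorphisms listed just before \S\ref{subsection:inclusionOfPolyhedra}: twisting by $\CO_X(-\Delta^-)$ gives $\gExt^1(\CO_X(\Delta^-),\CO_X(\Delta^+))\cong \gH^1(X,\CO_X(\Delta^+-\Delta^-))$, and taking the degree-$0$ part gives $\tH^0(\Delta^-\setminus\Delta^+)$ via \cite{immaculate}, \cite{dop}. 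The key point is to track how the connecting homomorphism in the long exact $\Hom(-,\CO_X(\Delta^+))$ sequence attached to~(\ref{equation:extensionSequenceTwoComponents}) is realized in \v{C}ech cohomology on the covering $\{U_\sigma\}_{\sigma\in\Sigma}$, and then to identify that \v{C}ech class with a reduced-cohomology class on $\Delta^-\setminus\Delta^+$.

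Concretely, I would work in degree $m=0$ throughout. The sequence~(\ref{equation:extensionSequenceTwoComponents}) twisted by $\CO_X(-\Delta^-)$ becomes $0\to\CO_X(\Delta^+-\Delta^-)\to \CO_X(\nabla_0-\Delta^-)\oplus\CO_X(\nabla_1-\Delta^-)\to\CO_X\to 0$, and the extension class is the image of $1\in\gH^0(X,\CO_X)_0$ under the connecting map $\delta\colon \gH^0(X,\CO_X)\to \gH^1(X,\CO_X(\Delta^+-\Delta^-))$. To compute $\delta(1)$ in \v{C}ech terms: on each chart $U_\sigma$, lift the section $1$ of $\CO_X$ to the local splitting $\CO_X(\nabla_0-\Delta^-)\oplus\CO_X(\nabla_1-\Delta^-)$; since $\nabla_0\cup\nabla_1=\Delta^-$, the constant function $1$ lies in $\Gamma(U_\sigma,\CO_X(\nabla_\epsilon-\Delta^-))$ precisely when the vertex of $\Delta^-$ cut out by $\sigma$ lies in $\nabla_\epsilon$, i.e. in $C_\epsilon\cup\Delta^+$. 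One then forms the difference of two such local lifts on an overlap $U_\sigma\cap U_\tau$, landing in $\CO_X(\Delta^+-\Delta^-)$, and this \v{C}ech $1$-cocycle is by construction supported exactly where the local vertex choices on $\sigma$ and $\tau$ lie in \emph{different} components. Matching this against the combinatorial description of $\tH^0(\Delta^-\setminus\Delta^+)$ in terms of which vertices of $\Delta^-$ lie in which $C_i$ shows that $\delta(1)$ is the class of the indicator of $C_1$ (with sign dictated by the ordering $I=\{0<1\}$ used in $\CC^{F_S}_\bullet$ and hence by the map $\begin{pmatrix}1&1\end{pmatrix}$ versus $\begin{pmatrix}-1\\1\end{pmatrix}$). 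Finally, in reduced $0$-th cohomology of the two-component space $\Delta^-\setminus\Delta^+ = C_0\sqcup C_1$ we have $[C_0]+[C_1]=[\text{pt-class}]=0$, so $[C_1]=-[C_0]$, giving the last assertion for free.

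The main obstacle I expect is bookkeeping the signs and the precise dictionary between the \v{C}ech $1$-cocycle and a representative of a class in $\tH^0$: the isomorphism $\gH^1(X,\CO_X(\Delta^+-\Delta^-))_0\cong\tH^0(\Delta^-\setminus\Delta^+)$ coming from \cite{immaculate}/\cite{dop} passes through a local cohomology / simplicial-complex computation on the fan, and one must check that the particular cocycle produced by the splitting of~(\ref{equation:extensionSequenceTwoComponents}) is the one that corresponds to the component-indicator under that isomorphism, rather than merely being cohomologous to \emph{some} class. A clean way around the sign subtlety is to observe that $[C_0]+[C_1]$ is manifestly the class of a globally constant lift (the lift $1\in\CO_X$ itself), hence zero in the reduced group, which pins down $[C_1]=-[C_0]$ independently of orientation conventions; then only the \emph{identity} of the class (that it is $\pm[C_1]$ and not, say, $0$) needs the nonsplitness of~(\ref{equation:extensionSequenceTwoComponents}), which follows since $\gExt^1(\Delta^-,\Delta^+)_0$ is one-dimensional (as in the $\F_1$ example) and the sequence is visibly non-split because the middle term is a sum of line bundles not containing $\CO_X(\Delta^-)$ as a direct summand.
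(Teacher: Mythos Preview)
Your main approach (the first two paragraphs) is correct and is essentially the same as the paper's: twist by $\CO_X(-\Delta^-)$, realize the extension class as $\delta(1)$ for the connecting map, compute the \v Cech $1$-cocycle on the cover $\{U_\sigma\}$ by lifting $1$ locally according to which component the vertex $m_\sigma$ of $\Delta^-$ lies in, and then identify this cocycle with the indicator of $C_1$ under the isomorphism from \cite{dop}. The paper makes the step you flagged as ``the main obstacle'' precise by using the explicit \v Cech complex $\CC^\kbb(\Delta^-,\CS)$ from \cite[(3.4)]{dop}, which computes the relative cohomology $\gH^\kbb(\Delta^-,\Delta^-\setminus\Delta^+)$, together with the long exact sequence of the pair $(\Delta^-,\Delta^-\setminus\Delta^+)$ to lift $\eta$ to $\gH^0(\Delta^-\setminus\Delta^+)$; this is exactly the ``precise dictionary'' you were worried about.

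Your fallback argument in the last paragraph, however, does not close the gap. Even granting that $\gExt^1(\Delta^-,\Delta^+)_0$ is one-dimensional in the two-component case, the claim that the sequence is ``visibly non-split because the middle term is a sum of line bundles not containing $\CO_X(\Delta^-)$ as a direct summand'' is not a proof: a priori $\CO_X(\nabla_0)\oplus\CO_X(\nabla_1)$ could admit a different splitting with $\CO_X(\Delta^-)$ as a summand, and ruling this out requires an argument (e.g.\ via $\gH^0$ or Picard classes). More importantly, even if non-splitting is established, this only shows the class is $\pm[C_1]$ and does not determine the sign asserted in the theorem; the sign is tied to the ordering convention on $I$ and the choice of isomorphism, and the paper fixes it by the explicit cocycle computation. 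So the fallback should be dropped and the dictionary via \cite{dop} carried through, as the paper does.
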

\begin{proof}
	We follow the steps mentioned at the start of section \ref{section:displayingExt}. We \hyperref[identifications]{first} tensor the sequence~(\ref{equation:extensionSequenceTwoComponents})
	with $\CO_X(\Delta^-)^{-1}$ and set $\CL \coloneqq \CO_X(\Delta^+ - \Delta^-)$ and $\CE_i \coloneqq \CO_X(\nabla_i - \Delta^-)$ for $i = 0, 1$ to obtain an extension sequence
	\begin{equation}\label{equation:tensoredSequenceTwoComponents}
	0 \to \CL \xrightarrow{} \CE_0 \oplus \CE_1 \xrightarrow{} \CO_X \to 0
	\end{equation}
	in $\gExt(\CO_X, \CL)_0$.
	Consider the associated long exact sequence in cohomology:
	\begin{equation}\label{equation:longExactSequenceSheafCohomology}
	0 \to \Gamma(X, \CL) \to \Gamma(X, \CE_0) \oplus \Gamma(X, \CE_1) \to \Gamma(X, \CO_X) \xrightarrow[1 \mapsto \eta]{d} \gH^1(X, \CL) \to \dots
	\end{equation}
	The image of the extension sequence~(\ref{equation:extensionSequenceTwoComponents}) in $\gH^1(X, \CL)$ under \hyperref[identifications]{identification (2)} is the image $\eta$ of $1 \in \Gamma(X, \CO_X)$ under $d$.
	Note that $\eta \in \gH^1(X, \CL)_0$ is of degree $0$. \\[1ex]
	To understand \hyperref[identifications]{identification (3)} we translate $\eta \in \gH^1(X, \CL)$ to an element in the \v{C}ech cohomology group $\check{\gH}^1(\CU, \CL)$ for $\CU = \{U_{\sigma_i}\}_{\sigma_i \in \Sigma_{\max}}$ with $U_{\sigma_i} = \Spec \bC [\sigma_i\dual \cap M]$ the standard toric affine open covering of $X$.
	The long exact sequence in cohomology~(\ref{equation:longExactSequenceSheafCohomology}) corresponds to a long exact sequence of \v{C}ech cohomology groups
	\begin{equation}\label{equation:longExactSequenceCechCohomology}
	0 \to \check{\gH}^0(\CU, \CL) \to \check{\gH}^0(\CU, \CE_0) \oplus \check{\gH}^0(\CU, \CE_1) \to \check{\gH}^0(\CU, \CO_X) \xrightarrow[1 \mapsto \eta]{d} \check{\gH}^1(\CU, \CL) \to \dots
	\end{equation}
	 Order the maximal cones $\sigma \in \Sigma_{\max}$ so that $1 \in \CE_0(U_i)$ for $i \in \{0, \dots, l\}$ and $1 \in \CE_1(U_i)$ for $i \in \{l+1, \dots, m\}$. {This is possible because the map of sheaves 
$\CE_0 \oplus \CE_1 \xrightarrow{} \CO_X$ is surjective, each 
$U_i:=U_{\sigma_i}$ is affine,
and everything is $T$-equivariant. Note that this is not a dichotomy;
it might happen that
$1 \in \CE_0(U_i)\cap\CE_1(U_i)$ for some $i$. Using this order,}
	the boundary homomorphism $d$ the element \mbox{$(1, \dots, 1) \in \prod_i \CO_X(U_i)$} maps to the class $\eta \in \check{\gH}^1(\CU, \CL)$ of 
	\begin{equation}\label{equation:elementOfChechComplex}
	(\underbrace{0, \dots, 0}_{i < j \le l}, \underbrace{1, \dots, 1}_{i \le l < j}, \underbrace{0, \dots, 0}_{l < i < j}) \in \prod_{i < j} \CL(U_{ij}).
	\end{equation}
	Understanding $\eta$ in terms of \v{C}ech cohomology is convenient because by \cite{dop} the degree $0$ part of the \v{C}ech complex $\CC^\kbb(\CU, \CL)$ giving $\check{\gH}^\kbb(\CU, \CL)$
	is the same as a \v{C}ech complex $\CC^\kbb(\Delta^-, \CS)$ defined in \cite[(3.4)]{dop} giving the relative singular cohomology $\gH^\kbb(\Delta^-, \Delta^- \setminus \Delta^+)$. 
	It is defined as 
	$\CC^p(\Delta^-, \CS) \coloneqq \prod_{i_0 < \dots < i_p} \gH^0(\Delta^-, S(\sigma_{i_0} \cap \dots \cap \sigma_{i_p}))$ 
	for
	$S(\sigma) \coloneqq \Delta^- \setminus (\Delta^+ + \sigma^\vee).$
	Investigating the groups $\gH^0(U_\sigma, \CL)_0 = \gH^0(\Delta^-, S(\sigma))$ and $\gH^0(U_\sigma, \CE_i)_0$, $i = 0,1$, as in \cite[(3.2)]{dop}, one realizes that for $\sigma \in \Sigma_{\max}$ with $S(\sigma) \ne \emptyset$, $1 \in \CE_0(U_\sigma)$ implies $S(\sigma) \subseteq C_0$ and $1 \in \CE_1(U_\sigma)$ implies $S(\sigma) \subseteq C_1$. Here $C_0$ and $C_1$ denote the two connected components of $\Delta^- \setminus \Delta^+$.\\[1ex]
	The element $\eta \in \check{\gH}^1(\CU, \CL)_0 \cong \gH^1(\Delta^-, \Delta^-\setminus\Delta^+)$ 
	can be lifted to 
	$\gH^0(\Delta^- \setminus \Delta^+)$ by considering the long exact sequence of the pair \mbox{$(\Delta^-, \Delta^-\setminus\Delta^+)$}. 
	In terms of \v{C}ech complexes the boundary morphism \mbox{$\gH^0(\Delta^- \setminus \Delta^+) \xrightarrow{d} \gH^1(\Delta^-, \Delta^-\setminus\Delta^+)$} is given by the snake lemma.
	One shows that $\eta$ can be lifted to the cocycle which is $1$ on the component $C_1$ of \mbox{$\Delta^- \setminus \Delta^+$} and $0$ on the component $C_0$. It can also be lifted to the cocycle which is $0$ on the component $C_1$ and $-1$ on the component $C_0$. Modulo $\gH^0(\Delta^-)$ these cocycles are equivalent and we have found the image of sequence~(\ref{equation:extensionSequenceTwoComponents}) to be \mbox{$[C_1] = -[C_0] \in \tH^0(\Delta^- \setminus \Delta^+)$.}
\end{proof}

\subsubsection{More than Two Components}
\label{subsubsection:manyComp}
We now deal with the case where $\Delta^-\setminus\Delta^+$ consists of $n+1$ connected components $C_0,\ldots,C_n$ for some $n \ge 2$. Set $\nabla_i \coloneqq C_i \cup \Delta^+$ for $i = 0, \dots, n$ and note that $\bigcup_{i = 0}^n \nabla_i = \Delta^-$ and $\nabla_{I'} = \bigcap_{i \in I'} \nabla_i = \Delta^+$ for $I'\subseteq I$ of cardinality $k$ with $2 \le k \le n+1$. 
By subsection~(\ref{subsubsection:refiningTheFan}) we may assume that the set $S \coloneqq \{\nabla_i \kst i \in \{0, \dots, n\}\}$ forms a $\Sigma$-family. 
The exact sequence of sheaves induced from this $\Sigma$-family as in Theorem \ref{theorem:SigmaFamilyGivesExactSequenceOfSheaves} looks as follows:
\begin{equation}\label{equation:extensionSequenceMoreComponents}
0 \to \CO_X(\Delta^+) \to \dots \to \CO_X(\Delta^+)^{\binom{n+1}{2}} \to
\oplus_{i=0}^n \,\CO_X(\nabla_i) \to \CO_X(\Delta^-)\to 0.
\end{equation}
We will replace this sequence~(\ref{equation:extensionSequenceMoreComponents}) by a quasi-isomorphic short exact sequence. To this end, consider the exact Koszul complex $\bigwedge^\kbb\bC^{n+1} \otimes \CO_X(\Delta^+)$:
\begin{equation}\label{equation:KoszulComplex}
\color{darkgreen} 0 \to \CO_X(\Delta^+) \xrightarrow{\bar{d}_{n+1}} \dots \xrightarrow{\bar{d}_{3}} \CO_X(\Delta^+)^{\binom{n+1}{2}} \color{black} \xrightarrow{\bar{d}_{2}} \CO_X(\Delta^+)^{n+1} \xrightarrow{\bar{d}_{1}} \CO_X(\Delta^+) \to 0,
\end{equation}
where $\bar{d}_i = d_i \otimes \id_{\CO_X(\Delta^+)}$ for $d_i$ the $i$-th differential in $\bigwedge^\kbb\bC^{n+1}$.
The green part is identical to part of sequence~(\ref{equation:extensionSequenceMoreComponents}). 
We obtain a quasi-isomorphism 
{%
from sequence~(\ref{equation:extensionSequenceMoreComponents}):
$$
\xymatrix@R=4.5ex@C=1.7em{
0 \ar[r] & \CO_X(\Delta^+) \ar[r]       &  \dots \ar[r]  &
\CO_X(\Delta^+)^{\binom{n+1}{2}} \ar[r] & 
\oplus_{i=0}^n \,\CO_X(\nabla_i) \ar[r] & \CO_X(\Delta^-) \ar[r] &  0
\\
0 \ar[r] & 0 \ar[r] \ar@{<-}[u] & \dots \ar[r]  & \ker(\bar{d}_1) \ar[r] 
\ar@{<<-}[u] & \oplus_{i=0}^n \,\CO_X(\nabla_i) 
\ar[r] \ar@{=}[u] & \CO_X(\Delta^-)\ar@{=}[u] \ar[r] &  0. 
}
$$
}%
Denoting $K \coloneqq \ker(\bar{d}_1) = \ker(d_1) \otimes \CO_X(\Delta^+)$, we obtain a short exact sequence 
\begin{equation}\label{equation:inducedSESMoreComponentsWithKernel}
0 \to K \to \oplus_{i=0}^n \,\CO_X(\nabla_i) \to \CO_X(\Delta^-)\to 0.
\end{equation}
We now choose the set $\{e_i - e_0 \kst i \in \{1, \dots, n\}\}$ with respect to the standard basis $\{e_0, \dots, e_n\}$ for $\bC^{n+1}$ as a basis for $\ker(d_1)$. This induces an isomorphism $K \cong \CO_X(\Delta^+)^n$ under which sequence~(\ref{equation:inducedSESMoreComponentsWithKernel}) corresponds to the short exact sequence
\begin{equation}\label{equation:inducedSESMoreComponents}
0 \to \CO_X(\Delta^+)^{n} \to
\oplus_{i=0}^n \,\CO_X(\nabla_i) \to \CO_X(\Delta^-)\to 0,
\end{equation}
where the maps can be thought of as 
\begin{gather}
	\CO_X(\Delta^+)^{n}  \xrightarrow{\begin{pmatrix*} -1 & -1 & \dots & -1 \\ 1 & 0 & \dots & 0 \\ 0 & 1 & \dots & 0 \\ \vdots & \ddots  & \ddots & \vdots \\ 0 & \dots & 0 & 1 \end{pmatrix*}}  \oplus_{i=0}^n \,\CO_X(\nabla_i), \\
	\oplus_{i=0}^n \,\CO_X(\nabla_i)  \xrightarrow{\begin{pmatrix*} 1 & 1 & \hspace{0.6cm} \dots \hspace{0.6cm} & 1\end{pmatrix*}}  \CO_X(\Delta^-).
\end{gather}

\begin{theorem}
	\label{theorem:ExtMultipleComponents}
	Via the identification
	$
	\gExt\big(\CO_X(\Delta^-),\CO_X(\Delta^+)^n\big)_0=
	\tH^{0}\big(\Delta^- \setminus\Delta^+\big)^n,
	$
	the extension sequence~(\ref{equation:inducedSESMoreComponents})
	corresponds to the $n$-tuple
	$([C_1],\ldots,[C_n]) \in \tH^{0}\big(\Delta^- \setminus\Delta^+\big)^n$ of reduced singular $0$-th cohomology classes.
\end{theorem}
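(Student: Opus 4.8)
The plan is to reduce the multi-component case to repeated application of Theorem~\ref{theorem:ExtComp}, exploiting the fact that the extension sequence~(\ref{equation:inducedSESMoreComponents}) splits, as a direct sum in the category of short exact sequences, into $n$ two-term pieces. Concretely, for each $i \in \{1, \dots, n\}$ I would consider the $i$-th coordinate projection $\CO_X(\Delta^+)^n \to \CO_X(\Delta^+)$ and push the sequence~(\ref{equation:inducedSESMoreComponents}) out along it. Reading off the explicit matrices displayed just before the theorem, the $i$-th component of the left-hand map sends the generator of the $i$-th copy of $\CO_X(\Delta^+)$ to $e_i - e_0 \in \oplus_j \CO_X(\nabla_j)$; hence the pushout along the $i$-th projection is precisely the subsequence
\begin{equation}
0 \to \CO_X(\Delta^+) \xrightarrow{\left(\begin{smallmatrix} -1 \\ 1 \end{smallmatrix}\right)} \CO_X(\nabla_0) \oplus \CO_X(\nabla_i) \to \CO_X(\Delta^+ \cup \nabla_0 \cup \nabla_i) \to 0,
\end{equation}
but $\nabla_0 \cup \nabla_i = C_0 \cup C_i \cup \Delta^+$, and since $C_0, C_i$ are two connected components of $\Delta^- \setminus \Delta^+$ this is exactly the two-component extension sequence~(\ref{equation:extensionSequenceTwoComponents}) for the pair $(\Delta^+, C_0 \cup C_i \cup \Delta^+)$, whose connected difference has exactly the two components $C_0$ and $C_i$.

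The second step is to keep track of the identification $\gExt(\CO_X(\Delta^-), \CO_X(\Delta^+)^n)_0 = \tH^0(\Delta^- \setminus \Delta^+)^n$ under these projections and under the restriction of the cohomology class along the inclusion $C_0 \cup C_i \cup \Delta^+ \hookrightarrow \Delta^-$. The class of an extension of $\CO_X(\Delta^-)$ by $\CO_X(\Delta^+)^n$ is an $n$-tuple of elements of $\gH^1(X, \CO_X(\Delta^+ - \Delta^-))_0$; pushing out along the $i$-th coordinate projection $\pi_i$ of $\CO_X(\Delta^+)^n$ simply extracts the $i$-th entry of that tuple. On the other hand, replacing $\Delta^-$ by $\nabla_0 \cup \nabla_i$ corresponds, via identification~(3) and the description $\gH^1(X, \CO_X(\Delta^+ - \Delta^-))_0 \cong \tH^0(\Delta^- \setminus \Delta^+)$, to the pullback map $\tH^0(\Delta^- \setminus \Delta^+) \to \tH^0\big((\nabla_0\cup\nabla_i)\setminus \Delta^+\big) = \tH^0(C_0 \sqcup C_i)$ induced by the inclusion of the pair of components; but the class $[C_i] \in \tH^0(\Delta^- \setminus \Delta^+)$ restricts to the generator $[C_i]$ of $\tH^0(C_0 \sqcup C_i)$. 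Since this restriction is injective on the span of $[C_0], \dots, [C_n]$ jointly over all choices of $i$ (a class in $\tH^0(\Delta^- \setminus \Delta^+)$ is determined by its values on all components, hence by the pairs $\{C_0, C_i\}$), it suffices by Theorem~\ref{theorem:ExtComp} to check that the $i$-th pushout of~(\ref{equation:inducedSESMoreComponents}) maps under that identification to $[C_i]$. By Step~1 that pushout is the two-component sequence for $\{C_0, C_i\}$, so Theorem~\ref{theorem:ExtComp} gives exactly the class $[C_i] = -[C_0]$ there. Assembling over $i = 1, \dots, n$ yields the $n$-tuple $([C_1], \dots, [C_n])$, as claimed.

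The main obstacle I anticipate is Step~1: verifying cleanly that pushing out the Koszul-derived short exact sequence~(\ref{equation:inducedSESMoreComponents}) along the $i$-th coordinate projection yields \emph{exactly} (not merely up to isomorphism with some unknown sign) the normalized two-component sequence~(\ref{equation:extensionSequenceTwoComponents}), with its cokernel correctly identified as $\CO_X(\nabla_0 \cup \nabla_i)$ rather than some larger polyhedron. This requires chasing the explicit matrix for the left-hand map together with the fact that, after pushout, the image of $\CO_X(\nabla_j)$ for $j \notin \{0, i\}$ collapses into the image of $\CO_X(\Delta^+)$ (because on $\nabla_j$ the relevant component is glued back to $\Delta^+$ through the relation $e_j = (e_j - e_0) + e_0$, and $e_j - e_0$ lies in the image of the $j$-th, not $i$-th, summand, so it dies under the $i$-th pushout together with those summands). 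Once this bookkeeping is pinned down, the rest is a formal consequence of Theorem~\ref{theorem:ExtComp} and the additivity of $\gExt$ in direct sums. A small secondary point, handled exactly as in Corollary~\ref{corollary:componentsFormSigmaFamily} and subsection~(\ref{subsubsection:refiningTheFan}), is that $\nabla_0 \cup \nabla_i$ need not be compatible with $\Sigma$; but one refines the fan and uses the invariance of $\gExt$ and cohomology under the resulting birational pushforward, so this causes no genuine difficulty.
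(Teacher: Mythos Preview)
Your reduction strategy is appealing, but Step~1 contains a genuine error: the pushout of the extension
\[
0 \to \CO_X(\Delta^+)^n \to \bigoplus_{j=0}^n \CO_X(\nabla_j) \to \CO_X(\Delta^-) \to 0
\]
along $\pr_i \colon \CO_X(\Delta^+)^n \to \CO_X(\Delta^+)$ is, by the very definition of pushout in $\gExt$, an extension of $\CO_X(\Delta^-)$ by $\CO_X(\Delta^+)$. The right-hand term does \emph{not} change under pushout; your displayed sequence with cokernel $\CO_X(\nabla_0 \cup \nabla_i)$ is therefore not the pushout. Concretely, $\CO_X(\nabla_0) \oplus \CO_X(\nabla_i)$ is too small to surject onto $\CO_X(\Delta^-)$ once $n \geq 2$, since it sees none of the components $C_j$ for $j \notin \{0,i\}$. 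What the pushout actually produces is some rank-two bundle $\CH_i$ sitting in $0 \to \CO_X(\Delta^+) \to \CH_i \to \CO_X(\Delta^-) \to 0$, as in Corollary~\ref{cor:singleExtensions}.

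Your argument can be repaired by additionally pulling back along the inclusion $\CO_X(\nabla_0 \cup \nabla_i) \hookrightarrow \CO_X(\Delta^-)$; one can then check that the combined pushout--pullback of~(\ref{equation:inducedSESMoreComponents}) is the two-component sequence~(\ref{equation:extensionSequenceTwoComponents}) for the pair $(\Delta^+, \nabla_0 \cup \nabla_i)$. But then Step~2 acquires a new obligation: you must verify that the contravariant $\gExt$ map induced by $\CO_X(\nabla_0 \cup \nabla_i) \hookrightarrow \CO_X(\Delta^-)$ corresponds, under identification~(3) of \cite{dop}, to the topological restriction $\tH^0(\Delta^- \setminus \Delta^+) \to \tH^0(C_0 \sqcup C_i)$. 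This is plausible but requires tracing through the \v{C}ech description in \cite{dop}, and is not a formality. By contrast, the paper's own proof simply reruns the \v{C}ech computation of Theorem~\ref{theorem:ExtComp} directly for the $(n+1)$-component sequence, tracking $n$-tuples instead of singletons; this avoids the functoriality check entirely and is shorter, at the cost of being less structural.
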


\begin{proof}
	The steps in this case are very similar to the case of two components. 
	{The extension sequence
	$$
	0 \to \CL^n \rightarrow
	 \oplus_{i = 0}^{n} \CE_i \rightarrow 
	 \CO_X \to 0
	$$
	corresponds to the image $\eta$ of $1 \in \Gamma(X, \CO_X)$ in $\gH^1(X, \CL^n) = \gH^1(X, \CL)^n$ under the long exact sequence in cohomology associated to the short exact extension sequence. In our case $\CL = \CO_X(\Delta^+ - \Delta^-)$ and $\CE_ i = \CO_X(\nabla_i - \Delta^-)$.
	Again, this image is understood using the sequence of \v{C}ech complexes, only looking at degree 0:
$$
\xymatrix@R=4.5ex@C=1.7em{
	& 0 \ar[d] & 0 \ar[d] & 0 \ar[d] & \\
	0 \ar[r] &
	\prod_{i}\CL(U_i)^n \ar[r] \ar@{->}[d] & 
	\prod_{i}(\oplus_{s = 0}^n \CE_s(U_i)) \ar[r] \ar@{->}[d] & 
	\prod_{i}\CO_X(U_i) \ar[r] \ar@{->}[d] & 0\\
	0 \ar[r] &	
	\prod_{i < j} \CL(U_{ij})^n \ar[r] \ar@{->}[d] &
	\prod_{i < j}(\oplus_{s=0}^n \CE_s(U_{ij})) \ar[r] \ar@{->}[d] &
	\prod_{i < j} \CO_X(U_{ij}) \ar[r] \ar@{->}[d] & 0\\
	0 \ar[r] &		
	\prod_{i < j < k} \CL(U_{ijk})^n \ar[r] \ar[d] &
	\prod_{i < j < k}(\oplus_{s=0}^n \CE_s(U_{ijk})) \ar[r] \ar[d] &
	\prod_{i < j < k} \CO_X(U_{ijk}) \ar[r] \ar[d] & 0\\
	& \vdots & \vdots & \vdots &
}
$$
	We lift the element $(1, \dots, 1) \in \prod_i \CO_X(U_i)$ to $\prod_{i}(\oplus_{s = 0}^n \CE_s(U_i))$, then map it to $\prod_{i < j}(\oplus_{s = 0}^n \CE_s(U_ij))$ via the \v{C}ech differential and lift it to $\prod_{i < j} \CL(U_{ij})^n$.
	The resulting element $(\eta_{ij})_{i<j} \in \prod_{i < j} \CL(U_{ij})^n$ can be viewed as an element of $\prod_{i < j}\gH^0(\Delta^-,S(\sigma_i \cap \sigma_j))^n$. The preimage of $[\eta]$ under the map $d^n$ in the long exact sequence
	$$
	0 \to \gH^0(\Delta^-, \Delta^- \setminus \Delta^+)^n \to \gH^0(\Delta^-)^n \to \gH^0(\Delta^- \setminus \Delta^+)^n \xrightarrow{d^n} \gH^1(\Delta^-, \Delta^- \setminus \Delta^+)^n \to 0.
	$$ is the desired element $([C_1],\ldots,[C_n]) \in \tH^{0}\big(\Delta^- \setminus\Delta^+\big)^n$.}
\end{proof}
Recall that $\gExt^1(\CO_{X}(\Delta^-), -)$ is a covariant functor, where a map of extensions is induced by a pushout.
\begin{corollary}
	\label{cor:singleExtensions}
	Given the extension sequence~(\ref{equation:inducedSESMoreComponents}) 
	in $\gExt\big(\CO_X(\Delta^-),\CO_X(\Delta^+)^n\big)_0$, we obtain $n$ extensions in $\gExt\big(\CO_X(\Delta^-),\CO_X(\Delta^+)\big)_0$,
	one for each $i = 1, \dots, n$:
	$$
	\xymatrix@R=4.5ex@C=1.7em{
		0 \ar[r] & 
		\CO_X(\Delta^+)^n \ar[r] \ar@{->>}[d]^{pr_i} & 
		\oplus_{i = 0}^n \CO(\nabla_i) \ar[r] \ar@{->}[d] &
		\CO_X(\Delta^-) \ar[r] \ar@{=}[d]& 0
		\\
		0 \ar[r] & 
		\CO_X(\Delta^+) \ar[r] & \CH_i \ar[r] &
		\CO_X(\Delta^-) \ar[r] & 0,
	}
	$$
	where $\CH_i$ is the pushout of the left square.\\
	Via the identification
	$
	\gExt\big(\CO_X(\Delta^-), \CO_X(\Delta^+)\big)_0 =
	\tH^{0}\big(\Delta^- \setminus\Delta^+\big),
	$
	the $i$-th extension 
	\begin{equation}
	0 \to
	\CO_X(\Delta^+) \to\CH_i \to
	\CO_X(\Delta^-) \to 0
	\end{equation}
	corresponds to the class $[C_i] \in \tH^0\big(\Delta^- \setminus\Delta^+\big)$.
	In particular, the $n$ extensions for $i \in \{1, \dots, n\}$ form a basis of $\gExt\big(\CO_X(\Delta^-),\CO_X(\Delta^+)\big)_0$.
\end{corollary}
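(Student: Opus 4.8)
The plan is to obtain the statement formally from Theorem~\ref{theorem:ExtMultipleComponents}, using only the functoriality of $\gExt^1\big(\CO_X(\Delta^-),-\big)$ and the naturality in the second variable of the chain of identifications recalled at the beginning of Section~\ref{section:displayingExt}. First I would recall that, for the covariant functor $\gExt^1\big(\CO_X(\Delta^-),-\big)$, the pushout of an extension along a morphism $\varphi$ of the kernel term is precisely the image of its class under the induced map on $\gExt^1$. Thus, denoting by $\xi\in\gExt\big(\CO_X(\Delta^-),\CO_X(\Delta^+)^n\big)_0$ the class of the exact sequence~(\ref{equation:inducedSESMoreComponents}), the $i$-th row of the diagram in the statement represents $(\pr_i)_*\xi$, where $\pr_i\colon\CO_X(\Delta^+)^n\twoheadrightarrow\CO_X(\Delta^+)$ is the $i$-th projection. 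Everything then reduces to the claim that, under the identification $\gExt\big(\CO_X(\Delta^-),\CO_X(\Delta^+)^n\big)_0=\tH^0(\Delta^-\setminus\Delta^+)^n$, the map $(\pr_i)_*$ becomes the $i$-th coordinate projection $\tH^0(\Delta^-\setminus\Delta^+)^n\to\tH^0(\Delta^-\setminus\Delta^+)$.

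To see this I would go through the three identifications $(1)$–$(3)$ from Section~\ref{section:displayingExt} and note that each is additive and natural in the second argument: $(1)$ tensoring with $\CO_X(\Delta^-)^{-1}$ is an exact equivalence that commutes with finite direct sums; $(2)$ is the additive functor $\gH^1(X,-)$ taken in $M$-degree $0$; and $(3)$ is the description of \cite{dop}, which is applied summand-by-summand and is compatible with $\gH^1\!\big(X,\CF^{\oplus n}\big)_0\cong\gH^1(X,\CF)_0^{\oplus n}$. Hence the induced map $(\pr_i)_*$ corresponds on the right-hand side to the $i$-th coordinate projection, and Theorem~\ref{theorem:ExtMultipleComponents}, which identifies $\xi$ with the tuple $([C_1],\dots,[C_n])$, gives that the $i$-th extension $0\to\CO_X(\Delta^+)\to\CH_i\to\CO_X(\Delta^-)\to 0$ represents $\pr_i\big([C_1],\dots,[C_n]\big)=[C_i]\in\tH^0(\Delta^-\setminus\Delta^+)$. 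For $n=1$ this simply re-reads Theorem~\ref{theorem:ExtComp}.

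For the final assertion I would argue purely topologically. Since $\Delta^-\setminus\Delta^+$ has the $n+1$ connected components $C_0,\dots,C_n$, the group $\gH^0(\Delta^-\setminus\Delta^+)$ is free on their indicator classes and $\tH^0(\Delta^-\setminus\Delta^+)$ is its quotient by the line spanned by $\sum_{i=0}^n[C_i]$; hence $\dim\tH^0(\Delta^-\setminus\Delta^+)=n$ and the only linear relation among $[C_0],\dots,[C_n]$ is $\sum_{i=0}^n[C_i]=0$. Therefore $\{[C_1],\dots,[C_n]\}$ is a basis of $\tH^0(\Delta^-\setminus\Delta^+)$, and by the previous paragraph the corresponding extensions $\CH_1,\dots,\CH_n$ form a basis of $\gExt\big(\CO_X(\Delta^-),\CO_X(\Delta^+)\big)_0$. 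The one genuinely delicate point in this argument is making the compatibility of the pushout with the $\tH^0$-identification fully rigorous while tracking the $M$-grading, i.e.\ checking that every step in $(1)$–$(3)$ is natural with respect to the diagonal inclusions and coordinate projections of $\CO_X(\Delta^+)^n$; everything else is bookkeeping.
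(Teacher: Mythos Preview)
Your proposal is correct and follows essentially the same approach as the paper: the paper's proof invokes exactly functoriality of $\gExt(\CO_X(\Delta^-),-)$ and naturality of the identifications, observes that $\pr_i$ induces the $i$-th coordinate projection, and then applies Theorem~\ref{theorem:ExtMultipleComponents}. Your write-up simply unpacks these steps in more detail, including the topological argument for the basis assertion that the paper leaves implicit.
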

\begin{proof}
	This follows from functoriality of $\gExt(\CO_X(\Delta^-), -)$ and naturality of the isomorphisms we identify along. The $i$-th projection $\CO_X(\Delta^+)^n \to \CO_X(\Delta^+)$ induces the $i$-th projection $\gExt\big(\CO_X(\Delta^-), \CO_X(\Delta^+)\big)_0^n \to \gExt\big(\CO_X(\Delta^-),\CO_X(\Delta^+)\big)_0$. By the previous Theorem \ref{theorem:ExtMultipleComponents} this yields $[C_i] \in \tH^0(\Delta^- \setminus \Delta^+)$.
\end{proof}
\begin{remark}
	To obtain $[C_0]$ one has to replace $\pr_i$ by $(-1, \dots, -1)$.
\end{remark}

\subsubsection{The Universal Extension}
\label{subsubsection:universalExtension}
For two $\CO_X$-modules $\CF$ and $\CG$ and their (finite-dimensional) space of extensions $E \coloneqq \gExt(\CF, \CG)$, a \emph{universal extension} is a short exact sequence of the form
$
0 \to \CG \to \CH \to \CF \otimes E \to 0
$ 
such that for any $t \in E$ the induced pullback sequence
is $t \in E = \gExt(\CF,\CG)$. Equivalently, it is a short exact sequence 
$
0 \to \CG \otimes E\dual \to \CH' \to \CF \to 0,
$
where $E\dual = \Hom_\bC(E, \bC)$, such that for any $t \in E$ the sequence
induced by the pushout along $t \in E \cong \Hom_\bC(E\dual, \bC)$ is $t \in E = \gExt(\CF,\CG)$.
Analogously one defines a \emph{universal $T$-equivariant extension} $0 \to \CG \to \CH \to \CF \otimes E_0 \to 0$ or $0 \to \CG \otimes E_0\dual \to \CH' \to \CF \to 0$ for $E_0 \coloneqq \gExt(\CF, \CG)_0$.

\begin{theorem}
	The extension sequence~(\ref{equation:inducedSESMoreComponentsWithKernel}) in $\gExt\big(\CO_X(\Delta^-), K \big)_0$ is a universal extension for $\gExt\big(\CO_X(\Delta^-), \CO_X(\Delta^+)\big)_0$.
\end{theorem}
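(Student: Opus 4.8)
The plan is to deduce the statement almost formally from Corollary~\ref{cor:singleExtensions}, once the sheaf $K$ has been canonically identified with $\CO_X(\Delta^+)\otimes_\bC E_0\dual$, where I write $E_0\coloneqq\gExt\big(\CO_X(\Delta^-),\CO_X(\Delta^+)\big)_0$. I will use the second form of a universal $T$-equivariant extension, namely a sequence $0\to\CG\otimes E_0\dual\to\CH'\to\CF\to0$ with $\CF=\CO_X(\Delta^-)$ and $\CG=\CO_X(\Delta^+)$. By Corollary~\ref{cor:singleExtensions} the classes $[C_1],\dots,[C_n]$ form a basis of $\tH^0(\Delta^-\setminus\Delta^+)=E_0$, so $\dim_\bC E_0=n$; let $[C_1]\dual,\dots,[C_n]\dual$ be the dual basis of $E_0\dual$. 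Recall that $K=\ker(d_1)\otimes\CO_X(\Delta^+)$ was already identified with $\CO_X(\Delta^+)^n$ by means of the basis $\{e_i-e_0\kst i=1,\dots,n\}$ of $\ker(d_1)$. Composing this with the map sending the $i$-th summand $\CO_X(\Delta^+)$ of $\CO_X(\Delta^+)^n$ to $\CO_X(\Delta^+)\otimes[C_i]\dual$ yields an isomorphism $K\xrightarrow{\sim}\CO_X(\Delta^+)\otimes_\bC E_0\dual$, under which the sequence~(\ref{equation:inducedSESMoreComponentsWithKernel}) acquires exactly the shape required of a universal extension, with $\CH'=\bigoplus_{i=0}^n\CO_X(\nabla_i)$.

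It then remains to check the universal property. Let $\xi\in\gExt\big(\CO_X(\Delta^-),\CO_X(\Delta^+)\otimes E_0\dual\big)_0$ be the class of this sequence. For $t\in E_0\cong\Hom_\bC(E_0\dual,\bC)$ the morphism $\id_{\CO_X(\Delta^+)}\otimes t\colon\CO_X(\Delta^+)\otimes E_0\dual\to\CO_X(\Delta^+)$ depends $\bC$-linearly on $t$, and since $\gExt^1(\CO_X(\Delta^-),-)$ is a $\bC$-linear functor, the pushout assignment $t\mapsto(\id_{\CO_X(\Delta^+)}\otimes t)_*\xi$ is a $\bC$-linear endomorphism of $E_0$; the universal property asserts precisely that it is the identity. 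Since $\{[C_i]\}$ is a basis it suffices to check this for $t=[C_i]$. With respect to the chosen dual basis of $E_0\dual$, the morphism $\id_{\CO_X(\Delta^+)}\otimes[C_i]$ is exactly the $i$-th projection $\pr_i\colon\CO_X(\Delta^+)^n\to\CO_X(\Delta^+)$, and by Corollary~\ref{cor:singleExtensions} the pushout of~(\ref{equation:inducedSESMoreComponentsWithKernel})---equivalently of~(\ref{equation:inducedSESMoreComponents})---along $\pr_i$ represents the class $[C_i]$. Hence $t\mapsto(\id\otimes t)_*\xi$ fixes the basis $\{[C_i]\}$ and therefore equals $\id_{E_0}$, which is the definition of a universal $T$-equivariant extension for $\gExt\big(\CO_X(\Delta^-),\CO_X(\Delta^+)\big)_0$.

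I expect the only genuine work to be bookkeeping: aligning the combinatorial basis $\{e_i-e_0\}$ of $\ker(d_1)$ used to present~(\ref{equation:inducedSESMoreComponents}) with the geometric basis $\{[C_i]\}$ of $E_0$, checking that $\id_{\CO_X(\Delta^+)}\otimes[C_i]$ really is $\pr_i$ under the two identifications (this uses the double-dual pairing $[C_j]\dual([C_i])=\delta_{ij}$), and recalling that the Yoneda pushout is additive and $\bC$-bilinear. All the substantive content---that the coordinate pushouts of~(\ref{equation:inducedSESMoreComponentsWithKernel}) are precisely the classes $[C_i]$ and that these are linearly independent---is already furnished by Theorem~\ref{theorem:ExtMultipleComponents} and Corollary~\ref{cor:singleExtensions}, so no new geometric argument is needed. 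In the boundary case $n=1$ of two components the assertion is just Theorem~\ref{theorem:ExtComp}: the sequence~(\ref{equation:extensionSequenceTwoComponents}) is a generator of the one-dimensional group $\gExt\big(\CO_X(\Delta^-),\CO_X(\Delta^+)\big)_0$.
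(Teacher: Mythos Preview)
Your proposal is correct and follows essentially the same route as the paper: identify $K=\ker(d_1)\otimes\CO_X(\Delta^+)$ with $\CO_X(\Delta^+)\otimes E_0\dual$ via the bases $\{e_i-e_0\}$ and $\{[C_i]\dual\}$, then verify the universal property on the basis $\{[C_i]\}$ using Corollary~\ref{cor:singleExtensions}. The only cosmetic difference is that the paper makes the identification $\ker(d_1)\cong E_0\dual$ more canonical by first matching $\bC^{n+1}$ with $\gH^0(\Delta^-\setminus\Delta^+)$ (so that $e_i-e_0\leftrightarrow C_i\dual-C_0\dual$), whereas you simply declare the isomorphism on bases; the subsequent verification is identical.
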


\begin{proof}
	The $\bC$-vector space $\gExt(\CO_X(\Delta^-), \CO_X(\Delta^+))_0 \cong \tH^0(\Delta^- \setminus \Delta^+)$ is the cokernel of the homomorphism 
	$
	\gH^0(\{\cdot\}) \to \gH^0(\Delta^- \setminus \Delta^+).
	$
	The components $C_0, \dots, C_n$ provide a natural basis for $\gH^0(\Delta^- \setminus \Delta^+)$ with dual basis $C_0\dual, \dots, C_n\dual$ of $\gH^0(\Delta^- \setminus \Delta^+)\dual$. 
	The quotient $\tH^0(\Delta^- \setminus \Delta^+) = \frac{\gH^0(\Delta^- \setminus \Delta^+)}{\gH^0(\{\cdot\})}$ is generated by $[C_0], \dots, [C_n]$ and subject to the relation $[C_0] + \cdots +  [C_n] = 0$. 
	We choose $\{C_1\dual - C_0\dual, \dots, C_n\dual - C_0\dual\}$ as a basis for the dual vector space $\tH^0(\Delta^- \setminus \Delta^+)\dual = \ker(\gH^0(\Delta^- \setminus \Delta^+)\dual \to \gH^0(\{\cdot\})\dual)$.
	This basis is dual to the basis $\{[C_1], \dots, [C_n]\}$ of $\tH^0(\Delta^- \setminus \Delta^+)$. 
	The isomorphism $\tH^0(\Delta^- \setminus \Delta^+)\dual \xrightarrow{\cong} \ker(d_1)$ maps the $i$-th basis element $C_i\dual - C_0\dual$ to the $i$-th basis element $e_i - e_0$. Recall that $K = \ker(d_1) \otimes \CO_X(\Delta^+)$. \\[1ex]
	We now check the universal property on the basis $\{[C_1], \dots, [C_n]\}$ of $\tH^0(\Delta^- \setminus \Delta^+)$. By definition of the dual and double dual, $[C_i] \in \tH^0(\Delta^- \setminus \Delta^+) \cong \Hom_\bC(\tH^0(\Delta^- \setminus \Delta^+)\dual, \bC)$ corresponds to the projection to the $i$-th coordinate \mbox{$\pr_i \colon \tH^0(\Delta^- \setminus \Delta^+)\dual \cong \bC^n \to \bC$}.
	By Corollary~\ref{cor:singleExtensions} the pushout along the $i$-th projection homomorphism gives an extension in $\gExt\big(\CO_X(\Delta^-), \CO_X(\Delta^+)\big)_0 \cong \tH^0(\Delta^- \setminus \Delta^+)$ which corresponds precisely to the basis element \mbox{$[C_i] \in \tH^0(\Delta^- \setminus \Delta^+)$.}
\end{proof}

\subsubsection{The Cremona Example}
\label{subsubsection:cremonaExample}
Let $X$ be the graph of the Cremona transformation $\PP^2\rato\PP^2$.
In toric language, $X= \PP(H) = \toric(\Sigma)$, where $\Sigma = \normal(H)$:
$$
\newcommand{\scaleA}{0.8}
\newcommand{\spaceA}{\hspace*{5em}}
\begin{tikzpicture}[scale=\scaleA]
\draw[color=oliwkowy!40] (-1.3,-1.3) grid (1.3,1.3);
\draw[thick, color=red]
(0,0) circle (3pt);
\draw[thick,  color=black]
(0.0,-2.3) node{$\Sigma=\normal(H)$};
\draw[thick,  color=black, ->]
(0,0) -- (-1.3,0);
\draw[thick,  color=black, ->]
(0,0) -- (1.3,0);
\draw[thick,  color=black, ->]
(0,0) -- (0,-1.3);
\draw[thick,  color=black, ->]
(0,0) -- (0,1.3);
\draw[thick,  color=black, ->]
(0,0) -- (-1.3,1.3);
\draw[thick,  color=black, ->]
(0,0) -- (1.3,-1.3);
\fill[pattern color=black!30, pattern=north west lines]
(0,0) -- (1,0) -- (1,1) -- (0,1);
\fill[pattern color=black!30, pattern=north east lines]
(0,0) -- (0,1) -- (-1,1) -- (0, 0);
\fill[pattern color=black!30, pattern=north east lines]
(0,0) -- (-1,1) -- (-1,0) -- (0, 0);
\fill[pattern color=black!30, pattern=north west lines]
(0,0) -- (0,-1) -- (-1,-1) -- (-1,0);
\fill[pattern color=black!30, pattern=north east lines]
(0,0) -- (0,-1) -- (1,-1) -- (0, 0);
\fill[pattern color=black!30, pattern=north east lines]
(0,0) -- (1,0) -- (1,-1) -- (0, 0);
\draw[thick,  color=black]
(1.5,-0.25) node{$\rho_1$} (-0.25,1.5) node{$\rho_2$} 
(-1.5,0.75) node{$\rho_3$} (-1.5,-0.25) node{$\rho_4$}
(-0.5,-1.25) node{$\rho_5$} (0.75,-1.25) node{$\rho_6$};
\end{tikzpicture}
\spaceA
\begin{tikzpicture}[scale=\scaleA]
\draw[color=oliwkowy!40] (-1.3,-1.3) grid (1.3,1.3);
\draw[thick, color=black]
(-1,-1) -- (0,-1) -- (1,0) -- (1,1) -- (0,1) -- (-1,0) -- (-1,-1);
\fill[pattern color=yellow!50, pattern=north west lines]
(-1,-1) -- (0,-1) -- (1,0) -- (1,1) -- (0,1) -- (-1,0) -- (-1,-1);
\fill[thick, color=black]
(0,0) circle (2pt) (1,0) circle (2pt) (0,1) circle (2pt)
(1,1) circle (2pt) (-1,0) circle (2pt) (0,-1) circle (2pt)
(-1,-1) circle (2pt);
\draw[thick, color=red]
(0,0) circle (3pt);
\draw[thick,  color=black]
(0.0,-2.3) node{$H=A+B$};
\draw[thick,  color=black]
(1.3,0.5) node{$F_4$} (0.5,1.3) node{$F_5$} 
(-0.8,0.8) node{$F_6$} (-1.3,-0.5) node{$F_1$}
(-0.5,-1.3) node{$F_2$} (0.8,-0.8) node{$F_3$};
\end{tikzpicture}
\spaceA
\begin{tikzpicture}[scale=\scaleA]
\draw[color=oliwkowy!40] (-0.3,-1.3) grid (1.3,0.3);
\draw[thick, color=black]
(0,0) -- (1,0) -- (0,-1) -- (0,0);
\fill[pattern color=blue!50, pattern=north west lines]
(0,0) -- (1,0) -- (0,-1) -- (0,0);
\fill[thick, color=black]
(0,0) circle (2pt) (1,0) circle (2pt) (0,-1) circle (2pt);
\draw[thick, color=red]
(0,0) circle (3pt);
\draw[thick,  color=black]
(0.5,-1.7) node{$A$};
\end{tikzpicture}
\spaceA
\begin{tikzpicture}[scale=\scaleA]
\draw[color=oliwkowy!40] (-1.3,-0.3) grid (0.3,1.3);
\draw[thick, color=black]
(0,0) -- (-1,0) -- (0,1) -- (0,0);
\fill[pattern color=darkgreen!50, pattern=north west lines]
(0,0) -- (-1,0) -- (0,1) -- (0,0);
\fill[thick, color=black]
(0,0) circle (2pt) (-1,0) circle (2pt) (0,1) circle (2pt);
\draw[thick, color=red]
(0,0) circle (3pt);
\draw[thick,  color=black]
(-0.5,-0.7) node{$B$};
\end{tikzpicture}
$$
Here, the ray $\rho_i$ of the inner normal fan $\Sigma$ corresponds to the facet $F_i$ of $H$. Let {$D_i \coloneqq \overline{\orb(\rho_i)}$}. The Minkowski summands $A$ and $B$ of $H$ correspond to the divisors $D_A = D_2 + D_3 + D_4$ and $D_B = D_1 + D_5 + D_6$.
The interesting line bundle on $X$ arises from 
$\Delta^+=A$ and $\Delta^-=2B+(1,-1)$ with 
$D_{\Delta^-} = D_1 + D_2 + 2D_3 + D_4 + D_5.$
The figure below displays the set-theoretic difference 
$(2B+(1,-1))\setminus A$. It shows that 
$
\gH^1(X,\CO_X(\Delta^+ - \Delta^-)) \cong \oplus_{m \in M} \tH^0(\Delta^- \setminus (\Delta^+ + m))
$ 
is $2$-dimensional in degree $m = 0$, because
$
\dim \tH^0(\Delta^- \setminus \Delta^+) = \dim \tH^0((2B + (1,-1)) \setminus A) = 2.
$
$$
\newcommand{\scaleA}{0.8}
\newcommand{\spaceA}{\hspace*{5em}}
\begin{tikzpicture}[scale=\scaleA]
\draw[color=oliwkowy!40] (-2.3,-0.3) grid (0.3,2.3);
\draw[thick, color=black]
(0,0) -- (-2,0) -- (0,2) -- (0,0);
\fill[pattern color=green!50, pattern=north west lines]
(0,0) -- (-2,0) -- (0,2) -- (0,0);
\draw[thick, color=black]
(-1,0) -- (0,1) -- (-1,1) -- (-1,0);
\fill[pattern color=blue!50, pattern=north west lines]
(-1,0) -- (0,1) -- (-1,1) -- (-1,0);
\fill[thick, color=black]
(0,0) circle (2pt) (-1,0) circle (2pt) (0,1) circle (2pt)
(-1,1) circle (2pt) (-2,0) circle (2pt) (0,2) circle (2pt);
\draw[thick, color=red]
(-1,1) circle (3pt);
\draw[thick,  color=black]
(-1.3,0.25) node{$C_0$} (-0.3,0.25) node{$C_1$} (-0.3,1.25) node{$C_2$};
\end{tikzpicture}
$$
There is no other shift $m$ such that $(2B -m) \setminus A$, or equivalently $2B \setminus (A +m)$,
has non-trivial reduced $0$-th cohomology. Hence, $\gH^1(X, \CO_X(\Delta^+ - \Delta^-))$ is $2$-dimensional, sitting completely in degree $m = 0$.\\[1ex]
Denote the connected components of $\Delta^- \setminus \Delta^+$ by $C_0$, $C_1$, $C_2$ and set $\nabla_i \coloneqq C_i \cup \Delta^+$.
$$
\newcommand{\scaleA}{0.8}
\newcommand{\spaceA}{\hspace*{5em}}
\begin{tikzpicture}[scale=\scaleA]
\draw[color=oliwkowy!40] (-2.3,-0.3) grid (0.3,2.3);
\draw[thick,  color=black]
(-1.5,1.5) node{$\nabla_0$};
\draw[thick, color=black]
(-2,0) -- (-1,1) -- (0,1) -- (-1,0) -- (-2,0);
\fill[pattern color=green!50, pattern=north west lines]
(-2,0) -- (-1,1) -- (0,1) -- (-1,0) -- (-2,0);
\fill[thick, color=black]
(-1,0) circle (2pt) (0,1) circle (2pt)
(-1,1) circle (2pt) (-2,0) circle (2pt);
\draw[thick, color=red]
(-1,1) circle (3pt);
\end{tikzpicture}
\spaceA
\begin{tikzpicture}[scale=\scaleA]
\draw[color=oliwkowy!40] (-2.3,-0.3) grid (0.3,2.3);
\draw[thick,  color=black]
(-1.5,1.5) node{$\nabla_1$};
\draw[thick, color=black]
(0,0) -- (-1,0) -- (-1,1) -- (0,1) -- (0,0);
\fill[pattern color=green!50, pattern=north west lines]
(0,0) -- (-1,0) -- (-1,1) -- (0,1) -- (0,0);
\fill[thick, color=black]
(0,0) circle (2pt) (-1,0) circle (2pt) (0,1) circle (2pt)
(-1,1) circle (2pt);
\draw[thick, color=red]
(-1,1) circle (3pt);
\end{tikzpicture}
\spaceA
\begin{tikzpicture}[scale=\scaleA]
\draw[color=oliwkowy!40] (-2.3,-0.3) grid (0.3,2.3);
\draw[thick,  color=black]
(-1.5,1.5) node{$\nabla_2$};
\draw[thick, color=black]
(-1,0) -- (-1,1) -- (0,2) -- (0,1) -- (-1,0);
\fill[pattern color=green!50, pattern=north west lines]
(-1,0) -- (-1,1) -- (0,2) -- (0,1) -- (-1,0);
\fill[thick, color=black]
(-1,0) circle (2pt) (0,1) circle (2pt)
(-1,1) circle (2pt) (0,2) circle (2pt);
\draw[thick, color=red]
(-1,1) circle (3pt);
\end{tikzpicture}
$$
From the facet presentations of $\nabla_0$, $\nabla_1$ and $\nabla_2$ we can read off the associated divisors
$D_{\nabla_0} = D_1 + D_2 + D_3 + D_4$,
$D_{\nabla_1} = D_2 + 2D_3 + D_4$,
$D_{\nabla_2} = D_2 + D_3 + D_4 + D_5$.
In~this case, sequence~(\ref{equation:inducedSESMoreComponents}) in subsection~(\ref{subsubsection:manyComp}) is the sequence
\begin{equation}
0  \to \CO_X(\Delta^+)^{2}
\xrightarrow{\begin{pmatrix*} -1 & -1 \\ 1 & 0 \\ 0 & 1 \end{pmatrix*}} 
\oplus_{i = 0}^2\CO_X(\nabla_i) 
\xrightarrow{\begin{pmatrix*} 1 & 1 & 1 \end{pmatrix*}} \CO_X(\Delta^-) \to 0
\end{equation}
in $\gExt\big(\CO_X(\Delta^-), \CO_X(\Delta^+)^2\big)_0$. It corresponds to \mbox{$([C_1], [C_2]) \in \tH^0(\Delta^- \setminus \Delta^+)^2$}.\\[1ex]
The pushout construction described in Corollary \ref{cor:singleExtensions} yields two extension sequences 
$0 \to \CO_X(\Delta^+) \to \CG_1 \to \CO_X(\Delta^-) \to 0$ and
$0 \to \CO_X(\Delta^+) \to \CG_2 \to \CO_X(\Delta^-) \to 0$
in $\gExt\big(\CO_X(\Delta^-), \CO_X(\Delta^+)\big)_0$. 
The first is represented by $[C_1] \in \tH^0(\Delta^- \setminus \Delta^+)$ and the second by $[C_2] \in \tH^0(\Delta^- \setminus \Delta^+)$. The class of the constant cocycle on $\Delta^- \setminus \Delta^+$ is trivial, so \mbox{$[C_0] = -[C_1] - [C_2] \in \tH^0(\Delta^- \setminus \Delta^+)$} and the negative of the Baer sum of the two sequences above is represented by $[C_0] \in \tH^0(\Delta^- \setminus \Delta^+)$.

\subsection{General Position of Polyhedra}
\label{subsection:generalPositionOfPolyhedra}
In the previous section we dealt with the special case of one polyhedron $\Delta^+$ being contained in the other polyhedron $\Delta^-$.  In this section we deal with $\Delta^+$ and $\Delta^-$ lying in general position.

\subsubsection{Observing Two Problems}\label{subsubsection:introducingTwoProblems}
To illustrate the problem for the general case, we start with
an example. Let $\Sigma_0$ be the $2$-dimensional, singular fan made 
from the rays spanned by $(1,0)$, $(0,1)$, $(-2,-1)$, and $(0,-1)$, respectively (displayed in black below).
Adding the rays spanned by $(-1,0)$ and $(-1,-1)$ (displayed in blue below), we obtain the smooth subdivision
$\Sigma$.
$$
\newcommand{\scaleA}{0.6}
\newcommand{\spaceA}{\hspace*{5em}}
\begin{tikzpicture}[scale=\scaleA]
\draw[color=oliwkowy!40] (-2.3,-2.3) grid (2.3,2.3);
\draw[thick, color=red]
(0,0) circle (3pt);
\draw[thick,  color=black]
(1.5,-1.5) node{$\Sigma_0$};
\draw[thick,  color=black, ->]
(0,0) -- (2.3,0);
\draw[thick,  color=black, ->]
(0,0) -- (0,-2.3);
\draw[thick,  color=black, ->]
(0,0) -- (0,2.3);
\draw[thick,  color=black, ->]
(0,0) -- (-2.3,-1.15);
\fill[pattern color=black!30, pattern=north east lines]
(0,0) -- (2,0) -- (2,-2) -- (0, -2);
\fill[pattern color=black!30, pattern=north west lines]
(0,0) -- (2,0) -- (2, 2) -- (0,2);
\fill[pattern color=black!30, pattern=north east lines]
(0,0) -- (-2,-1) -- (-2, 2) -- (0, 2);
\fill[pattern color=black!30, pattern=north west lines]
(0,0) -- (0,-2) -- (-2,-2) -- (-2, -1);
\fill[thick,  color=black]
(1,0) circle (2pt) (0,1) circle (2pt) (-2,-1) circle (2pt) 
(0,-1) circle (2pt);
\end{tikzpicture}
\spaceA
\begin{tikzpicture}[scale=\scaleA]
\draw[color=oliwkowy!40] (-2.3,-2.3) grid (2.3,2.3);
\draw[thick, color=red]
(0,0) circle (3pt);
\draw[thick,  color=black]
(1.5,-1.5) node{$\Sigma$};
\draw[thick,  color=black, ->]
(0,0) -- (2.3,0);
\draw[thick,  color=black, ->]
(0,0) -- (0,-2.3);
\draw[thick,  color=black, ->]
(0,0) -- (0,2.3);
\draw[thick,  color=black, ->]
(0,0) -- (-2.3,-1.15);
\draw[thick,  color=blue, ->]
(0,0) -- (-2.3,0);
\draw[thick,  color=blue, ->]
(0,0) -- (-2.3,-2.3);
\fill[pattern color=black!30, pattern=north east lines]
(0,0) -- (2,0) -- (2,-2) -- (0, -2);
\fill[pattern color=black!30, pattern=north west lines]
(0,0) -- (2,0) -- (2, 2) -- (0,2);
\fill[pattern color=black!30, pattern=north east lines]
(0,0) -- (-2,0) -- (-2, 2) -- (0, 2);
\fill[pattern color=black!30, pattern=north west lines]
(0,0) -- (-2,0) -- (-2, -1) -- (0, 0);
\fill[pattern color=black!30, pattern=north west lines]
(0,0) -- (-2, -1) -- (-2,-2) -- (0, 0);
\fill[pattern color=black!30, pattern=north west lines]
(0,0) -- (-2, -2) -- (0,-2) -- (0, 0);
\fill[thick,  color=black]
(1,0) circle (2pt) (0,1) circle (2pt) (-2,-1) circle (2pt) 
(0,-1) circle (2pt);
\fill[thick,  color=blue]
(-1,0) circle (2pt) (-1,-1) circle (2pt);
\end{tikzpicture}
$$
Denote $X:=\toric(\Sigma)$ and $f\colon X\to X_0:=\toric(\Sigma_0)$, the 
birational contraction. The Picard group $\Pic(X_0)$ is
freely generated by (the sheaves represented by) 
the polytopes $C$ and $D$ displayed below. $\Pic(X)=\Cl(X)$ 
has $\{A,B,C,D\}$ as a basis:
$$
\newcommand{\scaleA}{0.8}
\newcommand{\spaceA}{\hspace*{4em}}
\begin{tikzpicture}[scale=\scaleA]
\draw[color=oliwkowy!40] (-0.3,-0.3) grid (1.3,1.3);
\draw[thick, color=black]
(0,0) -- (1,0) -- (0,1) -- (0,0);
\fill[pattern color=blue!50, pattern=north east lines]
(0,0) -- (1,0) -- (0,1) -- (0,0);
\fill[thick, color=black]
(0,0) circle (2pt) (1,0) circle (2pt) (0,1) circle (2pt);
\draw[thick, color=red]
(0,0) circle (3pt);
\draw[thick,  color=black]
(1,0.5) node{$A$};
\end{tikzpicture}
\spaceA
\begin{tikzpicture}[scale=\scaleA]
\draw[color=oliwkowy!40] (-0.3,-0.3) grid (0.3,1.3);
\draw[thick, color=black]
(0,0) -- (0,1);
\fill[thick, color=black]
(0,0) circle (2pt) (0,1) circle (2pt);
\draw[thick, color=red]
(0,0) circle (3pt);
\draw[thick,  color=black]
(0.75,0.5) node{$B$};
\end{tikzpicture}
\spaceA
\begin{tikzpicture}[scale=\scaleA]
\draw[color=oliwkowy!40] (-0.3,-0.3) grid (1.3,0.3);
\draw[thick, color=black]
(0,0) -- (1,0);
\fill[thick, color=black]
(0,0) circle (2pt) (1,0) circle (2pt);
\draw[thick, color=red]
(0,0) circle (3pt);
\draw[thick,  color=black]
(0.5,0.5) node{$C$};
\end{tikzpicture}
\spaceA
\begin{tikzpicture}[scale=\scaleA]
\draw[color=oliwkowy!40] (-0.3,-0.3) grid (1.3,2.3);
\draw[thick, color=black]
(0,0) -- (1,0) -- (0,2) -- (0,0);
\fill[pattern color=yellow!50, pattern=north west lines]
(0,0) -- (1,0) -- (0,2) -- (0,0);
\fill[thick, color=black]
(0,0) circle (2pt) (1,0) circle (2pt) (0,1) circle (2pt);
\draw[thick, color=red]
(0,1) circle (3pt);
\draw[thick,  color=black]
(1.25,0.5) node{$D$};
\end{tikzpicture}
$$
Define $\Delta^-:=D$ and $\Delta^+:=C$.
The difference $\Delta^-\setminus\Delta^+$
splits into two components:
$$
\newcommand{\scaleA}{0.8}
\newcommand{\spaceA}{\hspace*{4em}}
\begin{tikzpicture}[scale=\scaleA]
\draw[color=oliwkowy!40] (-0.3,-0.3) grid (1.3,2.3);
\draw[thick, color=black]
(0,0) -- (1,0) -- (0,2) -- (0,0);
\draw[thick, color=black]
(0,1.01) -- (1,1.01) (0,0.99) -- (1,0.99);
\fill[pattern color=yellow!50, pattern=north west lines]
(0,0) -- (1,0) -- (0.5,1) -- (0,1) -- (0,0);
\fill[pattern color=darkgreen!50, pattern=north west lines]
(0,1) -- (0.5,1) -- (0,2) -- (0,1);
\fill[thick, color=black]
(0,0) circle (2pt) (1,0) circle (2pt) (0,1) circle (2pt)
(1,1) circle (2pt);
\draw[thick, color=red]
(0,1) circle (3pt);
\end{tikzpicture}
$$
However, since $\Delta^+$ is not contained in $\Delta^-$, there is no
map from $\CO_X(\Delta^+)$ into the sheaves of the two components, which 
are subsheaves of $\CO_X(\Delta^-)$. The solution to this problem is to replace
$\Delta^+$ by $(\Delta^+\cap\Delta^-)$, then proceed as in
section~(\ref{subsection:inclusionOfPolyhedra}), and, finally, 
to use functoriality of $\gExt$ along the
embedding $\CO_X(\Delta^+\cap\Delta^-)\hookrightarrow\CO_X(\Delta^+)$.
\\[0.5ex]
But here one can spot the second problem. As in the example,
the intersection $\Delta^+\cap\Delta^-$ is not necessarily a lattice
polyhedron. This will be overcome by refining the lattice. We replace the lattice $M$ by a larger lattice $\widetilde{M} \supseteq M$ such that $\Delta^+ \cap \Delta^-$ is a lattice polyhedron with respect to $\widetilde{M}$.
Dually, this means to consider some sublattice $\widetilde{N} \subseteq N$ of finite index and the induced finite covering $p \colon \toric(\Sigma, \widetilde{N}) \to \toric(\Sigma, N)$.

\subsubsection{The Intersection is a Lattice Polyhedron}
\label{subsubsection:IntersectionLatticePolyhedron}
We start with the case where the intersection \mbox{$\Delta^+ \cap  \Delta^-$} is a lattice polyhedron with respect to the lattice $M$. 
Suppose that $\Delta^- \setminus \Delta^+ = \Delta^- \setminus (\Delta^+ \cap \Delta^-)$ consists of $n+1$ connected components $C_0, \dots, C_n$. 
Set $\nabla_i \coloneqq C_i \cup  (\Delta^+ \cap \Delta^-)$ for $i \in \{0, \dots, n\}$. 
As before, we can assume without loss of generality that the fan $\Sigma$ is refined enough so that $\Delta^+$, $\Delta^-$, $\Delta^+\cap \Delta^-$ and $\nabla_i$ for $i \in \{0, \dots, n\}$ are compatible with $\Sigma$ and define nef Cartier divisors on $X = \toric(\Sigma)$.\\[1ex]
Consider the following pushout diagram:
\begin{equation}\label{equation:pushoutDiagramOnX}
\xymatrix@R=4.5ex@C=1.7em{
	0 \ar[r] & 
	\CO_X(\Delta^+ \cap \Delta^-)^n \ar[r]^-{} \ar@{^(->}[d]^-{h^n} & 
	\oplus_{i = 0}^n \CO_X(\nabla_i) \ar[r] \ar[d] &
	\CO_X(\Delta^-) \ar[r] \ar@{=}[d]& 0
	\\
	0 \ar[r] & 
	\CO_X(\Delta^+)^n \ar[r] & 
	\CH \ar[r] &
	\CO_X(\Delta^-) \ar[r] & 0.
}
\end{equation}
Here the upper exact sequence is constructed as in subsection~(\ref{subsubsection:manyComp}). 
It is an extension sequence in 
$\gExt\big(\CO_X(\Delta^-), \CO_X(\Delta^+ \cap \Delta^-)^n\big)_0$. 
By Theorem \ref{theorem:ExtMultipleComponents} this corresponds to the $n$-tuple 
$([C_1], \dots, [C_n]) \in \tH^0(\Delta^- \setminus (\Delta^+ \cap \Delta^-))^n = \tH^0(\Delta^- \setminus \Delta^+)^n$.
The inclusion of lattice polyhedra $\Delta^+ \cap \Delta^- \hookrightarrow \Delta^+$ induces the embedding of sheaves $h \colon \CO_X(\Delta^+ \cap \Delta^-)  \hookrightarrow \CO_X(\Delta^+)$ and its $n$-th power $h^n \colon \CO_X(\Delta^+ \cap \Delta^-)^n \hookrightarrow \CO_X(\Delta^+)^n$.
The sheaf $\CH$ is the pushout  of the left square and the universal property of the pushout induces a map $\CH \to \CO_X(\Delta^-)$, that makes the lower sequence exact and the right square commutative. All together we obtain a map of complexes from the upper to the lower exact sequence. This is functoriality of $\gExt(\CO_X(\Delta^-), -)$.

\begin{proposition}
	\label{prop:intersectionLatticePolyhedron}
	Via the identification 
	$\gExt\big(\CO_X(\Delta^-), \CO_X(\Delta^+)^n\big)_0
	= \tH^0\big(\Delta^- \setminus \Delta^+\big)^n$
	the short exact extension sequence 
	\begin{equation}
	0 \to \CO_X(\Delta^+)^n \to \CH \to \CO_X(\Delta^-) \to 0
	\end{equation}
	corresponds to the $n$-tuple 
	$
	([C_1], \dots, [C_n]) \in \tH^0\big(\Delta^- \setminus \Delta^+\big)^n.
	$
\end{proposition}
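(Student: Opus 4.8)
The plan is to read off the lower row of the pushout diagram~(\ref{equation:pushoutDiagramOnX}) as the image of the upper row under the covariant functor $\gExt^1\big(\CO_X(\Delta^-),-\big)$ applied to the inclusion $h^n$. By the very construction of pushouts of extensions, the lower row is $h^n_*$ of the upper one, where
\[
h^n_*\colon\gExt\big(\CO_X(\Delta^-),\CO_X(\Delta^+\cap\Delta^-)^n\big)_0\longrightarrow\gExt\big(\CO_X(\Delta^-),\CO_X(\Delta^+)^n\big)_0 .
\]
As already noted before the statement, the upper row corresponds to $([C_1],\dots,[C_n])\in\tH^0\big(\Delta^-\setminus(\Delta^+\cap\Delta^-)\big)^n$ by Theorem~\ref{theorem:ExtMultipleComponents} (applied with $\Delta^+$ replaced by the lattice polyhedron $\Delta^+\cap\Delta^-$). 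Since $\gExt\big(\CO_X(\Delta^-),-\big)$ commutes with finite direct sums and all the identifications of Section~\ref{section:displayingExt} are natural, the statement reduces to showing that the single-summand map
\[
h_*\colon\gExt\big(\CO_X(\Delta^-),\CO_X(\Delta^+\cap\Delta^-)\big)_0\longrightarrow\gExt\big(\CO_X(\Delta^-),\CO_X(\Delta^+)\big)_0
\]
becomes the identity of $\tH^0(\Delta^-\setminus\Delta^+)$ once we use the set-theoretic equality $\Delta^-\setminus(\Delta^+\cap\Delta^-)=\Delta^-\setminus\Delta^+$ to identify source and target.

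To verify this I would trace $h$ through identifications~(1) and~(2) of Section~\ref{section:displayingExt}: tensoring with $\CO_X(-\Delta^-)$ turns $h$ into the sheaf inclusion $\iota\colon\CO_X\big((\Delta^+\cap\Delta^-)-\Delta^-\big)\hookrightarrow\CO_X(\Delta^+-\Delta^-)$, and $h_*$ into $\gH^1(X,\iota)_0$. Then I would pass to the combinatorial \v{C}ech model of \cite{dop} already invoked in the proof of Theorem~\ref{theorem:ExtComp}. For the standard toric affine cover $\CU$, the degree-$0$ part of the \v{C}ech complex of $\CO_X(\Delta^+-\Delta^-)$ is $\CC^\bullet(\Delta^-,\CS)$ with $S(\sigma)=\Delta^-\setminus(\Delta^++\sigma^\vee)$, and that of $\CO_X\big((\Delta^+\cap\Delta^-)-\Delta^-\big)$ is $\CC^\bullet(\Delta^-,\CS')$ with $S'(\sigma)=\Delta^-\setminus\big((\Delta^+\cap\Delta^-)+\sigma^\vee\big)$; here $S(\sigma)\subseteq S'(\sigma)$ since $(\Delta^+\cap\Delta^-)+\sigma^\vee\subseteq\Delta^++\sigma^\vee$. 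The point to spell out is that $\gH^0(U_\sigma,\CO_X(\Delta^+-\Delta^-))_0$ is one-dimensional exactly when $\Delta^-\subseteq\Delta^++\sigma^\vee$, i.e.\ when $S(\sigma)=\emptyset$ (and likewise with $\Delta^+\cap\Delta^-$), and that $\iota$, being the identity inclusion on characters, induces on these degree-$0$ local sections precisely the pullback $\gH^0(\Delta^-,S'(\sigma))\to\gH^0(\Delta^-,S(\sigma))$ along the map of pairs $(\Delta^-,S(\sigma))\hookrightarrow(\Delta^-,S'(\sigma))$. Consequently $\gH^1(X,\iota)_0$ is the pullback $\gH^1\big(\Delta^-,\Delta^-\setminus(\Delta^+\cap\Delta^-)\big)\to\gH^1(\Delta^-,\Delta^-\setminus\Delta^+)$ along the map of pairs induced by $\Delta^-\setminus\Delta^+\hookrightarrow\Delta^-\setminus(\Delta^+\cap\Delta^-)$. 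Since these two subspaces coincide, this is the identity of the pair, hence the identity on cohomology, and by naturality of the connecting isomorphism $\gH^1(\Delta^-,\Delta^-\setminus\Delta^+)\cong\tH^0(\Delta^-\setminus\Delta^+)$ in the long exact sequence of the pair, $h_*$ is the identity of $\tH^0(\Delta^-\setminus\Delta^+)$. Together with the first paragraph this shows the lower extension of~(\ref{equation:pushoutDiagramOnX}) corresponds to $([C_1],\dots,[C_n])$.

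The step I expect to be the main obstacle is the one just used: checking that the ``spotting cohomology'' identification of \cite{immaculate} and \cite{dop} is functorial along the sheaf inclusion $h$ of the positive polyhedra — i.e.\ that passing from $\Delta^+\cap\Delta^-$ to $\Delta^+$ changes the local models only through the inclusions $S(\sigma)\subseteq S'(\sigma)$, so that on $\gH^1$ in degree $0$ one obtains literally the pullback along a map of pairs, not merely a square commuting up to homotopy. Everything else is the same bookkeeping as in the proofs of Theorems~\ref{theorem:ExtComp} and~\ref{theorem:ExtMultipleComponents}, now for tuples indexed by $\{1,\dots,n\}$; in particular, one may alternatively pin down $h_*$ on the basis $[C_1],\dots,[C_n]$ by applying Corollary~\ref{cor:singleExtensions} with $\Delta^+\cap\Delta^-$ in place of $\Delta^+$ and evaluating the resulting single extensions on the explicit \v{C}ech cocycles of the proof of Theorem~\ref{theorem:ExtComp}.
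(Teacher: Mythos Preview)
Your argument is correct and follows essentially the same route as the paper: both use functoriality of $\gExt(\CO_X(\Delta^-),-)$ along the pushout by $h^n$, tensor with $\CO_X(\Delta^-)^{-1}$, pass to the induced map on $\gH^1$, and then invoke the set-theoretic equality $\Delta^-\setminus(\Delta^+\cap\Delta^-)=\Delta^-\setminus\Delta^+$ to conclude that the map on $\tH^0$ is the identity. The paper compresses your entire second paragraph into the single sentence ``It is easy to see that $\mu=\gH^1((h')^n)^{-1}(\eta)$ is also mapped to $([C_1],\dots,[C_n])$'', whereas you actually spell out the functoriality of the \v{C}ech/\cite{dop} identification along the inclusion $\iota$ via the map of pairs; so your version is strictly more detailed at exactly the step you flagged as the potential obstacle.
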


\begin{proof}
	Tensoring both sequences in diagram~(\ref{equation:pushoutDiagramOnX}) with $\CO_X(\Delta^-)^{-1}$ yields
	\begin{equation}
	\xymatrix@R=4.5ex@C=1.7em{
		0 \ar[r]  &
		\CO_X((\Delta^+ \cap \Delta^-) - \Delta^-)^n \ar[r]^-{} \ar@{^(->}[d]^-{(h')^n} & 
		\oplus_{i = 0}^n \CO_X(\nabla_i - \Delta^-) \ar[r] \ar[d] &
		\CO_X \ar[r] \ar@{=}[d]& 0
		\\
		0 \ar[r] & 
		\CO_X(\Delta^+ - \Delta^-)^n \ar[r] & 
		\CH' \ar[r] &
		\CO_X \ar[r] & 0,
	}
	\end{equation}
	where $\CH' = \CH \otimes \CO_X(\Delta^-)^{-1}$ is also the pushout of the left square.
	The map of complexes between the short exact sequences induces a map of complexes between the long exact sequences in cohomology. In particular we obtain a commuting square
	\begin{equation}
	\xymatrix@R=4.5ex@C=1.7em{
		\Gamma(\CO_X) \ar[r]^{d \hspace{2cm}} \ar@{=}[d] & \gH^1\big(X, \CO_X(\Delta^+ - \Delta^-)\big)^n \ar[d]^{\gH^1((h')^n)}_{\cong} & \ni & \mu \ar@{|->}[d] \\
		\Gamma(\CO_X) \ar[r]^{d \hspace{2cm}}  & \gH^1\big(X, \CO_X((\Delta^+ \cap \Delta^- )- \Delta^-)\big)^n & \ni & \eta,
	}
	\end{equation}
	where $\eta$ and $\mu$ denote the images of $1 \in \Gamma\big(X, \CO_X\big)_0$ under the differential $d$. By Theorem \ref{theorem:ExtMultipleComponents} the isomorphism from $\gH^1\big(X, \CO_X(\Delta^+ - \Delta^-)\big)^n$ to reduced singular cohomology maps $\eta$ to 
	$
	([C_1], \dots, [C_n]) \in \tH^0\big(\Delta^- \setminus (\Delta^+ \cap \Delta^-)\big)^n = \tH^0\big(\Delta^- \setminus \Delta^+\big)^n.
	$
	It is easy to see that $\mu = \gH^1((h')^n)^{-1}(\eta)$ is also mapped to
	$
	([C_1], \dots, [C_n]) \in \tH^0\big(\Delta^- \setminus \Delta^+\big)^n.
	$
\end{proof}

\subsubsection{The Intersection is Not a Lattice Polyhedron}
\label{subsubsection:IntersectionNotLatticePolyhedron}
We now deal with the case where $\Delta^+ \cap  \Delta^- \subseteq M_\R$ is not a lattice polyhedron with respect to the lattice $M$. 
Take any refinement $\widetilde{M} \supseteq M$ such that $M$ is a sublattice of finite index in $\widetilde{M}$ and $\Delta^+ \cap  \Delta^- \subseteq \widetilde{M}_\R = M_\R \cong \R^\rankM$ is a lattice polyhedron with respect to $\widetilde{M}$.
Dually, $\widetilde{N} \subseteq N \cong \Z^\rankM$ is a sublattice of finite index.  
Let $G \coloneqq N / \widetilde{N}$ be the finite quotient group. 
For a toric variety $X = \toric(\Sigma,N)$ we can consider the fan $\Sigma$ in $N_\R = \widetilde{N}_\R$ with respect to the lattice $\widetilde{N}$ and obtain a second toric variety $\widetilde{X} = \toric(\Sigma,\widetilde{N})$ realizing $X$ as the geometric quotient $X = \widetilde{X} / G$. 
The lattice inclusion $\iota \colon \widetilde{N}  \hookrightarrow N$ induces the toric covering morphism $p \colon \widetilde{X} \rightarrow X$ \cite[Prop. 3.3.7]{CoxBook}. 
We pull the sheaves $\CO_X(\Delta^+)$ and $\CO_X(\Delta^-)$ back to $\widetilde{X}$ via $p$ and use the results from section~(\ref{subsubsection:IntersectionLatticePolyhedron}).

\begin{example}
	\label{example:scaleInEveryDirection}
	For $M = \Z^\rankM$ and its dual $N = \Z^\rankM$ take $\widetilde{M}$ to be $(\frac{1}{d}\Z)^\rankM$ and correspondingly $\widetilde{N} = (d\Z)^\rankM$. 
	Then each ray generator $\widetilde{v}_\rho \in \widetilde{N}$ of a ray $\rho \in \Sigma(1)$ is the $d$-multiple of the corresponding ray generator $v_\rho \in N$. 
	The coefficients of the pullback $p^*D$ of a Weil divisor $D$ on $X$ will be $d$-multiples of the coefficients of $D$.
\end{example}

\begin{construction}
\label{construction:pushforwardSequence}
	Back to our general case, the refinement $\widetilde{M} \supseteq M$ was chosen so that $\Delta^+ \cap \Delta^-$ is a lattice polyhedron with respect to $\widetilde{M}$ and we are in the case of section (\ref{subsubsection:IntersectionLatticePolyhedron}), however on the space $\widetilde{X}$.
	We have the short exact extension sequence
	\begin{equation}\label{equation:intersectoinExtensionOnXTilde}
	0 \to \CO_{\widetilde{X}}(\Delta^+ \cap \Delta^-)^n  \to \oplus_{i = 0}^n \CO_{\widetilde{X}}(\nabla_i) \to \CO_{\widetilde{X}}(\Delta^-) \to 0
	\end{equation}
	and the short exact extension sequence
	\begin{equation}\label{equation:extensionOnXTilde}
	0 \to \CO_{\widetilde{X}}(\Delta^+)^n  \to \widetilde{\CH} \to \CO_{\widetilde{X}}(\Delta^-) \to 0
	\end{equation}
	induced by the embedding $\CO_{\widetilde{X}}(\Delta^+ \cap \Delta^-)^n  \hookrightarrow \CO_{\widetilde{X}}(\Delta^+)^n$. 
	Under the identification with 
	$\tH^0(\Delta^- \setminus \Delta^+)^n$, both correspond to the $n$-tuple
	$
	([C_1], \dots, [C_n]) \in  \tH^0(\Delta^- \setminus \Delta^+)^n
	$
	by Theorem~\ref{theorem:ExtMultipleComponents} and Proposition~\ref{prop:intersectionLatticePolyhedron}.
	Since $p \colon \widetilde{X} \to X$ is affine, 
	the pushfoward $p_*$ is exact and we obtain a short exact extension sequence of sheaves on $X$:
	\begin{equation}\label{equation:pushforwardExtensionOnXTilde}
	0 \to p_*\CO_{\widetilde{X}}(\Delta^+)^n  \to p_*\widetilde{\CH} \to p_*\CO_{\widetilde{X}}(\Delta^-) \to 0.
	\end{equation}
	Recall that in the first step of the identification of $\gExt\big(\CO_{\widetilde{X}}(\Delta^-), \CO_{\widetilde{X}}(\Delta^+)^n\big)_0$ with $\tH^0\big(\Delta^- \setminus \Delta^+\big)^n$ we tensor sequence~(\ref{equation:extensionOnXTilde}) with $\CO_{\widetilde{X}}(\Delta^-)^{-1}$. Its pushforward is
	\begin{equation}
	0 
	\to p_*(\CO_{\widetilde{X}}(\Delta^+ - \Delta^-))^n
	\to p_*(\widetilde{\CH}')
	\to p_*\CO_{\widetilde{X}}
	\to 0.
	\end{equation}
	By the projection formula this
	is equal to sequence~(\ref{equation:pushforwardExtensionOnXTilde}) tensored with $\CO_X(\Delta^-)^{-1}$.
	For an affine morphism $p \colon \widetilde{X} \to X$ of noetherian separated schemes and a short exact sequence $0 \to \CF \to \CG \to \CH \to 0$
	of quasi-coherent sheaves on $\widetilde{X}$
	 there is a commuting diagram in which all vertical morphisms are isomorphisms:
	$$
	\xymatrix@R=4.5ex@C=1.7em{
		\Gamma(\widetilde{X}, \CF) \ar[r] \ar[d]^{\cong} & \Gamma(\widetilde{X}, \CG) \ar[r] \ar[d]^{\cong} &  \Gamma(\widetilde{X},\CH) \ar[r]^{\partial} \ar[d]^{\cong} & \gH^1(\widetilde{X},\CF) \ar[d]^{\cong}  \ar[r] \ar[d] & \gH^1(\widetilde{X}, \CG) \ar[r] \ar[d]^{\cong} & \dots \\
		\Gamma(X, p_*\CF) \ar[r] & \Gamma(X, p_*\CG) \ar[r] & \Gamma(X,p_*\CH) \ar[r]^{\partial} & \gH^1(X,p_*\CF) \ar[r] & \gH^1(X, p_*\CG) \ar[r] & \dots.
	}
	$$
	In particular,
	$
	\gH^1\big(X,p_*\CO_{\widetilde{X}}(\Delta^+ - \Delta^-)\big)^n = \gH^1\big(\widetilde{X},\CO_{\widetilde{X}}(\Delta^+ - \Delta^-)\big)^n
	$
	and the image of $1 \in \Gamma(X, p_*\CO_{\widetilde{X}})$ under $\partial$ is the desired $n$-tuple $([C_1], \dots, [C_n])$.
\end{construction}
While $p_*\CO_{\widetilde{X}}(\Delta^+) = p_*p^*\CO_X(\Delta^+)$ and $p_*\CO_{\widetilde{X}}(\Delta^-) = p_*p^*\CO_X(\Delta^-)$ are in general not equal to $\CO_{X}(\Delta^+)$ and $\CO_{X}(\Delta^-)$, respectively, the latter are a direct summand:
\begin{equation}
\CO_X(\Delta^\pm) = \bigoplus_{m \in M} \CO_X(\Delta^\pm)_m = \bigoplus_{m \in M} (p_*p^*\CO_X(\Delta^\pm))_m \subseteq \bigoplus_{\widetilde{m} \in \widetilde{M}} (p_*p^*\CO_X(\Delta^\pm))_{\widetilde{m}}.
\end{equation}
Furthermore, $\CO_X(\Delta^\pm)$ corresponds precisely to the $G$-invariants of $p_*p^*\CO_X(\Delta^\pm)$. 

\begin{example}
	For $\CO_X(\Delta^\pm)$ and the pullback $p^*\CO_X(\Delta^\pm) = \CO_{\widetilde{X}}(\Delta^\pm)$ we have
	\begin{equation}
	\CO_X(\Delta^{\pm})(U_\sigma) = \chi^{m_\sigma^{\pm}}\cdot \bC[\sigma^\vee \cap M] \subseteq \chi^{m_\sigma^{\pm}} \cdot \bC[\sigma^\vee \cap \widetilde{M}]  = p_*\CO_{\widetilde{X}}(\Delta^+)(U_\sigma),
	\end{equation}
	where $\{m_\sigma^{\pm}\}_{\sigma \in \Sigma}$ is the Cartier data of $D_{\Delta^{\pm}}$. All $m_\sigma^\pm$ are contained in $M \subseteq \widetilde{M}$. 
	Via $\bC[\sigma^\vee \cap M] \hookrightarrow \bC[\sigma^\vee \cap \widetilde{M}]$ we can view the above as an inclusion of \mbox{$\bC[\sigma^\vee \cap M]$-modules}. 
	Any $M$-graded module obtains an $\widetilde{M}$-grading by adding zeros in degrees $\widetilde{m} \in \widetilde{M} \setminus M$. 
\end{example}
For the sheaves in the short exact sequence~(\ref{equation:pushforwardExtensionOnXTilde})
we have the $\widetilde{M}$-graded subsheaves $\CO_X(\Delta^+)$ of $p_*\CO_{\widetilde{X}}(\Delta^+)$ and $\CO_{X}(\Delta^-)$ of $p_*\CO_{\widetilde{X}}(\Delta^-)$. 
The inclusions are isomorphisms when restricted to $M \subseteq \widetilde{M}$. 
Because the morphisms in sequence~(\ref{equation:pushforwardExtensionOnXTilde}) are homogeneous of degree $0$, taking the $M$-graded part of $p_*\widetilde{\CH}$ yields a subsheaf 
$
\CH \coloneqq \oplus_{m \in M} (p_*\widetilde{\CH})_m \subseteq p_*\widetilde{\CH}
$ 
that fits into a commuting diagram of exact sequences
\begin{equation}\label{diagram:commutingDiagramSES}
\xymatrix@R=4.5ex@C=1.7em{
	0 \ar[r] & \CO_{X}(\Delta^+)^n  \ar[r] \ar@{^(->}[d]  &  \CH \ar[r] \ar@{^(->}[d]  & \CO_{X}(\Delta^-) \ar[r] \ar@{^(->}[d] & 0\\
	0 \ar[r] & p_*\CO_{\widetilde{X}}(\Delta^+)^n  \ar[r] &  p_*\widetilde{\CH} \ar[r] & p_*\CO_{\widetilde{X}}(\Delta^-) \ar[r] & 0.
}
\end{equation}
\begin{proposition}
	\label{prop:subPushforwardSequence}
	Given the commuting diagram~(\ref{diagram:commutingDiagramSES}), 
	the upper sequence
	\begin{equation}\label{equation:extensionSequenceGeneralCase}
	0 \to \CO_{X}(\Delta^+)^n \to \CH \to \CO_X(\Delta^-) \to 0
	\end{equation}
	 corresponds to the $n$-tuple
	$([C_1], \dots, [C_n]) \in \tH^0(\Delta^- \setminus \Delta^+)^n$
	under the identification
	$\gExt\big(\CO_X(\Delta^-), \CO_X(\Delta^+)^n\big)_0 = \tH^0(\Delta^- \setminus \Delta^+)^n.$
\end{proposition}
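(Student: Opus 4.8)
The plan is to follow the proof of Proposition~\ref{prop:intersectionLatticePolyhedron} and to transport the computation already carried out on $\widetilde{X}$ in Construction~\ref{construction:pushforwardSequence} down to $X$ via the vertical inclusions of diagram~\eqref{diagram:commutingDiagramSES}. First I would tensor the whole of diagram~\eqref{diagram:commutingDiagramSES} with $\CO_X(\Delta^-)^{-1}$: the lower row becomes sequence~\eqref{equation:pushforwardExtensionOnXTilde} twisted by $\CO_X(\Delta^-)^{-1}$, the upper row becomes $0 \to \CO_X(\Delta^+-\Delta^-)^n \to \CH' \to \CO_X \to 0$ with $\CH' = \CH \otimes \CO_X(\Delta^-)^{-1}$, and all vertical maps remain injective. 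Passing to the long exact sequences in cohomology and using naturality of the connecting homomorphism $\partial$, the class of the upper sequence in $\gExt\big(\CO_X(\Delta^-),\CO_X(\Delta^+)^n\big)_0$ is the image of $1 \in \Gamma(X,\CO_X)$ under $\partial$, and this image maps, under the vertical map $\gH^1\big(X,\CO_X(\Delta^+-\Delta^-)\big)^n \to \gH^1\big(X, p_*\CO_{\widetilde{X}}(\Delta^+-\Delta^-)\big)^n$, to the image of $1 \in \Gamma(X, p_*\CO_{\widetilde{X}})$ under the corresponding $\partial$, simply because $\CO_X \hookrightarrow p_*\CO_{\widetilde{X}}$ carries the unit to the unit.

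Next I would identify that lower element. By the identity $\gH^1\big(X, p_*\CO_{\widetilde{X}}(\Delta^+-\Delta^-)\big) = \gH^1\big(\widetilde{X}, \CO_{\widetilde{X}}(\Delta^+-\Delta^-)\big)$ noted in Construction~\ref{construction:pushforwardSequence}, that image of $1$ is precisely the $n$-tuple $([C_1],\dots,[C_n])$ sitting in the degree-$0$ part of $\gH^1\big(\widetilde{X}, \CO_{\widetilde{X}}(\Delta^+-\Delta^-)\big)^n \cong \tH^0(\Delta^- \setminus \Delta^+)^n$, by Theorem~\ref{theorem:ExtMultipleComponents} and Proposition~\ref{prop:intersectionLatticePolyhedron}. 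It then remains to check that the vertical map is, in the relevant degree, the obvious identification: on each $M$-graded piece the inclusions $\CO_X(\Delta^\pm) \hookrightarrow p_*\CO_{\widetilde{X}}(\Delta^\pm)$ and $\CH \hookrightarrow p_*\widetilde{\CH}$ of diagram~\eqref{diagram:commutingDiagramSES} are isomorphisms (indeed $\CO_X(\Delta^\pm)$ is the $G$-invariant part of $p_*p^*\CO_X(\Delta^\pm)$ and $\CH$ was defined as $\oplus_{m \in M}(p_*\widetilde{\CH})_m$), so after twisting and in degree $0$ the vertical map $\gH^1\big(X,\CO_X(\Delta^+-\Delta^-)\big)_0 \to \gH^1\big(\widetilde{X},\CO_{\widetilde{X}}(\Delta^+-\Delta^-)\big)_0$ is an isomorphism; and both sides equal $\tH^0(\Delta^- \setminus \Delta^+)$, since the degree-$0$ piece of the set-theoretic-difference description of subsection~\ref{subsection:spottingCohomology} is $\tH^0(\Delta^- \setminus (\Delta^+-0)) = \tH^0(\Delta^- \setminus \Delta^+)$ whether one works over $M$ or over $\widetilde{M}$. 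Chasing $1$ through this identification gives that sequence~\eqref{equation:extensionSequenceGeneralCase} corresponds to $([C_1],\dots,[C_n])$.

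I expect the main obstacle to be the grading bookkeeping behind this last compatibility claim, namely that passing to $M$-graded parts (equivalently, to $G$-invariants, which is exact since $G$ is finite and we work over $\bC$) commutes with the formation of the connecting homomorphism and with the identification of $\gH^1$ with the reduced singular cohomology of $\Delta^-\setminus\Delta^+$. For this one uses that $p$ is affine, so $p_*$ is exact and computes cohomology; that $p_*\widetilde{\CH}$ splits as a direct sum over $\widetilde{M}$-degrees with the maps of sequence~\eqref{equation:pushforwardExtensionOnXTilde} homogeneous of degree $0$; and that the \v{C}ech description entering Theorem~\ref{theorem:ExtMultipleComponents} is degree-wise, so restricting to degree $0$ is harmless and the $n$-tuple obtained on $X$ is literally the one obtained on $\widetilde{X}$. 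Everything else is a direct application of Theorem~\ref{theorem:ExtMultipleComponents}, Proposition~\ref{prop:intersectionLatticePolyhedron} and Construction~\ref{construction:pushforwardSequence}, so the argument is routine once this diagram chase is set up carefully.
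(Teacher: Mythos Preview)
Your proposal is correct and follows essentially the same approach as the paper: tensor diagram~\eqref{diagram:commutingDiagramSES} with $\CO_X(\Delta^-)^{-1}$, pass to the induced map of long exact cohomology sequences, use Construction~\ref{construction:pushforwardSequence} to identify the image of $1$ in the lower row as $([C_1],\dots,[C_n])$, and then observe that the vertical maps are isomorphisms in degree~$0$ so that the same element is obtained in the upper row. Your additional remarks on $G$-invariants and the degree-wise \v{C}ech description are more detailed than what the paper records, but they are exactly the justifications underlying the paper's assertion that the vertical maps restrict to isomorphisms in degree~$0$.
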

\begin{proof}
	Tensoring diagram~(\ref{diagram:commutingDiagramSES}) with $\CO_X(\Delta^-)^{-1}$ yields the  commutative diagram 
	\begin{equation}
	\xymatrix@R=4.5ex@C=1.7em{
		0 \ar[r] & \CO_{X}(\Delta^+- \Delta^-)^n  \ar[r] \ar@{^(->}[d]  &  \CH \otimes_{\CO_X} \CO_{X}(\Delta^-)^{-1} \ar[r] \ar@{^(->}[d]  & \CO_{X} \ar[r] \ar@{^(->}[d] & 0\\
		0 \ar[r] & p_*(\CO_{\widetilde{X}}(\Delta^+ - \Delta^-))^n  \ar[r] &  p_*\widetilde{\CH}' \ar[r] & p_*\CO_{\widetilde{X}} \ar[r] & 0,
	}
	\end{equation}
	the lower sequence inducing the desired $n$-tuple by Construction~\ref{construction:pushforwardSequence}.
	We obtain a commutative diagram of long exact sequences
	\begin{equation}
	\xymatrix@R=4.5ex@C=1.7em{
		\cdots \ar[r] &  \Gamma(X,\CO_{X}) \ar[r] \ar[d]^{(1)} & \gH^1\big(X,\CO_{X}(\Delta^+- \Delta^-)\big)^n \ar[r] \ar[d]^{(2)} &  \cdots   \\
		\cdots \ar[r] & \Gamma(X,p_*\CO_{\widetilde{X}}) \ar[r] & \gH^1\big(X,p_*(\CO_{\widetilde{X}}(\Delta^+ - \Delta^-))\big)^n \ar[r] & \cdots
	}
	\end{equation}
	The homomorphisms $(1)$ and $(2)$ restrict to isomorphisms in degree $0 \in M$, so \mbox{$1 \in \Gamma(X,\CO_X)_0 = \Gamma(X,p_*\CO_{\widetilde{X}})_0$} also maps to
	$([C_1], \dots, [C_n]) \in \tH^0(\Delta^- \setminus \Delta^+)^n.$
\end{proof}

\begin{corollary}
	\label{cor:pushoutSingleExtension}
	Sequence~(\ref{equation:extensionSequenceGeneralCase})
	induces $n$ extensions in $\gExt\big(\CO_X(\Delta^-),\CO_X(\Delta^+)\big)_0$,
	one for each $i = 1, \dots, n$:
	\begin{equation}
	\xymatrix@R=4.5ex@C=1.7em{
		0 \ar[r] & 
		\CO_X(\Delta^+)^n \ar[r] \ar@{->>}[d]^{pr_i} & 
		\CH \ar[r] \ar@{->}[d] &
		\CO_X(\Delta^-) \ar[r] \ar@{=}[d]& 0
		\\
		0 \ar[r] & 
		\CO_X(\Delta^+) \ar[r] & \CH_i \ar[r] &
		\CO_X(\Delta^-) \ar[r] & 0.
	}
	\end{equation}
	Via the identification
	$\gExt\big(\CO_X(\Delta^-), \CO_X(\Delta^+)\big)_0 =
	\tH^{0}\big(\Delta^- \setminus\Delta^+\big),$
	the $i$-th extension 
	corresponds to the class $[C_i] \in \tH^0\big(\Delta^- \setminus\Delta^+\big)$.
	In particular, the $n$ extensions for $i \in \{1, \dots, n\}$ form a basis of $\gExt\big(\CO_X(\Delta^-),\CO_X(\Delta^+)\big)_0$.\\[1ex]
	Furthermore, sequence~(\ref{equation:extensionSequenceGeneralCase}) is a universal extension for $\gExt\big(\CO_X(\Delta^-), \CO_X(\Delta^+)\big)_0$.
\end{corollary}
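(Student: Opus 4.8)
The plan is to deduce the corollary from Proposition~\ref{prop:subPushforwardSequence} together with the functoriality of the covariant functor $\gExt^1(\CO_X(\Delta^-),-)$, following exactly the pattern of the proof of Corollary~\ref{cor:singleExtensions} and of the universal extension theorem in subsection~(\ref{subsubsection:universalExtension}); no genuinely new ingredient is needed.

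First I would treat the $n$ single extensions. For each $i$ the projection $\pr_i\colon \CO_X(\Delta^+)^n \to \CO_X(\Delta^+)$ induces, by functoriality of $\gExt(\CO_X(\Delta^-),-)$, the $i$-th coordinate projection on extension groups, which under the identification $\gExt(\CO_X(\Delta^-),\CO_X(\Delta^+)^n)_0 = \tH^0(\Delta^-\setminus\Delta^+)^n$ becomes the $i$-th coordinate projection $\tH^0(\Delta^-\setminus\Delta^+)^n \to \tH^0(\Delta^-\setminus\Delta^+)$. This relies on the fact that every isomorphism in the chain from $\gExt_0$ to reduced singular cohomology (tensoring with $\CO_X(\Delta^-)^{-1}$, the connecting map $\Gamma(X,\CO_X)\to\gH^1(X,-)$, and the combinatorial description of $\gH^1$) is natural in the coefficient sheaf, which is exactly what was already exploited in Theorem~\ref{theorem:ExtMultipleComponents} and Proposition~\ref{prop:subPushforwardSequence}. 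Since by Proposition~\ref{prop:subPushforwardSequence} sequence~(\ref{equation:extensionSequenceGeneralCase}) corresponds to $([C_1],\dots,[C_n])$, the pushout $\CH_i$ along $\pr_i$ corresponds to its $i$-th coordinate $[C_i]\in\tH^0(\Delta^-\setminus\Delta^+)$. For the basis statement, recall that the connected components $C_0,\dots,C_n$ of $\Delta^-\setminus\Delta^+$ form a basis of $\gH^0(\Delta^-\setminus\Delta^+)$ and that $\tH^0(\Delta^-\setminus\Delta^+)$ is the quotient by the line spanned by $[C_0]+\dots+[C_n]$; hence $\{[C_1],\dots,[C_n]\}$ is a basis of the $n$-dimensional space $\tH^0(\Delta^-\setminus\Delta^+)$, and so the extensions $\CH_1,\dots,\CH_n$ form a basis of $\gExt(\CO_X(\Delta^-),\CO_X(\Delta^+))_0$.

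For universality I would repeat the argument of subsection~(\ref{subsubsection:universalExtension}) verbatim, with the middle sheaf $\oplus_{i=0}^n\CO_X(\nabla_i)$ replaced by $\CH$. Writing $E_0 \coloneqq \gExt(\CO_X(\Delta^-),\CO_X(\Delta^+))_0 \cong \tH^0(\Delta^-\setminus\Delta^+)$, take $\{C_1\dual - C_0\dual,\dots,C_n\dual - C_0\dual\}$ as the basis of $E_0\dual = \ker\bigl(\gH^0(\Delta^-\setminus\Delta^+)\dual\to\gH^0(\{\cdot\})\dual\bigr)$ dual to $\{[C_1],\dots,[C_n]\}$. Under the resulting isomorphism $\CO_X(\Delta^+)^n \cong \CO_X(\Delta^+)\otimes_\bC E_0\dual$ sending the $i$-th standard basis vector to $C_i\dual - C_0\dual$, sequence~(\ref{equation:extensionSequenceGeneralCase}) takes the form $0 \to \CO_X(\Delta^+)\otimes_\bC E_0\dual \to \CH \to \CO_X(\Delta^-)\to 0$. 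For $t\in E_0$, viewed as an element of $\Hom_\bC(E_0\dual,\bC)$, evaluation on the basis vector $[C_i]$ is the projection $\pr_i\colon E_0\dual\to\bC$, so by the previous paragraph the pushout of sequence~(\ref{equation:extensionSequenceGeneralCase}) along $\pr_i$ is $[C_i]=t$; by bilinearity of the pushout/Baer-sum operation the universal property then holds for every $t\in E_0$, not just for basis elements.

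The only step that is more than bookkeeping is the naturality claim invoked in the second paragraph, i.e.\ that the whole chain of identifications from $\gExt_0$ to $\tH^0$ is natural in the second argument of $\gExt$. But this is not new here: it is precisely the naturality already used in Theorem~\ref{theorem:ExtMultipleComponents} and Proposition~\ref{prop:subPushforwardSequence}, so for this corollary it merely needs to be cited. Everything else duplicates the combinatorial bookkeeping of Corollary~\ref{cor:singleExtensions} and of subsection~(\ref{subsubsection:universalExtension}).
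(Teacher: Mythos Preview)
Your proposal is correct and matches the paper's approach exactly: the paper gives no explicit proof of this corollary, treating it as immediate from Proposition~\ref{prop:subPushforwardSequence} by the same functoriality/naturality argument used for Corollary~\ref{cor:singleExtensions} and the universal extension theorem of subsection~(\ref{subsubsection:universalExtension}). Your write-up simply makes this implicit reasoning explicit.
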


\section{Using Klyachko's Description of Toric Reflexive Sheaves}
\label{section:klyachko}
We briefly show how to construct the universal extension sequence in the case where $\Delta^+ \cap \Delta^-$ is not a lattice polyhedron in terms of Klyachko's description of toric reflexive sheaves (see \cite{klyachko} and
\cite{klyachkoICM}; a short summary
can be found in~\cite{payne}; see \cite{parliaments}
and \cite{tropical} for more recent approaches).
\subsection{Describing Sheaves via Filtrations}
\label{subsection:sheavesViaFiltrations}
Consider a 
toric variety $X=\toric(\Sigma)$ given by a fan $\Sigma$ in $N_\R$.
Let $1\in T\subseteq X$ denote the neutral element. 
Each $\CO_X$-module $\CE$ gives 
rise to a $\C$-vector space
$
E:=\CE(1):=\CE_1/\idm_{X,1}\CE_1,
$
where $\CE_1$ denotes the stalk of $\CE$ at $1\in X$ and 
$\idm_{X,1}$ the maximal ideal of $1$. If $\CE$ is a $T$-equivariant, 
torsion free sheaf on $X$, 
the sections of $\CE$ on the open, affine, $T$-invariant 
subsets $\toric(\sigma)\subseteq X$ with $\sigma\in\Sigma$
are $M$-graded subsets of $E\otimes_\C\C[M]$. 
If, in addition, $\CE$ is reflexive, then $\CE$ is already determined 
by its restriction to open subsets whose complements are of codimension 
equal or greater than two. 
Via Klyachko's description~\cite{klyachko}, a toric reflexive sheaf $\CE$ corresponds 
to a set of decreasing $\Z$-filtrations
\begin{equation}
F(\CE)_\rho^\kbb \;=\; [\ldots \supseteq E_\rho^{\ell-1}\supseteq E_\rho^{\ell}\supseteq 
E_\rho^{\ell+1}\supseteq\ldots]\hspace{1em}(\ell\in\Z)
\end{equation}
of the vector space $E$ which are parametrized by the rays $\rho\in\Sigma(1)$. 
Let $v_\rho$ denote the primitive generator of the ray $\rho$.
The filtrations encode the sections of $\CE$ on the $T$-invariant 
open subsets $U_\rho=\toric(\rho)\subseteq X$ defined by $\rho$. 
Namely, for $u \in M$,
\begin{equation}
e\otimes\chi^u\in\Gamma(U_\rho,\CE)\quad\iff\quad 
e\in E_\rho^{-\langle u,v_\rho\rangle}=F(\CE)_\rho^{-\langle u,v_\rho\rangle}.
\end{equation}

\begin{remark}
\label{rem:klyachkoCompatability}
The reflexive sheaf $\CE$ defines a toric {\em vector bundle} if it is 
subject to \emph{Klyachko's compatibility condition}~\cite{klyachko}: 
For each cone $\sigma\in\Sigma$ there exists a decomposition 
$E =\bigoplus_{[u]\in M/M\cap\sigma^\perp}E_{[u]}$ so that 
$E^l_\rho = \sum_{\langle u,v_\rho\rangle\ge l} E_{[u]}$ for each 
$\rho \in \sigma(1)$.
\end{remark}

\subsection{Line and Tangent Bundles}
\label{subsection:examplesKlyachko}
Line bundles and the tangent are examples
of toric bundles with filtrations as follows 
(compare \cite[Example 2.3]{klyachko}).
\begin{example}\label{example:lineBundleKlyachko}
Let $D_\rho=\ko{\orb(\rho)}$ be the closure of the orbit defined by $\rho \in \Sigma(1)$. For $D=\sum_{\rho \in \Sigma(1)} \lambda_\rho\,D_\rho, \, \lambda_\rho \in \bZ$, the invertible sheaf
$\CO_{X}(D)$ is encoded by
\begin{equation}
E_\rho^\ell:=\left\{\begin{array}{cl}
\bC & \mbox{if } \ell \leq\lambda_\rho
\\
0 & \mbox{if } \ell \geq \lambda_\rho+1
\end{array}\right\}
\subseteq \bC \eqqcolon E.
\end{equation}
\end{example}

\begin{example}
	\label{subsubsection:tangentBundleKlyachko}
The tangent sheaf $\CT_{X}$ corresponds to the filtration
\begin{equation}
T^{\ell}_\rho:=\left\{\begin{array}{cl}
N_\bC = N \otimes_\bZ \bC & \mbox{if } \ell \leq 0
\\
\spann(\rho) & \mbox{if } \ell =1
\\
0 & \mbox{if } \ell \geq 2.
\end{array}\right\}\subseteq N_\bC \eqqcolon E.
\end{equation}
\end{example}
\begin{remark}
	Projective $n$-space $\PP^n$ is the toric variety associated to the normal fan $\normal(\Delta_n)$ of the standard $n$-simplex $\Delta_n$. The fan $\normal(\Delta_n)$ has $n+1$ rays $\rho_0, \dots, \rho_n$. 
	The direct sum of invertible sheaves $\oplus_{j = 0}^n \CO_{\PP^n}(\overline{\orb(\rho_j)})$ corresponds to the filtrations
	\begin{equation}
	E_{\rho_i}^\ell \coloneqq \left\{\begin{array}{cl}
	E & \mbox{if } \ell \leq 0 \\
	\bC \cdot e^{\rho_i} & \mbox{if } \ell = 1
	\\
	0 & \mbox{if } \ell \geq 2
	\end{array}\right\}
	\subseteq E \coloneqq \oplus_{j = 0}^n \bC \cdot e^{\rho_j} .
	\end{equation}
	The canonical surjection $\pi \colon E \twoheadrightarrow N_\bC, e^{\rho_i} \mapsto v_{\rho_i}$, where $v_{\rho_i}$ generates the ray $\rho_i$, induces the filtrations of $N_\bC$ corresponding to $\CT_{\PP^n}$. On $\ker(\pi) \cong \bC$ the induced filtrations are those of the structure sheaf $\CO_{\PP^n}$. This yields the \emph{Euler sequence}
	\begin{equation}
	0 \to \CO_{\PP^n} \to \oplus_{j = 0}^n \CO_{\PP^n}(\overline{\orb(\rho_j)}) \to \CT_{\PP^n} \to 0.
	\end{equation}
\end{remark}

\subsection{Pullbacks under Toric Morphisms}
\label{subsection:pullbacks}
Let $\CE$ be a toric reflexive sheaf on \mbox{$X \coloneqq \toric(\Sigma, N)$}. 
Similarly to subsection~(\ref{subsubsection:IntersectionNotLatticePolyhedron}), let $\widetilde{N} \subseteq N$ be a sublattice of finite index and $\widetilde{X} \coloneqq \toric(\Sigma, \widetilde{N})$.
Let $v_\rho \in N$ denote the primitive generator of the ray $\rho \in \Sigma(1)$ in $N$ and let $\overline{v_\rho}$ be its image in the quotient group $G \coloneqq N/\widetilde{N}$. For $d_\rho \coloneqq \min \{d \ge 1 \kst d \cdot \overline{v_\rho} = 0 \text{ in } G\}$ the order of $\overline{v_\rho}$ in $G$,
the multiple $\widetilde{v}_\rho \coloneqq d_\rho \cdot v_\rho \in \widetilde{N}$ is the primitive generator of $\rho$ in $\widetilde{N}$. Let $p \colon \widetilde{X} \to X$ be the toric covering morphism.\\
Analogously to \cite[Prop. 4.9]{Payne2006ToricVB} one can give the following description of the filtrations of a pullback of a toric reflexive sheaf on $X$ along $p$.
\begin{proposition}
	\label{prop:pullbackFiltration}
	Suppose the toric reflexive sheaf $\CE$ corresponds to the vector space $E = \CE(1)$ with filtrations $(E^l_\rho)_{l \in \Z}$ for $\rho \in \Sigma(1)$.
	Then the pullback $\CF \coloneqq p^*\CE$ of $\CE$ is a toric reflexive sheaf  on $\widetilde{X}$. It corresponds to the same vector space $F = E$ and the filtration for $\rho \in \Sigma(1)$ is given by 
	$
	F^l_\rho = E^{\lceil \frac{l}{d_\rho} \rceil}_\rho, \, l \in \Z,
	$
	where $\lceil \cdot \rceil$ denotes the ceiling function, giving the smallest integer equal to or larger than the argument. 
\end{proposition}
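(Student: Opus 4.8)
The plan is to reduce the statement to a computation on a single ray, mirroring the treatment of toric vector bundles in \cite[Prop.~4.9]{Payne2006ToricVB}; the heart of the matter is the filtration formula, while the fact that $p^*\CE$ is again a toric reflexive sheaf is a standard feature of finite toric covers (and it is immediate when $\CE$ is locally free, which is the case needed in~\ref{subsubsection:IntersectionNotLatticePolyhedron}). Since $\CE$ and $\CF\coloneqq p^*\CE$ are toric reflexive, each is determined by its restriction to the open subset $\bigcup_{\rho\in\Sigma(1)}U_\rho$ of $X$ (resp.\ $\bigcup_{\rho\in\Sigma(1)}\widetilde{U}_\rho$ of $\widetilde{X}$), whose complement has codimension $\ge 2$; and since $p$ restricts to an \'etale morphism $\widetilde{T}\to T$ of the open tori, the fibre at the unit satisfies $\CF(1)=\CE(1)=E$, so $F=E$. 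It therefore suffices, for a fixed ray $\rho$, to identify the $\bC[\rho^\vee\cap\widetilde{M}]$-module $\Gamma(\widetilde{U}_\rho,\CF)=\Gamma(U_\rho,\CE)\otimes_{\bC[\rho^\vee\cap M]}\bC[\rho^\vee\cap\widetilde{M}]$ (pullback of sheaves along the affine charts $\widetilde{U}_\rho=\Spec\bC[\rho^\vee\cap\widetilde{M}]\to U_\rho=\Spec\bC[\rho^\vee\cap M]$), and then to read off $F^\bullet_\rho$ through Klyachko's rule $e\otimes\chi^{\widetilde{u}}\in\Gamma(\widetilde{U}_\rho,\CF)\Leftrightarrow e\in F^{-\langle\widetilde{u},\widetilde{v}_\rho\rangle}_\rho$.

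For the ray computation I would split off the perpendicular directions. Since $v_\rho$ is primitive in $N$ and $\widetilde{v}_\rho=d_\rho v_\rho$ is primitive in $\widetilde{N}$, choose $u_0\in M$ with $\langle u_0,v_\rho\rangle=1$ and $\widetilde{u}_0\in\widetilde{M}$ with $\langle\widetilde{u}_0,\widetilde{v}_\rho\rangle=1$, i.e.\ $\langle\widetilde{u}_0,v_\rho\rangle=1/d_\rho$; then $M=\bZ u_0\oplus M'$ and $\widetilde{M}=\bZ\widetilde{u}_0\oplus\widetilde{M}'$ with $M'=v_\rho^\perp\cap M\subseteq\widetilde{M}'=v_\rho^\perp\cap\widetilde{M}$, so $\bC[\rho^\vee\cap M]=\bC[M'][\chi^{u_0}]$, $\bC[\rho^\vee\cap\widetilde{M}]=\bC[\widetilde{M}'][\chi^{\widetilde{u}_0}]$, and $\chi^{u_0}$ differs from $\chi^{d_\rho\widetilde{u}_0}$ by a unit of $\bC[\widetilde{M}']$. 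Choose a basis $e_1,\dots,e_r$ of $E$ adapted to the flag $F(\CE)^\bullet_\rho$, with $e_i$ of jump $\ell_i\in\bZ$ (so $E^\ell_\rho=\spann\{e_i\mid\ell_i\ge\ell\}$); Klyachko's rule then exhibits $\Gamma(U_\rho,\CE)=\bigoplus_i\chi^{-\ell_i u_0}\,\bC[\rho^\vee\cap M]\,e_i$ as a \emph{free} $\bC[\rho^\vee\cap M]$-module (as it must be, $U_\rho$ being smooth with trivial Picard group). Base change, together with the unit relation above, gives $\Gamma(\widetilde{U}_\rho,\CF)=\bigoplus_i\chi^{-\ell_i d_\rho\widetilde{u}_0}\,\bC[\rho^\vee\cap\widetilde{M}]\,e_i$, so for $\widetilde{u}\in\widetilde{M}$ with $q\coloneqq\langle\widetilde{u},\widetilde{v}_\rho\rangle\in\bZ$ the element $e_i\otimes\chi^{\widetilde{u}}$ lies in $\Gamma(\widetilde{U}_\rho,\CF)$ if and only if $q\ge-\ell_i d_\rho$, i.e.\ $\ell_i\ge\lceil-q/d_\rho\rceil$, i.e.\ $e_i\in E^{\lceil-q/d_\rho\rceil}_\rho$. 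Comparing with Klyachko's rule yields $F^{-q}_\rho=E^{\lceil-q/d_\rho\rceil}_\rho$ for all $q\in\bZ$, that is, $F^l_\rho=E^{\lceil l/d_\rho\rceil}_\rho$.

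To conclude, for each $\rho$ the assignment $l\mapsto E^{\lceil l/d_\rho\rceil}_\rho$ is a decreasing $\bZ$-filtration of $E$, so by Klyachko's correspondence these filtrations define an equivariant reflexive sheaf on $\widetilde{X}$; by the ray computation it coincides with $\CF$ over every $\widetilde{U}_\rho$, hence over $\bigcup_\rho\widetilde{U}_\rho$, hence (both sheaves being reflexive) everywhere. I expect the only genuinely fiddly point to be the bookkeeping around $M'\subseteq\widetilde{M}'$: one must check that enlarging $\bC[\rho^\vee\cap M]$ to $\bC[\rho^\vee\cap\widetilde{M}]$ only rescales the variable $\chi^{u_0}$ to $\chi^{d_\rho\widetilde{u}_0}$ and leaves the filtration indices intact. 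Recording that $\Gamma(U_\rho,\CE)$ is free is what makes this transparent, reducing the essential content to the degree count $\chi^{-\ell_i u_0}\mapsto\chi^{-\ell_i d_\rho\widetilde{u}_0}$ on free generators, where the appearance of $d_\rho$ and of the ceiling function is immediate. The remaining ingredients --- finiteness of $p$, its \'etaleness over $T$, and preservation of reflexivity under $p^*$ --- are standard, cf.\ \cite[Prop.~4.9]{Payne2006ToricVB}.
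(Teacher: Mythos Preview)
Your proposal is correct and carries out precisely the ray-by-ray computation the paper gestures at: the paper itself gives no proof of this proposition, merely stating that it is obtained ``analogously to \cite[Prop.~4.9]{Payne2006ToricVB}''. Your argument is a faithful and careful unpacking of that analogy, and the only point worth flagging is that freeness of $\Gamma(U_\rho,\CE)$ follows directly from the explicit description $\bigoplus_{u}E_\rho^{-\langle u,v_\rho\rangle}\chi^u$ together with a basis adapted to the single filtration, rather than from general properties of reflexive sheaves on smooth varieties.
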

This filtration $(F_\rho^l)_{l \in \bZ}$ of $F = E$ can be thought of as the filtration $(E^l_\rho)_{l \in \Z}$ stretched by the factor $d_\rho$ and we will refer to it as the \emph{the $d_\rho$-th stretching of the filtration}.

\subsection{Constructing the Universal Extension using Klyachko's Description}
\label{subsection:constructionKlyachko}
We are in the setting of subsection~(\ref{subsubsection:IntersectionNotLatticePolyhedron}{)}.
Consider the two line bundles $\CO_X(\Delta^+)$ and $\CO_X(\Delta^-)$ on $X$. 
In Klyachko's description, let them be given by the $\bC$-vector spaces with decreasing $\Z$-filtrations $(E_+, E_{+, \rho}^\kbb)_{\rho \in \Sigma(1)}$ and $(E_-, E_{-, \rho}^\kbb)_{\rho \in \Sigma(1)}$, respectively. 
By Proposition~\ref{prop:pullbackFiltration} the pullback line bundles \mbox{$\CO_{\widetilde{X}}(\Delta^+) = p^*\CO_X(\Delta^+)$} and \mbox{$\CO_{\widetilde{X}}(\Delta^-) = p^*\CO_X(\Delta^-)$} on $\widetilde{X}$ correspond to the $\bC$-vector spaces and decreasing $\Z$-filtrations $(F_+, F_{+, \rho}^\kbb)_{\rho \in \Sigma(1)}$ and $(F_-, F_{-, \rho}^\kbb)_{\rho \in \Sigma(1)}$ with
$
F_{\pm} = E_{\pm}$ and $F_{\pm,\rho}^l = E_{\pm, \rho}^{\lceil \frac{l}{d_\rho} \rceil}.
$
The filtration $F_\rho^\kbb$ for a ray $\rho$ corresponding to a pullback sheaf $\CF = p^*\CE$ of $\CE$ on $X$ is a \emph{$d_\rho$-th stretching} in the sense that a proper inclusion can only occur every $d_\rho$ steps in the filtration:
\begin{equation}
\dots = \underbrace{F_\rho^{d_\rho k}}_{=E_\rho^{k}} \supseteq \underbrace{F_\rho^{d_\rho k + 1}}_{=E_\rho^{k+1}} = \dots = \underbrace{F_\rho^{d_\rho (k+1)}}_{=E_\rho^{k+1}} \supseteq \underbrace{F_\rho^{d_\rho (k+1) + 1}}_{=E_\rho^{k+2}} = \dots
\end{equation}
In sequence~(\ref{equation:extensionOnXTilde})
the outer two sheaves are the pullbacks of line bundles on $X$.
\begin{lemma}
	\label{lemma:middleSheafReflexiveandPower}
	The sheaf $\widetilde{\CH}$ in sequence~(\ref{equation:extensionOnXTilde}) is a reflexive sheaf on $\widetilde{X}$ and corresponds to a $\bC$-vector space with filtrations ${(\widetilde{H}, {\widetilde{H}_\rho}^\kbb)}_{\rho \in \Sigma(1)}$, where the filtration ${\widetilde{H}_\rho}^\kbb$ for the ray $\rho \in \Sigma(1)$ is a $d_\rho$-th stretching in the sense introduced above.
\end{lemma}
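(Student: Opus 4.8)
The plan is to first verify that $\widetilde{\CH}$ is reflexive and carries a natural $\widetilde{T}$-equivariant structure, so that Klyachko's correspondence applies to it, and then to read off its filtrations ray by ray from the defining short exact sequence~(\ref{equation:extensionOnXTilde}).

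First I would settle reflexivity. Since $\Delta^-$ is compatible with $\Sigma$, the sheaf $\CO_{\widetilde{X}}(\Delta^-) = p^*\CO_X(\Delta^-)$ is a line bundle on $\widetilde{X}$, in particular locally free; locally on $\widetilde{X}$ it is free, hence projective, so sequence~(\ref{equation:extensionOnXTilde}) splits locally. Therefore $\widetilde{\CH}$ is locally free of rank $n+1$, in particular reflexive, and passing to fibres at $1$ the sequence $0 \to E_+^n \to \widetilde{H} \to E_- \to 0$ shows $\dim_\bC \widetilde{H} = n+1$ for $\widetilde{H} \coloneqq \widetilde{\CH}(1)$. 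Moreover $\widetilde{\CH}$ is obtained as a pushout of $\widetilde{T}$-equivariant sheaves along $\widetilde{T}$-equivariant morphisms, hence it inherits a $\widetilde{T}$-equivariant structure and sits in sequence~(\ref{equation:extensionOnXTilde}) with equivariant maps. Consequently $\widetilde{\CH}$ corresponds under Klyachko's description to $\widetilde{H}$ together with decreasing $\bZ$-filtrations $\widetilde{H}_\rho^\kbb$, $\rho \in \Sigma(1)$.

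Next I would compute these filtrations. Fix $\rho \in \Sigma(1)$; its primitive generator in $\widetilde{N}$ is $\widetilde{v}_\rho = d_\rho v_\rho$. Restricting~(\ref{equation:extensionOnXTilde}) to the affine open $U_\rho = \toric(\rho) \subseteq \widetilde{X}$ and taking global sections yields an exact sequence of $\widetilde{M}$-graded modules in which all maps are homogeneous of degree $0$. Taking the degree-$u$ part for $u \in \widetilde{M}$ and using the Klyachko identification $\Gamma(U_\rho,\CE)_u = \CE(1)_\rho^{-\langle u,\widetilde{v}_\rho\rangle}\cdot\chi^u$, valid for any $\widetilde{T}$-equivariant reflexive sheaf $\CE$, we obtain with $\ell \coloneqq -\langle u,\widetilde{v}_\rho\rangle$ a short exact sequence
\begin{equation*}
0 \to \big(F_{+,\rho}^\ell\big)^n \to \widetilde{H}_\rho^\ell \to F_{-,\rho}^\ell \to 0,
\end{equation*}
where $F_{\pm,\rho}^\kbb$ are the filtrations of $\CO_{\widetilde{X}}(\Delta^\pm)$. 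As $u$ runs over $\widetilde{M}$, $\ell$ runs over all of $\bZ$ since $\widetilde{v}_\rho$ is primitive in $\widetilde{N}$.

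Finally I would conclude the stretching property. By Proposition~\ref{prop:pullbackFiltration} we have $F_{\pm,\rho}^\ell = E_{\pm,\rho}^{\lceil \ell/d_\rho\rceil}$, so each $F_{\pm,\rho}^\kbb$ is constant on every block $\{d_\rho(k-1)+1,\dots,d_\rho k\}$. For indices $\ell \le \ell'$ in one such block, comparing the short exact sequences above for $\ell$ and for $\ell'$ gives a commutative ladder with exact rows whose left and right vertical maps (the filtration inclusions for $F_\pm$) are identities; the five lemma then forces the middle map $\widetilde{H}_\rho^{\ell'} \hookrightarrow \widetilde{H}_\rho^{\ell}$ to be an isomorphism. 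Hence $\widetilde{H}_\rho^\kbb$ is constant on every block of length $d_\rho$, that is, a $d_\rho$-th stretching, as claimed. I expect the only genuinely delicate point to be the application of Klyachko's dictionary to $\widetilde{\CH}$ itself — i.e.\ checking that $\widetilde{\CH}$ is reflexive and $\widetilde{T}$-equivariant, so that its $u$-graded sections over $U_\rho$ really are the $(-\langle u,\widetilde{v}_\rho\rangle)$-th step of a well-defined filtration of $\widetilde{H}$; once that is in place, the remaining arguments are routine diagram chases.
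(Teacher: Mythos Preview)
Your proposal is correct and follows essentially the same strategy as the paper for the stretching claim: both restrict the equivariant short exact sequence~(\ref{equation:extensionOnXTilde}) to the rays, obtain exact sequences of filtrations $0 \to (F_{+,\rho}^\ell)^n \to \widetilde{H}_\rho^\ell \to F_{-,\rho}^\ell \to 0$, and conclude that the middle filtration is a $d_\rho$-th stretching because the outer two are. Your five-lemma ladder argument makes explicit what the paper abbreviates as ``$H_\rho^\kbb$ is determined by $F_{+,\rho}^\kbb$ and $F_{-,\rho}^\kbb$''.

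The one genuine methodological difference is in the reflexivity step. The paper applies $\sheafhom(-,\CO_{\widetilde X})$ twice to~(\ref{equation:extensionOnXTilde}), obtains a short exact sequence of double duals (using that the outer terms are locally free, so their $\sheafext^1$ vanish), and then the five lemma forces $\widetilde{\CH} \to \widetilde{\CH}^{\vee\vee}$ to be an isomorphism. You instead observe that $\CO_{\widetilde X}(\Delta^-)$ is locally free, hence locally projective, so the sequence splits locally and $\widetilde{\CH}$ is itself locally free of rank $n+1$. Your route is more elementary and in fact yields a stronger conclusion (local freeness rather than mere reflexivity); the paper's double-dual argument, on the other hand, would still go through if one only knew the outer terms to be reflexive with suitable $\sheafext$-vanishing. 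Either is perfectly adequate here.
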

\begin{proof}
	Applying the contravariant functor $\sheafhom(-, \CO_{\widetilde{X}}) \colon \Coh(\widetilde{X}) \rightarrow \Coh(\widetilde{X})$ to the sequence~(\ref{equation:extensionOnXTilde}) twice yields a short exact sequence of double duals  (using that the outer two sheaves are locally free) with a canonical homomorphism from the original sequence. Reflexivity of the outer two sheaves and the five lemma give reflexivity of the sheaf $\widetilde{\CH}$.
	Let ${\widetilde{H}_\rho}^\kbb$, $F_{+, \rho}^\kbb$ and $F_{-, \rho}^\kbb$, $\rho \in \Sigma(1)$, denote the filtrations corresponding to the reflexive sheaves $\widetilde{\CH}$, $\CO_{\widetilde{X}}(\Delta^+)$ and $\CO_{\widetilde{X}}(\Delta^-)$, respectively.
	Since sequence~(\ref{equation:extensionOnXTilde}) is $T$-equivariant, it induces a short exact sequence of filtrations $0 \to F_{+, \rho}^\kbb \to {\widetilde{H}_\rho}^\kbb \to F_{-, \rho}^\kbb \to 0$ for each $\rho \in \Sigma(1)$. 
	Hence, the filtration $H_\rho^\kbb$ of $H$ is determined by the filtrations $F_{+,\rho}^\kbb$ and $F_{-, \rho}^\kbb$ and is thus also $d_\rho$-th stretching.
\end{proof}
\begin{remark}
	A filtration of a vector space that is a $d$-th stretching can also be \emph{squished} back: For a vector space $F$ with filtration
	\begin{equation}
	\dots = F^{d k} \supseteq F^{d k + 1}= \dots = F^{d (k+1)} \supseteq F^{d (k+1) + 1} = \dots
	\end{equation}
	the \emph{$d$-th squishing} of $(F, F^\kbb)$ is the vector space $F$ with filtration
	\begin{equation}
	\dots \supseteq F^{d (k-1)} \supseteq F^{d k} \supseteq F^{d(k+1)} \supseteq \dots
	\end{equation}
\end{remark}
\begin{corollary}
	\label{cor:rootSheaf}
	The reflexive sheaf $\CH$ on $X$ corresponding to $(H, H_\rho^\kbb)$, where $H  \coloneqq \widetilde{H}$ and the filtration $H_\rho^\kbb$ given by 
	$
	H_\rho^l \coloneqq {\widetilde{H}_\rho}^{d_\rho l} \big(= {\widetilde{H}_\rho}^{d_\rho l -1} = \dots = {\widetilde{H}_\rho}^{d_\rho l - (d_\rho -1)}\big), l \in \Z,
	$
	is the $d_\rho$-th squishing of ${\widetilde{H}_\rho}^\kbb$, $\rho \in \Sigma(1)$, fits into a short exact sequence
	\begin{equation}\label{equation:squishedSequence}
	0 \to \CO_X(\Delta^+)^n \to \CH \to \CO_X(\Delta^-) \to 0
	\end{equation}
	that pulls back to the short exact sequence~(\ref{equation:extensionOnXTilde}) under $p \colon \widetilde{X} \to X$.\\
	Sequence~(\ref{equation:squishedSequence}) is precisely the universal extension sequence~(\ref{equation:extensionSequenceGeneralCase}).
\end{corollary}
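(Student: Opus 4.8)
The plan is to squish the Klyachko data of the equivariant sequence~(\ref{equation:extensionOnXTilde}) on $\widetilde{X}$ down to $X$, to identify the resulting reflexive sheaf with the sheaf $\CH$ of subsection~(\ref{subsubsection:IntersectionNotLatticePolyhedron}), and then to invoke the universality already established there. First, by Lemma~\ref{lemma:middleSheafReflexiveandPower} the sheaf $\widetilde{\CH}$ is reflexive and each filtration ${\widetilde{H}_\rho}^\kbb$ is a $d_\rho$-th stretching, so the $d_\rho$-th squishing $H_\rho^l\coloneqq{\widetilde{H}_\rho}^{d_\rho l}$ is well defined and, by Klyachko's description, $(H,H_\rho^\kbb)$ is the data of a toric reflexive sheaf $\CH$ on $X$. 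Applying Proposition~\ref{prop:pullbackFiltration} to $\CH$ shows that $p^*\CH$ carries the filtration $l\mapsto H_\rho^{\lceil l/d_\rho\rceil}={\widetilde{H}_\rho}^{d_\rho\lceil l/d_\rho\rceil}$ at the ray $\rho$; since ${\widetilde{H}_\rho}^\kbb$ only jumps at multiples of $d_\rho$, this equals ${\widetilde{H}_\rho}^l$, so $p^*\CH\cong\widetilde{\CH}$, i.e. stretching inverts squishing on $d_\rho$-stretched filtrations. I will moreover note that this $\CH$ agrees with the sheaf of the same name in Proposition~\ref{prop:subPushforwardSequence}: over a chart $U_\rho$ and for $m\in M$ the degree-$m$ part of $p_*\widetilde{\CH}$ is ${\widetilde{H}_\rho}^{-\langle m,\widetilde{v}_\rho\rangle}\otimes\chi^m=H_\rho^{-\langle m,v_\rho\rangle}\otimes\chi^m$ because $\widetilde{v}_\rho=d_\rho v_\rho$, so the $M$-graded (equivalently $G$-invariant) subsheaf of $p_*\widetilde{\CH}$ and the Klyachko sheaf $\CH$ have the same sections on the big open $\bigcup_{\rho}U_\rho$ and, both being reflexive, coincide.

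Next, being $T$-equivariant with locally free outer terms, the sequence~(\ref{equation:extensionOnXTilde}) corresponds over each chart to a short exact sequence of $\Z$-filtered vector spaces $0\to(E_+^{\oplus n},(F_{+,\rho}^\kbb)^{\oplus n})\to(\widetilde{H},{\widetilde{H}_\rho}^\kbb)\to(E_-,F_{-,\rho}^\kbb)\to0$ that is exact in every level, and all three filtrations are $d_\rho$-th stretchings by Proposition~\ref{prop:pullbackFiltration} (outer terms) and Lemma~\ref{lemma:middleSheafReflexiveandPower} (middle term). Squishing is exact on such data --- it merely reindexes, the level $l$ of the squished filtration at $\rho$ being the level $d_\rho l$ of the original --- so one obtains a short exact sequence $0\to(E_+^{\oplus n},(E_{+,\rho}^\kbb)^{\oplus n})\to(H,H_\rho^\kbb)\to(E_-,E_{-,\rho}^\kbb)\to0$ over $X$, exact in every level, hence $\CO_X$-module maps $\CO_X(\Delta^+)^n\to\CH\to\CO_X(\Delta^-)$. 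By the identification of $\CH$ with the Proposition~\ref{prop:subPushforwardSequence}-sheaf this complex is precisely~(\ref{equation:extensionSequenceGeneralCase}); in particular it is the claimed short exact sequence. Finally, pulling~(\ref{equation:squishedSequence}) back along $p$ gives, by the functorial form of Proposition~\ref{prop:pullbackFiltration} (the morphism analogue of~\cite[Prop. 4.9]{Payne2006ToricVB}), exactly the stretch of the squished filtration sequence, namely the original filtration sequence on $\widetilde{X}$ with unchanged underlying linear maps; thus $p^*$ of~(\ref{equation:squishedSequence}) is~(\ref{equation:extensionOnXTilde}).

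For the last assertion, the two steps above show that $\CH$ and the sequence~(\ref{equation:squishedSequence}) coincide with the sheaf and the sequence~(\ref{equation:extensionSequenceGeneralCase}) of Proposition~\ref{prop:subPushforwardSequence}, which Corollary~\ref{cor:pushoutSingleExtension} identifies as a universal extension for $\gExt\big(\CO_X(\Delta^-),\CO_X(\Delta^+)\big)_0$; this is exactly what is to be proved. (Alternatively: since $\CO_X(\Delta^\pm)$ is a direct summand of $p_*p^*\CO_X(\Delta^\pm)$ and $p$ is affine, the map $p^*\colon\gExt\big(\CO_X(\Delta^-),\CO_X(\Delta^+)^n\big)_0\to\gExt\big(\CO_{\widetilde{X}}(\Delta^-),\CO_{\widetilde{X}}(\Delta^+)^n\big)_0$ is injective, and both~(\ref{equation:squishedSequence}) and~(\ref{equation:extensionSequenceGeneralCase}) pull back to~(\ref{equation:extensionOnXTilde}), hence agree.)

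The main obstacle is the bookkeeping that stretching and squishing are mutually inverse on $d_\rho$-stretched filtrations, together with upgrading Proposition~\ref{prop:pullbackFiltration} to a statement about morphisms, so that a whole short exact sequence --- and not merely a sheaf --- passes through the pullback $p^*$. Identifying $\CH$ with the reflexive sheaf of Proposition~\ref{prop:subPushforwardSequence} is what lets us import exactness of~(\ref{equation:squishedSequence}) and its universality without redoing those arguments.
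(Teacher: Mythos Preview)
Your argument is correct and supplies exactly the details the paper leaves implicit: the corollary is stated there without proof, relying on Lemma~\ref{lemma:middleSheafReflexiveandPower} (the $\widetilde{\CH}$-filtrations are $d_\rho$-stretched), the squish/stretch inverse relationship via Proposition~\ref{prop:pullbackFiltration}, and the identification with the sheaf $\CH$ of subsection~(\ref{subsubsection:IntersectionNotLatticePolyhedron}). Your explicit matching of the Klyachko sheaf with the $M$-graded subsheaf of $p_*\widetilde{\CH}$ on $\bigcup_\rho U_\rho$ (plus reflexivity) and your observation that squishing merely reindexes levels---hence preserves exactness---are precisely the intended, unwritten steps.
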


\begin{example}
	We continue with the example introduced in subsection~(\ref{subsubsection:introducingTwoProblems}) using Klyachko's language.
	Let $M = \bZ \oplus \bZ$, $N = \bZ \oplus \bZ$ and 
	recall the smooth fan $\Sigma$ in $N_\bR \cong \bR^2$ and the lattice polytopes $\Delta^+$ and $\Delta^-$ with respect to $M = \bZ \oplus \bZ$:
	$$
	\newcommand{\scaleA}{0.5}
	\newcommand{\spaceA}{\hspace*{5em}}
	\begin{tikzpicture}[scale=\scaleA]
	\draw[color=oliwkowy!40] (-2.3,-2.3) grid (2.3,2.3);
	\draw[thick, color=red]
	(0,0) circle (3pt);
	\draw[thick,  color=black, ->]
	(0,0) -- (2.3,0);
	\draw[thick,  color=black, ->]
	(0,0) -- (0,-2.3);
	\draw[thick,  color=black, ->]
	(0,0) -- (0,2.3);
	\draw[thick,  color=black, ->]
	(0,0) -- (-2.3,-1.15);
	\draw[thick,  color=blue, ->]
	(0,0) -- (-2.3,0);
	\draw[thick,  color=blue, ->]
	(0,0) -- (-2.3,-2.3);
	\fill[pattern color=black!30, pattern=north east lines]
	(0,0) -- (2,0) -- (2,-2) -- (0, -2);
	\fill[pattern color=black!30, pattern=north west lines]
	(0,0) -- (2,0) -- (2, 2) -- (0,2);
	\fill[pattern color=black!30, pattern=north east lines]
	(0,0) -- (-2,0) -- (-2, 2) -- (0, 2);
	\fill[pattern color=black!30, pattern=north west lines]
	(0,0) -- (-2,0) -- (-2, -1) -- (0, 0);
	\fill[pattern color=black!30, pattern=north west lines]
	(0,0) -- (-2, -1) -- (-2,-2) -- (0, 0);
	\fill[pattern color=black!30, pattern=north west lines]
	(0,0) -- (-2, -2) -- (0,-2) -- (0, 0);
	\fill[thick,  color=black]
	(1,0) circle (2pt) (0,1) circle (2pt) (-2,-1) circle (2pt) 
	(0,-1) circle (2pt);
	\fill[thick,  color=blue]
	(-1,0) circle (2pt) (-1,-1) circle (2pt);
	\draw[thick,  color=black]
	(2,-0.5) node{$\rho_1$};
	\draw[thick,  color=black]
	(-0.6,2) node{$\rho_2$};
	\draw[thick,  color=black]
	(-2,0.5) node{$\rho_3$};
	\draw[thick,  color=black]
	(-2,-0.65) node{$\rho_4$};
	\draw[thick,  color=black]
	(-2,-1.65) node{$\rho_5$};
	\draw[thick,  color=black]
	(-0.6,-2) node{$\rho_6$};
	\end{tikzpicture}
	\spaceA
	\begin{tikzpicture}[scale=0.8]
	\draw[color=oliwkowy!40] (-0.3,-0.3) grid (1.3,0.3);
	\draw[thick, color=black]
	(0,0) -- (1,0);
	\fill[thick, color=black]
	(0,0) circle (2pt) (1,0) circle (2pt);
	\draw[thick, color=red]
	(0,0) circle (3pt);
	\draw[thick,  color=black]
	(0.5,-0.7) node{$\Delta^+$};
	\end{tikzpicture}
	\spaceA
	\begin{tikzpicture}[scale=0.8]
	\draw[color=oliwkowy!40] (-0.3,-0.3) grid (1.3,2.3);
	\draw[thick, color=black]
	(0,0) -- (1,0) -- (0,2) -- (0,0);
	\fill[pattern color=yellow!50, pattern=north west lines]
	(0,0) -- (1,0) -- (0,2) -- (0,0);
	\fill[thick, color=black]
	(0,0) circle (2pt) (1,0) circle (2pt) (0,1) circle (2pt) (0, 2) circle (2pt);
	\draw[thick, color=red]
	(0,1) circle (3pt);
	\draw[thick,  color=black]
	(0.5,-0.7) node{$\Delta^-$};
	\end{tikzpicture}
	$$
	with $\Delta^+$ not contained in $\Delta^-$.
	The intersection $\Delta^+ \cap \Delta^-$ is not a lattice polytope with respect to $\bZ \oplus \bZ$. Consider the sublattice $\widetilde{N} = 2\bZ \oplus \bZ \hookrightarrow \bZ \oplus \bZ = N$ and dually $M = \bZ \oplus \bZ \hookrightarrow (\frac{1}{2}\bZ) \oplus \bZ = \widetilde{M}$, so that $\Delta^+ \cap \Delta^-$ is a lattice polytope with respect to $\widetilde{M}$. 
	Denote the unions of the components of $\Delta^- \setminus (\Delta^+ \cap \Delta^-)$ with $\Delta^+ \cap \Delta^-$ by 
	$$
	\nabla_0 \coloneqq 
	\newcommand{\scaleA}{0.8}
	\newcommand{\spaceA}{\hspace*{4em}}
	\raisebox{-1em}{\begin{tikzpicture}[scale=\scaleA]
	\draw[color=oliwkowy!40] (-0.3,0.7) grid (1.3,2.3);
	\draw[color=oliwkowy!40] (0.5,2.3) -- (0.5, 0.7);
	\draw[thick, color=black]
	(0,1) -- (0.5,1) -- (0,2) -- (0,1);
	\fill[pattern color=darkgreen!50, pattern=north west lines]
	(0,1) -- (0.5,1) -- (0,2) -- (0,1);
	\fill[thick, color=black]
	(0,1) circle (2pt) (0.5,1) circle (2pt)
	(0,2) circle (2pt);
	\draw[thick, color=red]
	(0,1) circle (3pt);
	\end{tikzpicture}}
	\text{ and }
	\nabla_1 \coloneqq 
	\raisebox{-1em}{\begin{tikzpicture}[scale=\scaleA]
	\draw[color=oliwkowy!40] (-0.3,-0.3) grid (1.3,1.3);
		\draw[color=oliwkowy!40] (0.5,1.3) -- (0.5, -0.3);
	\draw[thick, color=black]
	(0,0) -- (1,0) -- (0.5,1) -- (0,1) -- (0,0);
	\fill[pattern color=yellow!50, pattern=north west lines]
	(0,0) -- (1,0) -- (0.5,1) -- (0,1) -- (0,0);
	\fill[thick, color=black]
	(0,0) circle (2pt) (1,0) circle (2pt) (0,1) circle (2pt)
	(0.5,1) circle (2pt);
	\draw[thick, color=red]
	(0,1) circle (3pt);
	\end{tikzpicture}}.$$
	Set $\widetilde{X} \coloneqq \toric(\Sigma, \widetilde{N})$ and denote the covering morphism by $p \colon \widetilde{X} \to X$.
	The minimal ray generators $\widetilde{v}_{\rho_i}$, $i = 1, \dots, 6$, in the lattice $\widetilde{N}$ are
	$\widetilde{v}_{\rho_1} = (2,0)$, $\widetilde{v}_{\rho_2} = (0,1)$, $\widetilde{v}_{\rho_3} = (-2,0)$, $\widetilde{v}_{\rho_4} = (-2,-1)$, $\widetilde{v}_{\rho_5} = (-2,-2)$, and $\widetilde{v}_{\rho_6} = (0,-1)$.
	From these we determine the filtrations of the divisors $\widetilde{D}_{\Delta^+ \cap \Delta^-} $, $\widetilde{D}_{\Delta^+}$, $\widetilde{D}_{\Delta^-}$, $\widetilde{D}_{\nabla_0}$ and $\widetilde{D}_{\nabla_1}$ on $\widetilde{X}$.
	Taking pushouts allows us to calculate the filtrations for $\widetilde{\CH}$ on $\widetilde{X}$.
	In order to obtain the filtrations for the sheaf $\CH$ we need to take the $d_\rho$-squishings of the filtrations for $\widetilde{\CH}$, whenever $ d_\rho \ne 1$. This is the case for $\rho_1, \rho_3$ and $\rho_5$, with $d_\rho = 2$ in each case. 
	The following table depicts the resulting filtrations for $\CH$.
	\begin{center}
		\begin{tabular}{| C{0.6cm} |c c c c c c c c c | }
			\thickhline
			&&&&&&&&& \\ [-2ex]
			$\CH$ && \color{gray} 0 && \color{gray} 1 && \color{gray} 2 && \color{gray} 3 & \\
			\thickhline
			$\rho_1$ & $\supseteq$ & $(\bC_+ \oplus \bC_0 \oplus \bC_1)/\bC$ & $\supseteq$ & $0$ & $\supseteq$ & $0$ & $\supseteq$ & $0$ & $\supseteq$ \\
			\hline
			$\rho_2$ & $\supseteq$ & $(\bC_+ \oplus \bC_0 \oplus \bC_1)/\bC$ & $\supseteq$& $\bC_1$ & $\supseteq$ & $0$ & $\supseteq$ & $0$& $\supseteq$  \\
			\hline
			$\rho_3$ & $\supseteq$ & $(\bC_+ \oplus \bC_0 \oplus \bC_1)/\bC$ & $\supseteq$ & $(\bC_+ \oplus \bC_0 \oplus \bC_1)/\bC$ & $\supseteq$ & $0$ & $\supseteq$ & $0$ & $\supseteq$ \\
			\hline
			$\rho_4$ & $\supseteq$ & $(\bC_+ \oplus \bC_0 \oplus \bC_1)/\bC$ & $\supseteq$ & $(\bC_+ \oplus \bC_0 \oplus \bC_1)/\bC$ & $\supseteq$ & $\bC_+$ & $\supseteq$ & $0$ & $\supseteq$ \\
			\hline
			$\rho_5$ & $\supseteq$ & $(\bC_+ \oplus \bC_0 \oplus \bC_1)/\bC$ & $\supseteq$ & $(\bC_+ \oplus \bC_0 \oplus \bC_1)/\bC$ & $\supseteq$ & $0$ & $\supseteq$ & $0$ & $\supseteq$ \\
			\hline
			$\rho_6$ & $\supseteq$ & $(\bC_+ \oplus \bC_0 \oplus \bC_1)/\bC$ & $\supseteq$ & $\bC_0$ & $\supseteq$ & $0$ & $\supseteq$ & $0$ & $\supseteq$ \\
			\hline
		\end{tabular}
	\end{center}
	Note that the vector bundle $\CH$ (and even the vector bundle $\widetilde{\CH}$ on $\widetilde{X}$) does not split. This can be seen using a criterion from Klyachko, that a vector bundle splits if and only if the vector spaces in the filtrations of all the rays form a distributive lattice, or, equivalently are given by coordinate subspaces (compare \cite[Cor. 2.2.3]{klyachkoOld}).
\end{example}

\bibliographystyle{alpha}
\bibliography{dispExt}

\begin{thebibliography}{ABKW20}

\bibitem[ABKW20]{immaculate}
Klaus {Altmann}, Jaros{\l}aw {Buczy\'{n}ski}, Lars {Kastner}, and Anna-Lena
  {Winz}.
\newblock Immaculate line bundles on toric varieties.
\newblock {\em Pure and Applied Mathematics Quarterly}, 16(4):1147--1217, 2020.

\bibitem[AP20]{dop}
Klaus {Altmann} and David {Ploog}.
\newblock {Displaying the cohomology of toric line bundles}.
\newblock {\em {Izvestiya: Mathematics}}, 84(4):683--693, 2020.

\bibitem[CLS11]{CoxBook}
David~A. {Cox}, John~B. {Little}, and Henry~K. {Schenck}.
\newblock {\em {Toric varieties.}}
\newblock Providence, RI: American Mathematical Society (AMS), 2011.

\bibitem[{Dan}78]{Danilov_1978}
Vladimir~I. {Danilov}.
\newblock {The} {Geometry} {of} {Toric} {Varieties}.
\newblock {\em Russian Mathematical Surveys}, 33(2):97--154, 1978.

\bibitem[{Dem}70]{Demazure1970}
Michel {Demazure}.
\newblock Sous-groupes alg\'ebriques de rang maximum du groupe de cremona.
\newblock {\em Annales scientifiques de l'\'Ecole Normale Sup\'erieure}, 4e
  s{\'e}rie, 3(4):507--588, 1970.

\bibitem[DJS14]{parliaments}
Sandra {Di Rocco}, Kelly {Jabbusch}, and Gregory {Smith}.
\newblock Toric vector bundles and parliaments of polytopes.
\newblock {\em Transactions of the American Mathematical Society}, 370, 09
  2014.

\bibitem[{Ful}93]{fultonToric}
William {Fulton}.
\newblock {\em {Introduction to toric varieties. The 1989 William H. Roever
  lectures in geometry.}}
\newblock Princeton, NJ: Princeton University Press, 1993.

\bibitem[{Kly}89]{klyachkoOld}
Alexander~A. {Klyachko}.
\newblock {Toric bundles and problems of linear algebra.}
\newblock {\em {Functional Analysis and its Applications}}, 23(2):135--137,
  1989.

\bibitem[{Kly}90]{klyachko}
Alexander~A. {Klyachko}.
\newblock {Equivariant bundles on toral varieties.}
\newblock {\em {Mathematics of the USSR. Izvestija}}, 35(2):337--375, 1990.

\bibitem[{Kly}02]{klyachkoICM}
Alexander~A. {Klyachko}.
\newblock {Vector bundles, linear representations, and spectral problems.}
\newblock In {\em {Proceedings of the international congress of mathematicians,
  ICM 2002, Beijing, China, August 20--28, 2002. Vol. II: Invited lectures}},
  pages 599--613. Beijing: Higher Education Press; Singapore: World
  Scientific/distributor, 2002.

\bibitem[KM]{tropical}
Kiumars {Kaveh} and Christopher {Manon}.
\newblock {Toric bundles, valuations, and tropical geometry over semifield of
  piecewise linear functions}.
\newblock {arXiv:1907.00543 [math.AG]}.

\bibitem[{Pay}06]{Payne2006ToricVB}
Sam {Payne}.
\newblock Toric vector bundles, branched covers of fans, and the resolution
  property.
\newblock {\em Journal of Algebraic Geometry}, 18:1--36, 2006.

\bibitem[{Pay}08]{payne}
Sam {Payne}.
\newblock {Moduli of toric vector bundles.}
\newblock {\em {Compositio Mathematica}}, 144(5):1199--1213, 2008.

\bibitem[{Wei}95]{weibel1995introduction}
Charles~A. {Weibel}.
\newblock {\em An Introduction to Homological Algebra}.
\newblock Cambridge Studies in Advanced Mathematics. Cambridge University
  Press, 1995.

\end{thebibliography}

\end{document}